\def\pd#1#2{\frac{\partial#1}{\partial#2}}
\theoremstyle{plain}
\newtheorem*{lemma*}{Lemma}
\newtheorem{lemma}[subsection]{Lemma}
\newtheorem*{theorem*}{Theorem}
\newtheorem{theorem}[subsection]{Theorem}
\newtheorem*{proposition*}{Proposition}
\newtheorem{proposition}[subsection]{Proposition}
\newtheorem*{corollary*}{Corollary}
\newtheorem{corollary}[subsection]{Corollary}
\theoremstyle{definition}
\newtheorem*{definition*}{Definition}
\newtheorem{definition}[subsection]{Definition}
\newtheorem*{example*}{Example}
\newtheorem{example}[subsection]{Example}
\theoremstyle{remark}
\newtheorem*{remark*}{Remark}
\newtheorem{remark}[subsection]{Remark}
 \def\infdex{\text {infdex}}
\def\infdex {\text {{\rm infdex}}}
\newcommand{\R}{{\mathbb R}}
\newcommand{\Z}{{\mathbb Z}}
\newcommand{\g}{\mathfrak{g}}
\renewcommand{\l}{\mathfrak{l}}
\title[  The infinitesimal Index ]{The infinitesimal Index}
\author{
C. De Concini,\quad C. Procesi,\quad M. Vergne}
 \thanks{The first two authors are partially supported by the Cofin 40 \%, MIUR}
\begin{document}

\begin{abstract}

In this note, we study an  invariant associated to the zeroes
of the moment map  generated by an action form, the {\em
infinitesimal index}.
This construction  will be used  to study the compactly supported equivariant cohomology of the zeroes of the moment map and to give formulas for
 the multiplicity index map of a transversally elliptic operator.
   \end{abstract}\maketitle

\section*{Introduction}
Let $G$ be a compact Lie group acting on a manifold $N$. Then $G$
acts on the cotangent bundle $M=T^*N$ in a Hamiltonian way. Set $\mathfrak g:=T_1G$ its Lie algebra. The
set $M^0$ of zeroes of the moment map $\mu: M\to \mathfrak g^*$ is
the union of the conormals to the $G$-orbits in $N$. An  element
$S$ of the equivariant $K$ theory $K_G(M^0)$ of $M^0$  is called a
transversally elliptic symbol, and Atiyah-Singer (see \cite{At})
associated to $S$ a trace class representation ${\rm index}(S)$ of
$G$. If $\hat G$ is the dual of $G$, the representation ${\rm
index}(S)$ gives rise to a function $m(\tau)$ on $\hat G$: ${\rm
index}(S)=\sum_{\tau\in \hat G} m(\tau) \tau$ called the
multiplicity index map.
% For example, if $M=T^*G$ is the cotangent
%bundle to $G$, then the zero operator is transversally elliptic
%and its index is the regular representation $L^2(G)$.

The analog of the equivariant $K$-theory of $M^0$ is the
equivariant cohomology with compact supports $H^*_{G,c}(M^0)$. Here
we construct a map $\infdex_G^{\mu}$, called the infinitesimal
index, associating to an element $[\alpha]\in H_{G,c}^*(M^0)$ an
invariant distribution on $\mathfrak g^*$. We prove a certain
number of functorial properties of this map, mimicking  the
properties of the  index map  formalized by Atiyah-Segal-Singer.
 However, although our proofs are similar to
 \cite{BV2}, \cite{BV1}, \cite{pep2},  \cite{par-ver2}, our point of view is dual.
  Indeed in  previous works, the  equivariant index, or integrals of equivariant cohomology classes,
  are (generalized) functions on $G$, or $\g$, while we work directly on the dual space $\g^*$.

%An example of our infdex map is as follows: If $M=T^*G$, with $G$
%a torus, then $M^0=G\subset T^*G$. The constant $1$ is in
%$H_{G,c}^*(M^0)$ and $\infdex^{\mu}(1)$ is just the Lebesgue
%measure on $\mathfrak g^*$.

 More generally, we consider the case when $M$ is a $G$-manifold provided with a $G$-invariant one
form $\sigma$ (and we do not assume that $d\sigma$ is non
degenerate). This allows us to obtain a map $\infdex_G^{\mu}: H^*_
{G,c}(M^0)\to \mathcal D'(\g^*)^G$, where $M^0$ is the set of
zeroes of the associated moment map $\mu: M\to \mathfrak g^*$ and
$\mathcal D'(\g^*)^G$ the space of $G$-invariant distributions on $\g^*$. Our construction is  strongly related to Paradan's localization  on $M^0$   of the equivariant cohomology of $M$ (see \cite{VergneICM}).

\bigskip

\subsection*{Outline of the article}

Let us summarize the content of this article.

In the first section, we give a  ``de Rham" definition of the equivariant
cohomology with compact supports $H_{G,c}^*(Z)$ of a topological
space $Z$ which is a closed invariant subspace of a $G$-manifold $M$: a representative of
a class $[\alpha]$ is an equivariant differential form $\alpha(x)$
on $M$ with compact supports and such that the equivariant
differential $D\alpha$ of $\alpha$ vanishes in a neighborhood of
$Z$.
In the appendix, we show under mild assumptions on $M$ and $Z$ that our space $H^*_{G,c}(Z)$ is naturally isomorphic with the (topological) equivariant    cohomology of $Z$ with compact supports.

In the second section, we define the infinitesimal index. Let $M$
be a $G$-manifold provided with a $G$-invariant one form $\sigma$
(we will say that $\sigma$ is an action form). Let $v_x$ be the
vector field on $M$ associated to $x\in \mathfrak g$ and $i_x$  the derivation on forms induced by  contraction with $v_x$. The moment
map $\mu:   \mathfrak g\to C^\infty (M)$ or  $\mu: M\to \mathfrak g^*$ is defined by
$\mu(x)=-\langle\sigma, v_x\rangle=-i_x(\sigma)$. Then
$$\Omega(x)=\mu(x)+d\sigma=D\sigma(x)$$ is a closed (in fact exact)
 equivariant form on $M$.  The symbol $D$ denotes in this paper the equivariant differential as  defined in the Cartan model (see Formula \eqref{eqdif}.)

Our main remark is the following
\begin{proposition}
If $f$ is a smooth function on $\mathfrak
g^*$ with compact support, and $\alpha$ is a compactly supported equivariant form such that the
differential $D\alpha$   vanishes in a neighborhood of
$M^0:=\mu^{-1}(0),$ then the double integral
$$\int_M\int_{\mathfrak g} e^{is\Omega(x)}\alpha(x) \hat f(x) dx$$
is independent of $s,$ for $s$ sufficiently large.
\end{proposition}  Here $\hat f(x)$
is the Fourier transform of $f$. Some comment is in order: if
$\alpha(x)$ is closed (and compactly supported) on $M$, it is
clear that the integral $\int_{M}e^{i s\Omega(x)}\alpha(x)$ is
independent of $s$ as $\Omega(x)=D\sigma(x)$ is an exact
equivariant form. In our context, $\alpha(x)$ is compactly
supported, but $\alpha(x)$ is {\bf not closed} on $M$: only its
restriction to a neighborhood of $M^0$ is closed. This is however
sufficient to prove that
\begin{equation}\label{definf}
 \langle \infdex_G^{\mu}([\alpha]),f\rangle = \lim_{s\to \infty} \int_M
 \int_{\mathfrak g}
 e^{is \Omega(x)} \alpha(x) \hat f(x) dx
 \end{equation}
is a well defined map from $H_{G,c}^*(M^0)$ to invariant
distributions on $\mathfrak g^*$. This we call the {\em infinitesimal index}.
The infinitesimal index  does not depend of some deformations of the form $\sigma$, see Theorem \ref{dein}.

In the third and fourth sections, we prove a certain number of functorial
properties of the infinitesimal index: the locality (excision) property in Subsection \ref{excision}, the functoriality with respect to subgroups in Subsection \ref{subgroup},
the stability with respect to immersions in Subsection \ref{push}.

One of the most important properties is
the free action property that we prove in Subsection \ref{sub:fre}.
Consider the situation where the compact
Lie group $L$ acts freely on $M$ and $0$ is a regular value of
$\mu$. Then the infinitesimal index of a class $[\alpha]$ is a
polynomial density on $\mathfrak l^*$. Its value at $0$ is the
integral of the cohomology class corresponding to $[\alpha]$ by
the Kirwan map over the reduced space $\mu^{-1}(0)/L$. This is
essentially Witten nonabelian localization theorem \cite{wit}.
 We  give also the double equivariant version, where  a compact Lie group $G$ acts
on $M$ commuting with the free action of $L$.

We then deduce from these properties the stability with respect to induction  in Subsection \ref{inductioninfdex}, and a comparison formula with the infinitesimal index  for the maximal torus of $G$  in Subsection \ref{maxtori}.

\bigskip

Let us comment on previous work around this theme.

The use of the form $e^{i s D\sigma}$, in order  to "localize"
integrals, is the main principle in Witten nonabelian localization
theorem \cite{wit}, \cite{jef-kir}, and our definition of the
infinitesimal index
 is strongly inspired by this principle.

P.-E. Paradan has studied systematically the situation of a
manifold $M$ provided with a $G$-invariant action form $\sigma$.
Indeed, he constructed in \cite{pep2} a closed equivariant form
$P_\sigma$ on $M$, congruent to $1$ in cohomology and
 supported near $M^0$.
 Paradan's form $P_\sigma$ is constructed
using equivariant cohomology with $C^{-\infty}$-coefficients.
Multiplying $\alpha(x)$ by Paradan's form $P_\sigma(x)$ leads to a
closed compactly supported equivariant form on $M$ and $I(x):=\int_M P_\sigma(x)\alpha(x)$ is a generalized function
on $\mathfrak g$. As we explain in Remark \ref{par}, our
infinitesimal index is the Fourier transform of $I(x)$. Properties
of the infinitesimal index could thus be deduced by Fourier
transform from the functorial properties of $P_\sigma$ proven in
\cite{pep2}, \cite{par-ver2}.
For example, the independence of the infinitesimal index  with respect to some deformations of the form $\sigma$ is an important tool,
similar to independence  for $P_\sigma$ proven in more general setting in \cite{pep2}(Proposition 2.6).
 The formula for $\infdex_G^{\mu}$ in function of a maximal torus of $G$ is similar to a remarkable formula  in  \cite{pep2} (Theorem 4.5).
However, we have chosen here to prove directly   properties of the infinitesimal index
  by using  our  limit definition.
There are two advantages in doing so. First,  we believe that
the proofs are easier.
Secondly, this is the framework
we will use in  \cite{dpv3} to produce piecewise polynomial densities (also called spline distributions)
directly on  $\mathfrak g^*$ from some compactly supported equivariant classes.

In the case where $M=N\oplus N^*$, where $N$ is a representation
space for a linear action of a torus $G$,   we determined $K_G(M^0)$ as a space of
functions on $\hat G$ using the multiplicity index map  in the article
\cite{dpv1}.   In a companion article \cite{dpv2},    we have used the
infinitesimal index to identify $H_{G,c}^*(M^0)$ to a space of
spline distributions on $\mathfrak \g^*$ of which the functions describing the index are a discrete analogue.

Finally using the analogies between splines and discrete functions, we  have compared in \cite{dpv3}
 the equivariant cohomology with compact supports and the equivariant $K$-theory of $M^0$, by relating the infinitesimal index and the multiplicity index map.

 We wish to thank  Paul-\'Emile Paradan for his comments.

\section {Equivariant de Rham cohomology\label{EdR}}

Let  $M$ be a $C^\infty$ manifold with a  $C^\infty$ action of a compact Lie group $G$. We are going to define its equivariant cohomology with compact supports
 following  Cartan definition  (see \cite{gui-ste99}).

We define   the space of compactly supported equivariant forms as
$${\mathcal A}_{G,c}(M)=(S(\mathfrak g^*)\otimes {\mathcal A}_{c} (M))^G$$
with the grading given  setting $\mathfrak g^*$ in degree 2. Here
$ {\mathcal A}_{c} (M)$ is the algebra of differential forms on
$M$ with compact supports.
Thus an element of ${\mathcal A}_{G,c}(M)$ can be written
$\alpha(x) =\sum_{a=1}^R P_a(x) \alpha^a$ where $P_a(x)$ are  polynomial functions on  $\mathfrak g$, and $\alpha^a$   differential forms with compact support on $M$.

Each element $x\in\mathfrak g$  of the Lie algebra of $G$ induces
a vector field $v_x$ on $M$, the {\em infinitesimal generator} of
the action: here the sign convention is that
$v_x=\frac{d}{d\epsilon} \exp (-\epsilon x)\cdot m$ in order that
the map $x\to v_x$ be a Lie algebra homomorphism. A vector field
$V$ on $M$  induces a derivation $\iota_V$ on forms, such that
$\iota_V(df)=V(f)$, and for simplicity we denote by
$\iota_x=\iota_{v_x}$.

One defines the differential as follows. Given $\alpha\in
{\mathcal A}_{G,c}(M)$, we think of $\alpha$ as an equivariant
polynomial map on $\mathfrak g$ with values in $ \mathcal  A_c(M)$,
thus, for any $x\in\mathfrak g$, we set
\begin{equation}
\label{eqdif}D\alpha(x):=d(\alpha(x))-\iota_x (\alpha (x))
\end{equation}
where $d$ is the usual de Rham differential.

It is easy to see that $D$  increases the degree by one and that
$D^2=0$. Thus we can take cohomology and we get the
$G$-equivariant cohomology of $M$ with compact supports.

Now take a $G$-stable closed set $Z$ in a manifold $M$.
 Consider the open set $U=M\setminus Z$.
 Then $U$ is a manifold and we have an inclusion of complexes ${\mathcal A}_{G,c}(U)\subset {\mathcal A}_{G,c}(M)$ given by extension by zero.
    We set $${\mathcal A}_{G,c}(Z,M):={\mathcal A}_{G,c}(M)/{\mathcal A}_{G,
    c}(U).$$

 \begin{definition}\label{due} The  equivariant de Rham cohomology  with compact supports $H^*_{G,c}(Z)$ is  the cohomology of the complex ${\mathcal A}_{G,c}^*(Z,M)$.
\end{definition}

  Notice that ${\mathcal A}_{G,c}(U)$ is an
ideal in $ {\mathcal A}_{G,c}(M)$ so ${\mathcal A}_{G,c}(Z,M)$ is a
differential graded algebra and $H^*_{G,c}(Z)$ is a graded algebra
(without 1 if $Z$ is not compact).

In this model, a representative of a class in $H^*_{G,c}(Z)$ is an
equivariant form $\alpha(x)$ with {\bf compact support} on $M$.
The form $\alpha$ is not necessary equivariantly closed on $M$,
but there exists a neighborhood  of $Z$ such that the restriction
of $\alpha(x)$ to  this neighborhood is equivariantly closed.

If $Z$ is compact, the class $1$ belongs to $H^*_{G,c}(Z)$: a
representative of $1$ is a $G$-invariant function $\chi$ on $M$
with compact support and identically equal to $1$ on a
neighborhood of $Z$ in $M$.

\begin{remark}
 Our model for $H^*_{G,c}(Z)$ seems to depend of the ambient manifold $M$. However, in the appendix we are going to see that, under mild assumptions on $M$ and $Z$, $H^*_{G,c}(Z)$ is naturally isomorphic with the equivariant   singular cohomology of $Z$ with compact supports.

 \end{remark}

By the very definition of $H^*_{G,c}(Z)$, we also deduce

\begin{proposition}\label{numero6}
Let $M$ be a $G$-space, $j:Z\to M$ the inclusion of a closed $G$-stable subset. Denote
by $i:U\to M$ the inclusion  of $U:=M\setminus Z$. We have a long exact sequence
\begin{equation}\label{lunga}
\begin{CD}\cdots\to H^h_{G,c}(U)@>i_*>> H^h_{G,c}(M)@>j^*>> H^h_{G,c}(Z)@> >>  H^{h+1}_{G,c}(U)\to\cdots .\end{CD}
\end{equation}
\end{proposition}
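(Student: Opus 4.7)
The plan is to deduce the long exact sequence from the tautological short exact sequence of complexes
\begin{equation*}
0 \to {\mathcal A}_{G,c}(U) \xrightarrow{i_*} {\mathcal A}_{G,c}(M) \xrightarrow{j^*} {\mathcal A}_{G,c}(Z,M) \to 0
\end{equation*}
and apply the standard zig-zag/snake construction in homological algebra. The map $j^*$ is the quotient map, which is surjective by definition of ${\mathcal A}_{G,c}(Z,M)$, and the exactness of the sequence at the middle term is also immediate from the definition of the quotient. The only substantive point to verify is that $i_*$, realized as extension by zero, is a well-defined injection of differential complexes.

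First I would check that $i_*$ makes sense: if $\beta \in {\mathcal A}_{G,c}(U)$ has support in a compact set $K \subset U$, then $K$ is closed in $M$ and disjoint from $Z$, so extending $\beta$ by zero across $Z$ yields a smooth compactly supported form on $M$, which is still $G$-equivariant. This extension is clearly injective, and it commutes with the exterior derivative $d$ and with contraction $\iota_x$ for all $x \in \mathfrak{g}$, so it commutes with the equivariant differential $D$ defined in \eqref{eqdif}. Thus $i_*$ is a morphism of complexes. Since the grading and the $G$-action are preserved under extension by zero, the sequence lives in the category of $G$-equivariant graded complexes.

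Next, the long exact sequence in cohomology then follows from the snake lemma applied to this short exact sequence. The connecting homomorphism $H^h_{G,c}(Z) \to H^{h+1}_{G,c}(U)$ is constructed in the usual way: given a class $[\gamma] \in H^h_{G,c}(Z)$, lift a representative $\bar\gamma$ to an equivariant form $\alpha \in {\mathcal A}_{G,c}(M)$; then $D\alpha$ vanishes in the quotient, which means $D\alpha$ is supported in $U$, i.e.\ $D\alpha = i_*(\eta)$ for a unique $\eta \in {\mathcal A}^{h+1}_{G,c}(U)$ which is $D$-closed, and the connecting map sends $[\gamma]$ to $[\eta]$. Independence of choices and exactness at each term are verified by the routine diagram-chases of the snake lemma.

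There is no real obstacle here; the only delicate (but standard) point is verifying that extension by zero of a compactly supported smooth equivariant form remains smooth across $Z$, which is immediate from the fact that the support is already compact and closed in $M$ before extension. Everything else is the usual homological algebra of short exact sequences of complexes.
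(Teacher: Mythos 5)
Your proof is correct and is essentially the argument the paper has in mind: the defining short exact sequence of complexes $0 \to {\mathcal A}_{G,c}(U) \to {\mathcal A}_{G,c}(M) \to {\mathcal A}_{G,c}(Z,M) \to 0$ (where the first map is extension by zero and the second is the quotient projection) yields the stated long exact sequence by the standard zig-zag lemma, which is precisely what ``by the very definition of $H^*_{G,c}(Z)$'' indicates. Your careful check that extension by zero is a well-defined injective chain map commuting with $D$ is a useful expansion of the detail the paper leaves implicit.
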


\bigskip

If $i:Z\to M$ is a closed  $G$-submanifold of  a manifold $M$,
 the restriction of forms gives rise to a well defined map
 $i^*:{\mathcal A}_{G,c}(Z,M)\to {\mathcal A}_{G,c}(Z)$.

 \begin{proposition}
If $Z$ is a closed $G$-invariant  submanifold of  a manifold  $M$ admitting an
equivariant tubular neighborhood, the  map $i^*$  induces an isomorphism in
cohomology.
\end{proposition}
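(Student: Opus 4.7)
The plan is to prove the statement by an explicit chain homotopy argument, after first reducing to the case where $Z$ is the zero section of its normal bundle. For the reduction, choose an equivariant open tubular neighborhood $V \subset M$ of $Z$, equivariantly diffeomorphic to the normal bundle $\pi \colon N \to Z$, and a $G$-invariant bump function $\chi_V$ equal to $1$ on a neighborhood of $Z$ and supported in $V$. Extension by zero induces a chain map $\mathcal{A}_{G,c}(Z, V) \to \mathcal{A}_{G,c}(Z, M)$; I would verify that it is an isomorphism of complexes. For surjectivity, any $\alpha \in \mathcal{A}_{G,c}(M)$ satisfies $\alpha \equiv \chi_V \alpha$ modulo $\mathcal{A}_{G,c}(U)$ because $(1 - \chi_V)\alpha$ is supported in $U$, and $\chi_V \alpha$ is supported in $V$. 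For injectivity, any $\beta \in \mathcal{A}_{G,c}(V)$ whose extension by zero lies in $\mathcal{A}_{G,c}(U)$ already vanishes on a neighborhood of $Z$, hence lies in $\mathcal{A}_{G,c}(V \setminus Z)$. So it suffices to prove $i^* \colon \mathcal{A}_{G,c}(Z, N) \to \mathcal{A}_{G,c}(Z)$ is a quasi-isomorphism.

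Fix a $G$-invariant cutoff $\chi$ on $N$ equal to $1$ near $Z$ and compactly supported. Define $j \colon \mathcal{A}_{G,c}(Z) \to \mathcal{A}_{G,c}(Z, N)$ by $j(\omega) = [\chi \pi^* \omega]$; since $D\chi$ vanishes near $Z$ and $\pi$ is equivariant (so $D$ commutes with $\pi^*$), $j$ is a chain map and $i^* \circ j = \mathrm{id}$ holds at the level of complexes. For $j \circ i^* \sim \mathrm{id}$, I would use the Euler vector field $E$ on $N$, whose flow $\phi_t(v) = tv$ satisfies $\phi_1 = \mathrm{id}$ and $\phi_0 = i \circ \pi$, together with the associated Cartan homotopy
$$ K\alpha \;=\; \int_0^1 \frac{1}{t}\, \phi_t^*(\iota_E \alpha)\, dt. $$
The factor $1/t$ is compensated by the first-order vanishing of $E$ along $Z$ and by the factor of $t$ that $\phi_t^*$ contributes in each vertical cotangent direction, so the integrand is bounded near $t = 0$ and $K\alpha$ is a well-defined smooth $G$-invariant form on $N$ (though not in general compactly supported). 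Cartan's formula $L_E = D\iota_E + \iota_E D$, combined with $\frac{d}{dt}\phi_t^* = \frac{1}{t}\phi_t^* L_E$ and the $G$-equivariance of $\phi_t$, then yields the pointwise identity $\alpha - \pi^* i^* \alpha = DK\alpha + KD\alpha$ on $N$.

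To descend this identity to the quotient $\mathcal{A}_{G,c}(Z, N)$, multiply by $\chi$. Using $D(\chi K\alpha) = D\chi \cdot K\alpha + \chi DK\alpha$, one obtains
$$ D(\chi K\alpha) \;=\; \chi\alpha - \chi\pi^* i^*\alpha + D\chi \cdot K\alpha - \chi KD\alpha. $$
Here $\chi K\alpha$ is compactly supported because $\chi$ is. The term $D\chi \cdot K\alpha$ is compactly supported and vanishes on a neighborhood of $Z$, hence lies in $\mathcal{A}_{G,c}(N \setminus Z)$. Likewise $\chi KD\alpha$ lies in $\mathcal{A}_{G,c}(N \setminus Z)$: since $D\alpha \in \mathcal{A}_{G,c}(N \setminus Z)$ vanishes near $Z$, so does $\iota_E D\alpha$, and this vanishing is preserved by each $\phi_t^*$. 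Finally $(1-\chi)\alpha$ also lies in $\mathcal{A}_{G,c}(N \setminus Z)$. Modulo $\mathcal{A}_{G,c}(N \setminus Z)$ we therefore have $\alpha \equiv j(i^*\alpha) + D(\chi K\alpha)$, which gives $j \circ i^* = \mathrm{id}$ on cohomology and completes the proof.

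The main obstacle is that the Cartan operator $K$ does not preserve compact support: pulling back along the inward scaling $\phi_t$ for small $t$ stretches support outward, so $K\alpha$ is only smooth, not compactly supported. The trick that saves the argument is to form $K\alpha$ first as a non-compactly-supported form and multiply by the cutoff $\chi$ at the end; this restores compact support at the cost of introducing three boundary terms in Cartan's formula, but verifying that all three belong to $\mathcal{A}_{G,c}(N \setminus Z)$ is what allows the non-compactly-supported Cartan homotopy to descend to the quotient complex computing $H^*_{G,c}(Z, N)$.
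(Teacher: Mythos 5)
Your proof is correct and follows the same approach as the paper: reduce to the normal bundle by restricting to a tubular neighborhood, and use the map $\omega\mapsto \chi\,\pi^*\omega$ (the paper's $f(\|m\|^2)p^*\omega$) as the inverse to $i^*$. The paper dismisses the verification as ``easily seen''; you supply the implicit details, namely the Cartan homotopy built from the Euler vector field and the scaling flow, together with the cutoff trick needed to repair the loss of compact support in $K\alpha$.
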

\begin{proof}
  We  reduce to the case in which $M$
  is a vector bundle on $Z$ by restriction to a tubular neighborhood.
 Put a $G$-invariant metric on this bundle and
 let $p:M\to Z$ be the projection.
 Choose a  $C^\infty$ function $f$ on $\mathbb R$ with compact support
 and equal to 1 in a neighborhood of 0. We map an equivariant  form
 $\omega\in\mathcal  {\mathcal A}_{G,c}(Z)$ to
 $f(\|m\|^2)p^*\omega(m)$
 and then to its class modulo ${\mathcal A}_{G,c}(U)$.
 It is easily seen that this map is an inverse in cohomology of the map $i^*$.
\end{proof}

 Assume that $M$ is a $L\times G$ manifold and that  $L$ acts freely on $M$ so that $M/L$ is a manifold with a   $G$--action. Let $Z$ be a $G\times L$ closed subset of $M$.
   Denote by $p:M\to M/L$ the projection. The pull back of forms on $M/L$ induces a map from $p^*:H^*_{G,c}(Z/L)\to H^*_{L \times G,c}(Z)$.
   %The following proposition is proven as in Cartan  (see \cite{gui-ste99} or \cite{duf-kum-ver}).

  \begin{proposition}\label{pro:free}  The pull back  $$p^*:H^*_{G,c}(Z/L)\to H^*_{L \times G,c}(Z)$$   is an isomorphism
\end{proposition}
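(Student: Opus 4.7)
My plan is to follow the classical Cartan argument for the equivariant cohomology of a free action, as exposed in \cite{gui-ste99}, and to verify that it adapts to the present compactly supported, relative setting. The two essential ingredients will be the properness of $p$ (to control supports) and the existence of an $L\times G$-invariant connection on the principal $L$-bundle $p\colon M\to M/L$ (to build the quasi-inverse via Chern--Weil substitution).

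First I would check that $p^*$ is a well-defined morphism of complexes. Because $L$ is compact, $p$ is proper, so pullback preserves compact support of forms. Since $Z$ is $L$-stable one has $p^{-1}(Z/L)=Z$, and therefore $p^{-1}((M/L)\setminus(Z/L))=M\setminus Z=U$; hence $p^*$ sends the ideal $\CA_{G,c}((M/L)\setminus(Z/L))$ into the ideal $\CA_{L\times G,c}(U)$ and descends to a chain map of quotients
\[
p^*\colon \CA_{G,c}(Z/L, M/L)\longrightarrow \CA_{L\times G,c}(Z, M).
\]
Compatibility with the equivariant differential $D$ is automatic: $p$ is $G$-equivariant and its differential annihilates the fundamental fields $v_y$ for $y\in\l$, so $\iota_y p^*\omega=0$ and $p^*\omega$ is independent of the $\l^*$-variable. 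Thus the image of $p^*$ lies in the $L$-basic part of the Cartan complex.

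Next I would construct a quasi-inverse using an $L\times G$-invariant connection $\theta\in \CA^1(M)\otimes \l$, which exists because $L$ is compact. Cartan's construction then produces a chain map $\mathcal{C}\colon \CA_{L\times G,c}(M)\to \CA_{G,c}(M/L)$ by horizontal projection with respect to $\theta$ followed by substitution of the equivariant curvature of $\theta$ for the $\l^*$-variable. Explicit chain homotopies built from $\theta$ show that $\mathcal{C}\circ p^*=\mathrm{id}$ and that $p^*\circ \mathcal{C}$ is homotopic to the identity. This is precisely the content of the classical Cartan theorem proved in \cite{gui-ste99} and \cite{duf-kum-ver}.

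The step I expect to require the most care is verifying that every ingredient of the previous paragraph preserves the subcomplexes defining the relative cohomology, so that the quasi-isomorphism on the full complexes descends to the quotients. Horizontal projection, curvature substitution, and the homotopy operators are all built pointwise from $\theta$ and are $L\times G$-equivariant; in particular they commute with multiplication by $L$-invariant smooth functions. Consequently a form supported in $U$ on $M$ is sent to a form supported in $U$, and a form supported in $(M/L)\setminus(Z/L)$ on $M/L$ is sent to one supported in $(M/L)\setminus(Z/L)$. Passing to the quotients then gives the desired isomorphism $p^*\colon H^*_{G,c}(Z/L)\xrightarrow{\sim} H^*_{L\times G,c}(Z)$.
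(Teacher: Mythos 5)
Your proof is correct, but it follows a genuinely different route from the paper's. The paper applies the classical Cartan isomorphism twice, once to the pair $(M, M/L)$ and once to the pair $(M\setminus Z, (M\setminus Z)/L)$, to get isomorphisms $H^*_{G,c}(M/L)\cong H^*_{L\times G,c}(M)$ and $H^*_{G,c}((M\setminus Z)/L)\cong H^*_{L\times G,c}(M\setminus Z)$; it then compares the two long exact sequences of Proposition \ref{numero6} and concludes by the five lemma. You instead work directly at the level of the relative complexes $\CA_{G,c}(Z/L,M/L)\to\CA_{L\times G,c}(Z,M)$, constructing the Cartan quasi-inverse via a connection and checking that the Chern--Weil substitution and the homotopy operators are local and hence preserve the ideal of forms supported in $U=M\setminus Z$. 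Both routes are legitimate. The paper's version is shorter and more conceptual because it delegates all the analytic work to the absolute case and then uses pure homological algebra; your version avoids invoking the long exact sequence but requires one to verify, as you flag, that every chain map and homotopy in the Cartan argument preserves both compact supports (where properness of $p$ is used) and the subcomplex cut out by the open set $U$. That verification is plausible since the operators are built pointwise from the connection $1$-form, but it is precisely the part the paper's argument lets you skip, so if you pursue your route you should write out explicitly which homotopy operator is being used (e.g.\ from the Mathai--Quillen comparison of Weil and Cartan models) and why it is support-preserving.
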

\begin{proof} The fact that  the pull back of forms induces an isomorphism between
$H^*_{G,c}(M/L)$ and $ H^*_{L \times G,c}(M)$,  and between $H^*_{G,c}((M\setminus Z)/L)$ and $ H^*_{L \times G,c}(M\setminus Z)$,
is proven as in Cartan  (see \cite{gui-ste99} or \cite{duf-kum-ver}).
Our statement then follows from Proposition \ref{numero6}.
\end{proof}

\section{Basic definitions}

  \subsection{ Action form and the moment map}

 Let $G$ be a Lie group and  $M$
a $G$-manifold.  \begin{definition}
An {\em action form} is a $G$-invariant  real one form
$\sigma$ on $M$.
\end{definition}

 The prime examples  of this setting are when  $M$ is even
dimensional and $d\sigma$ is nondegenerate. In this case
$d\sigma$ defines a symplectic structure on $M$.

\begin{example}\label{maes}
For  every manifold $N$, we may take
  its cotangent bundle
$M:=T^*N$ with projection $\pi: T^*N\to N$. The canonical action
form $\sigma$   on a tangent vector $v$ at a point $(n,\phi),\
n\in N,\phi\in T^*_nN$  is given by
$$\langle\sigma\,|\,v\rangle:=\langle\phi\,|\,d\pi(v)\rangle.$$
In this setting, $d\sigma$ is a canonical symplectic structure on $T^*N$ and, if $r=\dim(N)$,  the form $\frac{d\sigma^r}{r!}$  determines an orientation and a measure, the {\em Liouville measure} on $T^*N$. If  a group
$G$ acts on $N$, then it acts also on $T^*N$ preserving the
canonical action form and hence the symplectic structure and the Liouville measure.
\end{example}\smallskip

\begin{remark} If $M$ is a  manifold with a $G$--invariant Riemannian
structure, we can consider  an invariant vector field  instead of
a one form.
\end{remark}

 \begin{definition} Given an action form $\sigma$
we  define the {\em moment map} $\mu_{\sigma}:M\to\mathfrak g^*$ associated to $\sigma$ by:
\begin{equation}\label{moma}
\mu_{\sigma}(m)(x):=-\langle \sigma\,|\,v_x\rangle(m)=-\iota_x (\sigma)(m)
\end{equation}
for $m\in M$, $x\in \mathfrak g$.
\end{definition}

\begin{remark}

Due to our sign convention for $v_x$, we have

$$\mu(m)(x):=\langle \sigma\,|\,\frac{d}{d\epsilon}\exp(\epsilon x)\cdot
m\rangle.$$

 The moment map is a
$G$--equivariant map, where on $\mathfrak g^*$ we have the {\em
coadjoint action}.

\end{remark}

The form  $d\sigma$ is  a closed  two form on $M$. Then
$D\sigma(x)=\mu(x)+d\sigma$ is a closed (in fact exact)
equivariant form on $M$.

  Let us present a few examples.

\begin{example}
 \label{ts} In Example \ref{maes},
   take  $N=S^1=\{e^{i\theta}\}$. The form $d\theta$ gives a trivialization    $T^*S^1=S^1\times \R$. The vector field $\pd{}{\theta}$   gives a  canonical generator  of the Lie algebra of $S^1$ and $d\theta$,  a generator  for the dual. The circle group $S^1$ acts
freely by rotations on itself. If $[e^{i\theta},t]$ is a
point of $T^*S^1$ with $t\in \R$, the action form $\sigma $  is
$\sigma=t d\theta$. The function $t$ is the moment map and
$dt\wedge d\theta$ the symplectic form.

 More generally, take $N=G$   a  Lie group.  Denote by
 $$L(g):h\mapsto gh,\ R(g):h\mapsto  hg^{-1}$$ the left and right actions of $G$ on $G$ and by extension also on $T^*G$.
 Let us now trivialize  $T^*G=G\times  \mathfrak g^*$ using left invariant  forms.
 Then, in this trivialization, for $h\in G$ and $\xi\in \mathfrak g^*,$
 $$L(g)(h,\xi)=(gh,\xi),\hspace {1cm} R(g)(h,\xi)=(hg^{-1},g\xi).$$

% We denote by ${\rm Ad}_g$ the adjoint action and by ${\rm Ad}_g^*$
% the coadjoint action. By definition
% $$\langle {\rm Ad}_g^* (\phi)\,|\,x \rangle=\langle \phi \,|\, {\rm Ad}_{g^{-1}} x\rangle. $$
% Then, we have

Call $\pi: T^*G\to G$ the canonical projection.  Fix a basis $\psi_1,\ldots,\psi_r$ of left invariant one forms on $G$ so that a point of   $T^*G=G\times  \mathfrak g^*$ is a pair $(g,\zeta)=(g,\sum_i\zeta_i\psi_i)$.  Clearly the action form is  $\sigma=\sum_i\zeta_i\pi^*(\psi_i)$, the symplectic form is  $\sum_id\zeta_i\wedge \pi^*(\psi_i)+\sum_i\zeta_i\pi^*(d\psi_i)$. In the noncommutative case,  in general $d\psi_i\neq 0$, nevertheless when we compute the Liouville form we immediately see that these terms disappear and
\begin{equation}
\label{liofo}  \frac{d\sigma^r}{r!}= d\zeta_1\wedge \pi^*(\psi_1)\wedge\cdots\wedge d\zeta_r\wedge \pi^*(\psi_r).
\end{equation}
We can rewrite this as $\frac{d\sigma^r}{r!}= (-1)^{\frac{r(r-1)}{2}}  d\zeta \wedge  \pi^*(V_{\psi})$
    where we set
 $V_{\psi}:=\psi_1\wedge \psi_2\wedge \cdots\wedge  \psi_r$ and  $d\zeta:=d\zeta_1\wedge\cdots \wedge d\zeta_r$.
 At this point it is clear that $V_{\psi}$  gives   a Haar measure $dg$ on $G$ while $d\zeta$ gives   a translation invariant measure on $\mathfrak g^*$.
 Thus we have (identifying top forms with measures according to the orientation of $T^*G$ given by $\frac{d\sigma^r}{r!}$)
 \begin{equation}
\label{liofo1}  \frac{d\sigma^r}{r!}= d\zeta dg.
\end{equation}

 \begin{remark}
We can further normalize our choice of the basis  of left invariant forms so that  $V_{\omega}$  gives the normalized  Haar measure   giving volume 1 to $G$. This normalizes also the  translation invariant measure on $\mathfrak g^*$.  The only further choice consists in choosing a  orientation for $G$ so that we have an induced orientation for $\mathfrak g^*$  giving the canonical orientation on $T^*G$.
\end{remark}  \smallskip

  Let us call  $\mu_\ell,\mu_r$  the moment maps for the left or right action of
  $G$ respectively.
By the very definition of $\sigma$, we have
\begin{proposition}
\begin{equation}\label{mmm} \mu_\ell(g,\psi)=  g\psi,\quad \mu_r
(g,\psi)= -\psi,  \quad\text{left trivialization}.
\end{equation}
\end{proposition}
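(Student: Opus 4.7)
The plan is to unravel the definitions, computing the infinitesimal vector fields on $G$ for the left and right actions, lifting them to $T^*G$, and then pairing them with the action form $\sigma=\sum_i\zeta_i\pi^*(\psi_i)$.

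First I would record the vector fields on $G$ using the sign convention $v_x=\frac{d}{d\epsilon}|_0\exp(-\epsilon x)\cdot m$. For the left action $L_g:h\mapsto gh$, one gets
$$v^{\ell}_x(h)=\frac{d}{d\epsilon}\Big|_0 \exp(-\epsilon x)\,h=-x^R_h,$$
the right-invariant vector field $-x^R$ evaluated at $h$. For the right action $R_g:h\mapsto hg^{-1}$ one gets $\exp(-\epsilon x)\cdot h = h\exp(\epsilon x)$, hence
$$v^r_x(h)=\frac{d}{d\epsilon}\Big|_0 h\exp(\epsilon x)=x^L_h,$$
the left-invariant vector field $x^L$ at $h$. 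Since the lifted actions on $T^*G$ commute with $\pi:T^*G\to G$, only $d\pi(v_x)$ enters the computation of $\iota_x\sigma$ on $\sigma=\sum_i\zeta_i\pi^*(\psi_i)$.

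Next I would do the easy case, namely the right action. At $(g,\zeta)$,
$$\iota_{v^r_x}\sigma=\sum_i\zeta_i\,\psi_i(g)(x^L_g)=\sum_i\zeta_i\,\psi_i(e)(x)=\zeta(x),$$
because $\psi_i$ is left invariant and $x^L$ is the left-invariant extension of $x\in\g$. The definition $\mu_r(g,\zeta)(x)=-\iota_{v^r_x}\sigma$ therefore gives $\mu_r(g,\zeta)=-\zeta$.

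For the left action the same recipe applies, but I now need to convert the right-invariant vector field $x^R$ into a left-invariant one before pairing with $\psi_i$. The identity
$$\exp(\epsilon x)\,g = g\exp\bigl(\epsilon\,{\rm Ad}_{g^{-1}}x\bigr)$$
gives $x^R_g=(L_g)_*({\rm Ad}_{g^{-1}}x)=({\rm Ad}_{g^{-1}}x)^L_g$. Combined with left-invariance of $\psi_i$ one finds
$$\iota_{v^{\ell}_x}\sigma\big|_{(g,\zeta)} =-\sum_i\zeta_i\,\psi_i(g)(x^R_g) =-\sum_i\zeta_i\,\psi_i(e)({\rm Ad}_{g^{-1}}x) =-\zeta({\rm Ad}_{g^{-1}}x).$$
Using the defining formula $\langle {\rm Ad}_g^*\zeta\,|\,x\rangle=\langle\zeta\,|\,{\rm Ad}_{g^{-1}}x\rangle$ this equals $-({\rm Ad}_g^*\zeta)(x)$, so $\mu_\ell(g,\zeta)={\rm Ad}_g^*\zeta$, as claimed. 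The only real subtlety—and the one I would take most care with—is keeping the sign conventions of $v_x$ and of ${\rm Ad}^*$ consistent, together with the fact that the left action on $G$ is generated by a right-invariant vector field.
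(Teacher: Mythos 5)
Your proof is correct and follows essentially the same route as the paper's (which is quite terse): compute the infinitesimal generators of the left and right actions with the paper's sign convention, pair with $\sigma=\sum_i\zeta_i\pi^*(\psi_i)$, and use the fact that converting the right-invariant field $x^R_g$ into a left-invariant one introduces $\mathrm{Ad}_{g^{-1}}$. Your version simply spells out the sign bookkeeping (in particular $v^\ell_x=-x^R$ and $v^r_x=+x^L$) that the paper leaves implicit.
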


%Thus, if $\mu=\mu_\ell\oplus \mu_r$ is the moment map for the action of $G\times G$ on $T^*G$,
%
%\begin{equation}\label{mmmm}
%\mu(g,\psi)(x,y)=  \langle \psi,g^{-1}x-y\rangle.
%\end{equation}

If we had used right invariant forms in order to trivialize the
bundle, we would have
\begin{equation}
\label{mmm1} \mu_r(g,\psi)=-g\psi,\quad \mu_\ell
(g,\psi)=  \psi, \quad\text{\em right trivialization}.
\end{equation}
\end{example}

\begin{example}\label{v}
We get another example in the case of a symplectic vector space
$V$ with   antisymmetric form $B$. Then $\sigma=\frac{1}{2}B(v,dv)$
is a one  form on $V$ invariant under the action of the symplectic
group $G$ so that $d\sigma=\frac{1}{2}B(dv,dv)$ is a symplectic
two form on $V$. The moment map $\mu:V\to \mathfrak g^*$ is given
by $\mu(v)(x)=\frac{1}{2}B( v,xv)$.

For example,  let $M:=\R^2$ with coordinates $v:=[v_1,v_2],\
B=\begin{vmatrix} 0&1\\-1&0\end{vmatrix}$. The action form
$\sigma$ is $\frac{1}{2} (v_1 dv_2-v_2 dv_1)$ and
$d\sigma=dv_1\wedge dv_2$. The compact part of the symplectic
group is the circle group $S^1$ acting by rotations. The moment
map  is $\frac{v_1^2+v_2^2}{2}.$
\end{example}

\begin{remark}  Given  a vector space $N$,
  the space $N\oplus N^*$ has a canonical symplectic structure given by
\begin{equation}\label{casp}
\langle (u,\phi)\,|\, (v,\psi)\rangle:=\langle  \phi \,|\,  v
\rangle-\langle  \psi \,|\, u\rangle.
\end{equation}
The symplectic structure $d\sigma$  on the cotangent bundle to a
vector space $N$ gives a symplectic structure $B$ to the vector
space $T^*N=N\oplus N^*.$

 The action form   $\sigma$ coming from  the cotangent structure
 is not the same than the action form on $N\oplus N^*$ given by duality \eqref{casp}
 (in case $V=\R$, $ydx$ versus $\frac{1}{2}(ydx-xdy)$),
 but the moment map relative to the subgroup $GL(N)$ acting by $(gn,^tg^{-1}\phi)$
 is the same, as well as $d\sigma$.
\end{remark}
\subsection{The cohomology groups $\mathcal
H^{\infty}_{G,c}(M)$\label{infcc}}
We will need to extend the  notion of equivariant
cohomology groups. Consider the space $C^\infty(\mathfrak
g)$ of $C^{\infty}$ functions on $\mathfrak g$. We may consider the $\Z/2\Z$-graded spaces ${\mathcal A}^{\infty}_{G}(M
)$  (or ${\mathcal A}^{\infty}_{G,c}(M
)$  consisting of the $G$-equivariant $C^\infty$ maps  from $\mathfrak g$ to ${\mathcal A}(M)$ (or to ${\mathcal A}_c(M)$). The
equivariant differential $D$ is well defined on ${\mathcal
A}^{\infty}_{G}(M )$ ( or  on ${\mathcal
A}^{\infty}_{G,c}(M )$) and takes even forms to odd forms and vice
versa. Thus we get the cohomology groups $\mathcal
H^{\infty}_{G}(M)$, $\mathcal
H^{\infty}_{G,c}(M)$. The group $\mathcal H^{\infty}_{G,c}(M)$ is a module over $\mathcal H^{\infty}_{G}(M)$, and in particular on $C^\infty
(\mathfrak g)^G=\mathcal H^{\infty}_{G}(pt)$.

 Proceeding
as in the previous case,  we may define    for
any $G$-stable closed subspace $Z$ of $M$ the cohomology groups $\mathcal H^{\infty}_{G,c}(Z)$. An element in $\mathcal H^{\infty}_{G,c}(Z)$ is thus
represented by an element in ${\mathcal A}^{\infty}_{G,c}(M )$
whose boundary has support in $M\setminus Z$.
We have a natural map $H^*_{G,c}(Z)\to  \mathcal
H^{\infty}_{G,c}(Z)$.

In order to take Fourier transforms, we will need to use yet another space.

Consider the space $\mathcal P^\infty(\mathfrak
g)$ of $C^{\infty}$ functions on $\mathfrak g$ with at most polynomial growth. Equivalently, we  say  that $\mathcal P^\infty(\mathfrak
g)$ consists of functions with moderate growth.
 We may consider the spaces ${\mathcal A}^{\infty,m}_{G,c}(M
)$ consisting of the $G$-equivariant $C^\infty$ maps  with
at most polynomial growth from $\mathfrak g$ to ${\mathcal A}_c(M)$.
The index $m$ indicates the moderate growth  on $\mathfrak g$ of the coefficients.
We  get the cohomology groups $\mathcal
H^{\infty,m}_{G,c}(M)$. This new cohomology has $\mathcal
H^{\infty,m}_{G}(pt)= \mathcal P^\infty (\mathfrak g)^G$ and is a module over  $\mathcal P^\infty (\mathfrak g)^G$.
 We may define in the same way the  groups  $\mathcal H^{\infty,m}_{G,c}(Z)$ of cohomology with compact supports, and with coefficients of at most polynomial growth, for
any $G$-stable closed subspace $Z$ of $M$.

\subsection{Connection forms}
We shall use a fundamental notion in Cartan's theory of equivariant cohomology. Let us recall
\begin{definition}
Given a free action of a compact Lie group $L$ on a manifold $P$, a {\em connection form} is a $L$-invariant one  form  $\omega\in \mathcal A^1(P)\otimes \mathfrak l$  with coefficients in the Lie algebra of $L$ such that $-\iota_x \omega=x$ for all $x\in  \mathfrak l$.

\end{definition}
If on $P$ with free $L$ action we also have a commuting action of another compact group $G$,  it is easy to see that there exists
     a $G\times
L$ invariant connection form $\omega\in \mathcal A^1(P)\otimes \mathfrak l$  on $P$ for the free action of $L$.

Let $M=P/L$  and $y\in \mathfrak g$.
Define
the curvature $R $  and the $G$-equivariant curvature $R_y $ of the bundle
$P\to M$ by \begin{equation}
\label{ledc} R:=d\omega+\frac{1}{2}[\omega,\omega],\quad R_y:=-i_y\omega+R.
\end{equation}
\begin{example}\label{gecc}
Consider $L=G$ and $P=G$ with left and right action. A connection form for the right action can be constructed as follows.  Each element $x$ of the Lie algebra of $G$ defines the  vector field   $v_x$   by right action. These are left invariant vector fields.
Given a basis $e_1,\ldots,e_r$ of  $\mathfrak g$,  set $v_i:=v_{e_i}$.  This determines a dual basis and correspondingly left invariant forms $\omega_i$ with $i_{v_j}(\omega_i)=\langle \omega_i\,|\,v_{ j}\rangle=\delta^i_j$ so that $-\sum_i\omega_ie_i$ is a connection form for the right action.
 %It is convenient to further normalize our choice of the basis $e_i$ so that the $\omega_i$ satisfy the normalization property of Example \ref{ts} and determine the normalized Haar measure $V_\omega$  (cf. Formula  \eqref{liofo1}). We then say that the basis $e_i$ is {\em unimodular}.
%

This form is also left invariant and $R=0$,  so by \eqref{ledc}  the equivariant curvature is  $-i_y\omega$ where now  $i_y$ is associated to the left action. We then have
\begin{equation}
\label{cugu}R_y(g)=-\sum_ii_y(\omega_i)(g)e_i=-g^{-1}y.
\end{equation}
 \end{example}

 \bigskip

 The
equivariant Chern-Weil homomorphism  (\cite{ber-ver83},\cite{bot-tu}, see \cite{ber-get-ver})  associates to any
  $L$ invariant smooth function
  $a$ on $\l$ a closed $G$-equivariant form, with $C^{\infty}$-coefficients as in \S \ref{infcc}, denoted by $y\to a(R_y)$, on $M=P/L$.

  The  formula for this form is obtained via the Taylor series of the function $a$ as follows.  Choose a basis $e_j,\ j=1,\ldots,r$ of $\mathfrak l$ and write $R=\sum_j R_j\otimes e_j$. For a multi--index $I:=(i_1,\ldots,i_r)$, denote by $R^I:=\prod_{j=1}^r R_j^{i_j}$.  Then,  given a point $p\in P$, we set
 \begin{definition}\label{ary}
   \begin{equation}
\label{aryp}a(R_y)(p):=a(-i_y\omega+R)=a(-\iota_y\omega(p))+\sum_{I} {\partial_I}a (-\iota_y\omega(p))\frac{R^{I} }{I!}
\end{equation}
  which is  a finite sum since $R$ is a nilpotent element.
\end{definition} One easily verifies that this is independent of the chosen basis.
  Moreover one can prove (as in the construction of ordinary characteristic classes) the following proposition.
\begin{proposition}(\cite{ber-ver83},\cite{bot-tu}, see \cite{ber-get-ver})
 The differential form $a(R_y)$
  is the pull back of a $G$-equivariant closed form (still denoted by $a(R_y)$) on $M=P/L$.
  Its cohomology class in $\mathcal H^{\infty}_{G}(M)$ is independent of the choice of the connection.
\end{proposition}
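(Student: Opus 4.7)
The plan is to establish four things in sequence: that the formula is basis-independent, that $a(R_y)$ is $L$-basic and hence descends to a form on $M=P/L$, that it is $G$-equivariantly closed, and that its class in $\mathcal H^{\infty}_G(M)$ is independent of the connection $\omega$. Throughout, I will exploit the fact that the curvature $R$, having positive form degree, is nilpotent in the graded commutative algebra $\l\otimes\mathcal A(P)$, so the formal Taylor sum \eqref{aryp} is always finite.

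First, for basis independence: \eqref{aryp} is the intrinsic Taylor substitution $a(\xi_0+\eta)=\sum_I\eta^I(\partial_I a)(\xi_0)$ with $\xi_0=-\iota_y\omega(p)\in\l$ a scalar and $\eta=R(p)\in\l\otimes\mathcal A^2(P)$ a nilpotent increment, and this substitution does not depend on a choice of basis of $\l$. For descent to $M$, I check $L$-invariance and $L$-horizontality. Invariance follows from that of $\omega$ and $a$. For horizontality, fix $x\in\l$: since the $L$- and $G$-actions commute, the infinitesimal generators $v_x$ and $v_y$ commute, so $\iota_x\iota_y=-\iota_y\iota_x$, giving $\iota_x(\iota_y\omega)=-\iota_y(\iota_x\omega)=-\iota_y(-x)=0$ because $-x$ is a constant $\l$-valued $0$-form; together with the standard horizontality $\iota_x R=0$ of the curvature this yields $\iota_x R_y=0$. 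Since the Taylor coefficients $(\partial_I a)(-\iota_y\omega(p))$ are $0$-forms annihilated by $\iota_x$, Leibniz gives $\iota_x a(R_y)=0$, so $a(R_y)$ descends to a form on $M$.

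For $G$-equivariant closedness, let $D=d-\iota_y$. Using $\iota_y^2=0$, the Cartan formula $L_y=d\iota_y+\iota_y d$, the Bianchi identity $dR=-[\omega,R]$, and the $G$-invariance of $\omega$ (hence $L_y\omega=0$), a direct computation shows $DR_y=-[\omega,R_y]$ as an $\l$-valued form on $P$. Applying the chain rule to \eqref{aryp}, $Da(R_y)$ becomes a sum of terms of the form (component of $\omega$)$\cdot\partial_\eta a$ evaluated on commutators $[\eta,R_y]$, all of which vanish by the infinitesimal $L$-invariance of $a$: the identity $\frac{d}{dt}a(\mathrm{Ad}_{\exp(t\eta)}\xi)\big|_{t=0}=0$ says $(\partial_{[\eta,\xi]}a)(\xi)=0$, and this relation survives the formal substitution $\xi\mapsto R_y$ because $R_y$ is nilpotent.

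Independence of the connection follows from the standard Chern-Weil transgression: given two $(G\times L)$-invariant connections $\omega_0,\omega_1$, the convex combination $\omega_t=(1-t)\omega_0+t\omega_1$ is a connection for each $t$, and a bookkeeping argument produces an explicit form $\beta_t$ on $P$ with $\frac{d}{dt}a(R_y^{(t)})=D\beta_t$, so integration yields $a(R_y^{(1)})-a(R_y^{(0)})=D\!\int_0^1\beta_t\,dt$. I expect the main obstacle to be step (iii): the careful sign bookkeeping needed to derive $DR_y=-[\omega,R_y]$ before invoking $L$-invariance of $a$, and verifying that the chain-rule manipulation is valid for smooth (rather than polynomial) invariant $a$. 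Once that identity is established, the remaining steps are standard Chern-Weil manipulations in the Cartan model, with nilpotency of $R$ keeping every sum finite.
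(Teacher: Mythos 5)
Your proof is correct and follows the standard equivariant Chern--Weil argument that the paper defers to: the paper itself gives no proof, but simply cites \cite{ber-ver83}, \cite{bot-tu}, and \cite{ber-get-ver} and remarks that the statement is proved ``as in the construction of ordinary characteristic classes.'' Your four-step structure (basis independence of the formal Taylor substitution; $L$-basicness via $\iota_x R_y=0$; equivariant closedness via the equivariant Bianchi identity $DR_y=-[\omega,R_y]$ combined with infinitesimal $L$-invariance of $a$, which survives the formal substitution $\xi\mapsto R_y$ because $R$ is nilpotent; connection independence via transgression along the affine path $\omega_t$) is exactly the argument in those references, adapted to the Cartan-model differential $D=d-\iota_y$. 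One small inaccuracy worth correcting: the relation $\iota_x\iota_y=-\iota_y\iota_x$ is an identity of contractions that holds for \emph{any} two vector fields (the interior products are odd antiderivations, so their anticommutator is a degree $-2$ derivation vanishing in degrees $0$ and $1$, hence zero); it does not require the $G$- and $L$-actions to commute. The commutativity of the actions is of course still needed elsewhere, e.g.\ so that a $(G\times L)$-invariant connection exists and so that $R_y$ is $L$-equivariant.
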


\section{ Definition of the infinitesimal index}

 \subsection{Infinitesimal index}

As before,  consider  a compact Lie group $G$ and  a $G$-manifold $M$
equipped with an action form $\sigma$. We assume $M$
oriented. Let  $\mu:=\mu_\sigma:M\to  \mathfrak g^*$ be the
corresponding moment map  given by \eqref{moma}.

Set $$ M^0_G:=\mu^{-1}(0),\hspace{0.5cm} U:=M\setminus M^0_G.$$

We simply denote $M^0_G$ by $M^0$ when the group $G$ is fixed.

 Consider the equivariant form
$$\Omega:=d\sigma+\mu=D\sigma.$$

Let  $\mathcal D'( \mathfrak g^*)$ be the space of distributions
on $\mathfrak g^*$. It is a $S[\mathfrak g^*]$-module
 where $\mathfrak g^*$ acts as derivatives.
 When $G$ is noncommutative,
  we need to work with  the space  $\mathcal D'( \mathfrak g^*)^G$  of $G$-invariant distributions.

By Lemma \ref{due}, a representative of  a class $[\alpha] \in
H^*_{G,c}(M^0)$ is a form $\alpha \in [S( \mathfrak g^*)\otimes
\mathcal A_{c}(M)]^G$
  such that
$D\alpha $ is compactly supported in $U$.\smallskip

Let us define a map called the infinitesimal index
$${\rm infdex}_G^{\sigma}:H_{G,c}^*(M^0)\to \mathcal D'( \mathfrak g^*)^G$$
as follows.

We fix a  translation invariant  Lebesgue measure $d\xi$ on $\mathfrak g^*$. We choose a square root $i$ of $-1$ and define
the Fourier transform:
$$\hat f(x):= \int_{\mathfrak g^*}e^{-i\langle \xi\,|\,x\rangle}f(\xi)d\xi.$$
We normalize  $dx$  on $\mathfrak g$ so that  the inverse Fourier
transform is
\begin{equation}\label{inve}
f(\xi)=\int_{\mathfrak g}e^{i\langle \xi\,|\, x\rangle}\hat
f(x)dx.\end{equation} \vskip10pt The measure $dx d\xi$ is
independent of the choice of $d\xi$.
%\begin{definition}
%In order to avoid annoying constants in the definitions unless specified we further normalize $d\xi$ to be the invariant measure  described in Example \ref{ts}.
%\end{definition}

\medskip

Let $f(\xi)$ be a $C^\infty$ function on $ \mathfrak g^*$ with
compact support in  a ball $B_R$ of radius $R$ in $ \mathfrak g^*$ (for a choice of Euclidean structure on $\g^*$)
and $\hat f(x)$ its Fourier transform, a rapidly decreasing
function on $ \mathfrak g$.

Consider the differential form on $M$ depending on a parameter $s$:
$$\Psi(s,\alpha,f)=\int_{\mathfrak g}   e^{i s\Omega(x)}  \alpha (x) \hat f(x) dx,$$
and define
\begin{equation}
\label{infsas}\langle  {\rm infdex}(s,\alpha,\sigma), f\rangle :=
\int_{M}\int_{\mathfrak g} e^{i s\Omega(x )} \alpha(x )  \hat f(x
)  dx
\end{equation} $$= \int_M\Psi(s,\alpha,f) .$$ This double integral on $M\times
\mathfrak g$  is absolutely convergent, since $\alpha$ is
compactly supported on $M$ and depends polynomially on $x$ while
$\hat f(x )$ is rapidly decreasing.

More precisely, write $\alpha(x) =\sum_{a=1}^R P_a(x) \alpha^a$
  with
$\alpha^a$ compactly supported forms on $M$ and $P_a(x)$
polynomial functions of $x$. Then
$$\Psi(s,\alpha,f)(m)=
\sum_a\Big[ \int_{\mathfrak g} \hat f(x) P_a(x)e^{i s \langle
\mu(m),x\rangle } dx \Big] e^{i sd\sigma}\alpha^a .$$

By Fourier inversion (as in (\ref{inve}))
\begin{equation}
\label{treo}\int_{\mathfrak g}
\hat f(x) P_a(x)e^{i s \langle \mu(m),x\rangle } dx =
(P_a(-i\partial)  f)  (s\mu(m)),
\end{equation} thus

\begin{equation}\label{tre}\Psi(s,\alpha,f)= \sum_a((P_a(-i\partial)  f)\circ (s\mu) )
e^{i sd\sigma}\alpha^a.
\end{equation}

In particular, remark that  $\Psi(s,\alpha,f)$ does not depend of
the choice of $d\xi$. Another consequence of this analysis is
\begin{proposition}\label{ilsupd}
Let  $K\subset M$ be the support of $\alpha$ and $C\subset\mathfrak g^*$  the support of $f$.

The support of $\Psi(s,\alpha,f)  $ is contained in $K\cap \mu^{-1}(C/s)$. In particular,  if $s\mu(K)\cap C=\emptyset$, then $\Psi(s,\alpha,f)=0.$
\end{proposition}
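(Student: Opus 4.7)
The plan is to read off the conclusion directly from the explicit formula \eqref{tre} for $\Psi(s,\alpha,f)$, using only the elementary fact that a constant-coefficient differential operator does not enlarge the support of a smooth compactly supported function.

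First I would fix a decomposition $\alpha(x)=\sum_a P_a(x)\alpha^a$ with $\alpha^a\in\mathcal A_c(M)$ and $P_a$ polynomial, as in the paragraph preceding the statement, so that \eqref{tre} gives
\begin{equation*}
\Psi(s,\alpha,f) \;=\; \sum_a \bigl((P_a(-i\partial)f)\circ(s\mu)\bigr)\,e^{i s\,d\sigma}\alpha^a .
\end{equation*}
Since each $\alpha^a$ is supported in $K$, the entire form $\Psi(s,\alpha,f)$ vanishes outside $K$, so it suffices to check vanishing at an arbitrary point $m\in K$.

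Next I would use that $P_a(-i\partial)$ is a differential operator with constant coefficients, hence
\begin{equation*}
\operatorname{supp}\bigl(P_a(-i\partial) f\bigr)\;\subset\;\operatorname{supp}(f)\;=\;C .
\end{equation*}
Therefore $(P_a(-i\partial)f)(s\mu(m))=0$ whenever $s\mu(m)\notin C$, for every index $a$.

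Finally, by hypothesis $s\mu(K)\cap C=\emptyset$, so for every $m\in K$ we have $s\mu(m)\notin C$ and each scalar factor $(P_a(-i\partial)f)(s\mu(m))$ vanishes. Summing, $\Psi(s,\alpha,f)(m)=0$ for all $m\in K$, and combined with the vanishing off $K$ this yields $\Psi(s,\alpha,f)\equiv 0$. There is no real obstacle here; the only thing to be careful about is to cite \eqref{tre} rather than trying to manipulate the oscillatory double integral directly, since the key locality property is only visible after Fourier inversion in the $x$-variable has been carried out.
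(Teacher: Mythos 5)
Your argument is correct and is precisely the reasoning the paper has in mind: it states the proposition as ``another consequence of this analysis,'' i.e.\ as an immediate corollary of formula \eqref{tre} together with the elementary fact that the constant-coefficient operator $P_a(-i\partial)$ does not enlarge the support of $f$. The only minor point worth noting is that ``each $\alpha^a$ is supported in $K$'' holds once the $P_a$ are chosen linearly independent (a monomial basis suffices); alternatively, the vanishing of $\Psi(s,\alpha,f)$ off $K$ is already apparent from the defining integral $\int_{\mathfrak g} e^{is\Omega(x)}\alpha(x)\hat f(x)\,dx$ since $\alpha(x)$ itself vanishes there.
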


\bigskip

 Given $s> 0$, set $V_s=\mu^{-1}(B_{R/s}).$
We can then  choose some $s_0>>0$ so large that the restriction of
$\alpha $  to the  small neighborhood  $ V_{s_0}$ of  $M^0$ is
equivariantly closed. This is possible since $D\alpha $ has a
compact  support $K$ in $U=M\setminus M^0$ so that, if $\rho:=\min_{m\in
K}\|\mu(m)\|>0$, it suffices to take $s_0>R/\rho$.

We have $(P_a(-i\partial)  f)  (s\mu(m))=0$  if $\|s\mu(m)\|>R\iff
\| \mu(m)\|>R/s.$ Thus we see that, for $s\geq s_0$, if $K$ is the
support of $\alpha$, $\Psi(s,\alpha,f)$  has compact support
contained in $V_s\cap K$.

We have then  the formula:
\begin{equation}
\label{intind} \langle  {\rm infdex}(s,\alpha,\sigma), f\rangle =
\int_{M} \Psi(s,\alpha,f)=\int_{V_s} \Psi(s,\alpha,f).
\end{equation}

Note that from Formula (\ref{intind})  follows the
\begin{lemma}\label{lem:support}
\label{sss} If $\alpha$ has support in $U$  then,  for $s$ large,
 $\Psi(s,\alpha,f)=0$.
\end{lemma}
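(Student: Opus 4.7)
The plan is to invoke Proposition \ref{ilsupd} directly: it suffices to produce an $s_0 > 0$ such that $s\mu(K) \cap C = \emptyset$ for every $s > s_0$, where $K \subset M$ denotes the support of $\alpha$ and $C \subset \mathfrak{g}^*$ the support of $f$.

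Both $K$ and $C$ are compact: $K$ because a representative of a class in $H_{G,c}^*(M^0)$ is by definition a compactly supported equivariant form on $M$, and $C$ because by hypothesis $C \subset B_R$. The assumption that $\alpha$ is supported in $U = M \setminus M^0$ means precisely that the continuous function $m \mapsto \|\mu(m)\|$ is strictly positive at every point of $K$. Compactness of $K$ then upgrades this to a uniform positive lower bound
$$\rho := \min_{m \in K}\|\mu(m)\| > 0.$$

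For any $s > R/\rho$, every point of $s\mu(K)$ has norm at least $s\rho > R$, so $s\mu(K)$ is disjoint from the ball $B_R$ and \emph{a fortiori} from $C \subset B_R$. Proposition \ref{ilsupd} then yields $\Psi(s,\alpha,f) = 0$ for all such $s$, which is exactly the claim. The whole argument rests on the compactness of $K$, which is what allows $\|\mu\|$ to be bounded away from zero on the support of $\alpha$; without that uniform bound, the dilation by $s$ could fail to push $\mu(K)$ clear of $B_R$. There is no real obstacle beyond making this observation.
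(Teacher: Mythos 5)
Your proof is correct and is essentially the argument the paper has in mind: the paper derives the lemma from the observation (recorded just before Formula~\eqref{intind}, and equivalently packaged in Proposition~\ref{ilsupd}) that $\Psi(s,\alpha,f)$ is supported in $V_s\cap K$ with $V_s=\mu^{-1}(B_{R/s})$, together with the positivity of $\rho=\min_{m\in K}\|\mu(m)\|$ coming from compactness of $K\subset U$. Invoking Proposition~\ref{ilsupd} with $s>R/\rho$ is just a direct rephrasing of that same support estimate, so there is no real difference in approach.
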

%$$D\Psi(s,\alpha,f)=\int_{\mathfrak g} \hat f(x) D \alpha (x) e^{i s\Omega(x)} dx$$

  We will often make use of the following lemma.

\begin{lemma}\label{lem:dds}
We have
$$-i\frac{d}{ds}\int_M  \int_{\mathfrak g} e^{i s\Omega(x )}
\alpha(x ) \hat f(x)  dx=
 \int_M  \int_{\mathfrak g}   \sigma e^{i s\Omega(x )}
 D(\alpha)(x ) \hat f(x)  dx. $$
\end{lemma}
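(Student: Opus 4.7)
The plan is to differentiate in $s$ under the double integral and then rewrite the resulting factor $\Omega(x)$ using the Leibniz rule for $D$, exploiting that $\Omega=D\sigma$ is equivariantly closed.

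Since $\alpha(x)$ is compactly supported in $M$ and polynomial in $x$, and $e^{is\Omega(x)}$ is polynomial in $s,\Omega(x)$ (because $\Omega$ is a nilpotent form on $M$ and its $\mathfrak{g}^*$-component is linear in $x$), while $\hat f(x)$ is rapidly decreasing, the integrand and its $s$-derivative are absolutely integrable. Dominated convergence then permits differentiation under both integrals to give
$$-i\frac{d}{ds}\int_M\int_{\mathfrak g} e^{is\Omega(x)}\alpha(x)\hat f(x)\,dx \;=\;\int_M\int_{\mathfrak g}\Omega(x)\,e^{is\Omega(x)}\alpha(x)\hat f(x)\,dx.$$

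Next, $D\Omega=D^2\sigma=0$, so $D\bigl(e^{is\Omega(x)}\bigr)=0$. Because $\sigma$ is an ordinary $1$-form, the graded Leibniz rule for $D=d-\iota_x$ yields, for each $x$,
$$D\bigl(\sigma\,e^{is\Omega(x)}\alpha(x)\bigr)=D\sigma(x)\,e^{is\Omega(x)}\alpha(x)-\sigma\,e^{is\Omega(x)}D\alpha(x),$$
and therefore
$$\Omega(x)\,e^{is\Omega(x)}\alpha(x)=D\bigl(\sigma\,e^{is\Omega(x)}\alpha(x)\bigr)+\sigma\,e^{is\Omega(x)}D\alpha(x).$$

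The final step is to note that for every compactly supported equivariant form $\beta(x)$ on $M$ one has $\int_M D\beta(x)=0$ for each $x$: the $d$-piece contributes $\int_M d\beta_{\mathrm{top}-1}=0$ by Stokes (compact support, no boundary), while the $\iota_x$-piece contributes nothing since $\iota_x\beta$ has no form component of top degree on $M$. Applied to $\beta(x)=\sigma\,e^{is\Omega(x)}\alpha(x)$ and integrated against $\hat f(x)\,dx$, the $D$-exact term drops out, leaving exactly the right-hand side of the lemma.

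The one delicate point is the sign bookkeeping in the Leibniz identity: one must treat $\sigma$ as odd (ordinary degree $1$) and $e^{is\Omega}$ as even and $D$-closed so that it may be passed through factors without an extra sign. Once this is observed, the lemma reduces to $D$-closedness of $e^{is D\sigma}$ together with the Stokes-type vanishing of $\int_M D(\,\cdot\,)$.
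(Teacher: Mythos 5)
Your proof is correct and follows essentially the same route as the paper's: differentiate under the integral to bring down a factor of $\Omega(x)=D\sigma(x)$, split $D\sigma(x)\,e^{is\Omega(x)}\alpha(x)$ via Leibniz into a $D$-exact piece plus $\sigma\,e^{is\Omega(x)}D\alpha(x)$ using $D(e^{is\Omega})=0$, and kill the $D$-exact piece by Stokes since $\alpha$ is compactly supported. Your extra remarks on the sign bookkeeping and on why $\int_M D(\cdot)=0$ are accurate and simply make explicit what the paper leaves implicit.
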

 \begin{proof}
Indeed, since $\Omega(x)=D\sigma(x)$,
$$-i\frac{d}{ds}\int_M  \int_{\mathfrak g} e^{i s\Omega(x )}
\alpha(x ) \hat f(x)  dx=
 \int_M \int_{\mathfrak g}  D\sigma(x)e^{i s\Omega(x)}
\alpha (x)   \hat f(x) dx$$
$$= \nu+r$$
with $$\nu=\int_{\mathfrak g} \int_M D\left(  \sigma e^{i
s\Omega(x )} \alpha(x ) \right)\hat f(x) dx$$
 and
$$r= \int_M  \int_{\mathfrak g}   \sigma e^{i s\Omega(x )} D( \alpha)(x ) \hat f(x)  dx
 $$
since $D(\Omega)=0$ and $D$ is a derivation, we have $D(e^{i
s\Omega(x)} )=0$.

As $\alpha(x)$ is compactly supported, $\nu=0$, and we obtain the
lemma.
\end{proof}

Let us see that

$$\langle  {\rm infdex}(s,\alpha,\sigma), f\rangle =
\int_M\int_{\mathfrak g} e^{i s\Omega(x )} \alpha(x )  \hat f(x )
dx $$ does not depend of the choice of  $s\geq s_0$.

We use Lemma \ref{lem:dds} above to compute $\frac{d}{ds}\langle
{\rm infdex}(s,\alpha,\sigma), f\rangle$. By the hypotheses made, the form $\sigma D\alpha$ has compact support in $U$, thus by Lemma
\ref{lem:support}   the differential form
$\Psi(s,\sigma D\alpha,f)=\int_{\mathfrak g} \sigma e^{i s\Omega(x
)} D( \alpha)(x ) \hat f(x)  dx$   is identically  equal to $0$
for $s\geq s_0$. This implies that for $s\geq s_0$
$$\frac{d}{ds}\langle  {\rm infdex}(s,\alpha,\sigma), f\rangle =0,$$
hence the independence of the choice of  $s\geq s_0$.

\bigskip

We now see the   independence on the choice of the representative
$\alpha $. In fact, take a different representative $\alpha+\beta$ with $\beta$  compactly supported on $U$.  Then
$$\lim_{s\to\infty} \langle \infdex(s,\beta,\sigma), f\rangle =0$$
 by Lemma \ref{lem:support}.

Next let us show that $\lim_{s\to\infty}\langle \infdex(s,\alpha,\sigma), f\rangle $ depends only on the cohomology class of $\alpha$.   Take  $\alpha=D\beta$,  with $\beta$ compactly supported on
$M$, we see that
$$ \langle \infdex(s, \alpha,\sigma), f\rangle =\int_{M}\int_{\mathfrak g}
e^{i s\Omega(x)} D\beta(x) \hat f(x)dx$$
$$=\int_{\mathfrak g}\int_{M} D\left(  e^{i s\Omega(x)}\beta(x)\right) \hat f(x)dx=0.$$

\bigskip

Finally, let  us consider two action forms $\sigma_1,\sigma_0$, with $\sigma_0=\sigma$.
Then the moment map for $\sigma_t=t\sigma_1+(1-t)\sigma_0$  is $\mu_t=t\mu_1+(1-t)\mu_0$, with $\mu_0=\mu$. We  assume that the closed set $\mu_t^{-1}(0)$ remains equal to $M^0$, for $t\in [0,1]$.
Let us see that
$\infdex(s,
\alpha,\sigma_1)=\infdex(s,\alpha,\sigma_0)$, for $s$ large.

Indeed, consider $\Omega(t)=D\sigma_t.$
Let $$I(t,s)=\int_M\int_{\mathfrak g} e^{i s\Omega(t,x)}
\alpha(x)\hat f(x)   dx.$$
 We obtain
$$-i\frac{d}{dt}I(t,s)=
s\int_M \int_{\mathfrak g} D(\sigma_1-\sigma_0)(x)e^{i
s\Omega(t,x)} \alpha(x)\hat f(x)   dx$$
$$= \nu+r$$
with $$\nu=s\int_{\mathfrak g} \int_M D\left( (\sigma_1-\sigma_0)
e^{i s\Omega(t,x )} \alpha(x )\right) \hat f(x) dx$$
 and
$$r= s\int_M  \int_{\mathfrak g}  (\sigma_1-\sigma_0)
e^{i s\Omega(t,x )} D( \alpha)(x )\hat f(x) dx .$$

As $\alpha(x)$ is compactly supported, $\nu=0$.

As for $r$, we remark that $\Omega(t,x)=\langle \mu_t,x\rangle +
q(t)$ where $q(t)$ is a two form. The integral $r$ involves the
value of $f$, and its derivatives,  at the points $s\mu_t(m)$.
As the compact support $K$ of $D\alpha$ is disjoint from $M^0$, our assumption implies that $\mu_t(m)$ is never equal to $0$ for $m\in K$ and $t\in [0,1]$. Thus, if $\rho:=\min_{m\in
K,t\in [0,1]}\|\mu_t(m)\|>0$, arguing as for Formulas \eqref{treo} and \eqref{tre}  we deduce that $r=0$ if we take  $s_0>R/\rho$.

\bigskip

One has still to verify that this linear map satisfies the
continuity properties that make it a  distribution. We leave this
to the reader.

In conclusion we have shown

\begin{theorem}\label{dein}
Let $\sigma$ be an action form with moment map $\mu$. Let $M^0=\mu^{-1}(0)$.
 Then we can
define a map
$$ \infdex_G^{\sigma}:H^*_{G,c}(M^0)\to \mathcal D'(\mathfrak g^*)^G$$
setting for any $[\alpha]\in H^*_{G,c}(M^0)$ and for any   smooth
function with compact support $f$ on $\mathfrak g^*$
$$\langle \infdex_G^{\sigma}([\alpha]),f\rangle:= \lim_{s\to \infty}
\int_{M}\int_{\mathfrak g}  e^{i s\Omega(x )}  \alpha(x ) \hat f(x
)dx .$$

The map $\infdex_G^{\sigma}$ is a well defined homomorphism of
$S[\mathfrak g^*]^G$ modules.

If the one form $\sigma$ moves along a smooth curve   $\sigma_t$ with moment map $\mu_t$  such that $\mu_t^{-1}(0)$ remains equal to $M^0$ , then
$$\infdex_G^{\sigma_t}=\infdex_G^{\sigma}.$$

\end{theorem}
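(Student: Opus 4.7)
The plan is to split the theorem into four claims and establish them in order: (a) the defining limit exists; (b) it depends only on the cohomology class $[\alpha]$; (c) the resulting functional is a distribution and the map is $S[\mathfrak g^*]^G$-linear; (d) it is invariant under the deformation $\sigma_t$. The author has already set up the essential lemmas (\ref{lem:dds}, \ref{lem:support}, \ref{ilsupd}), so the task is to assemble them cleanly.

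For (a), any representative $\alpha$ has $D\alpha$ compactly supported in $U$, so $\rho:=\min_{m\in\mathrm{supp}(D\alpha)}\|\mu(m)\|>0$. For $f$ supported in $B_R$ and $s>R/\rho$, Proposition \ref{ilsupd} applied to $\sigma D\alpha$ yields $\Psi(s,\sigma D\alpha,f)=0$, so Lemma \ref{lem:dds} forces $\tfrac{d}{ds}\langle\infdex(s,\alpha,\sigma),f\rangle=0$ for $s$ large and the limit equals the stabilized value. For (b), modifying $\alpha$ by $\beta$ compactly supported in $U$ contributes $0$ for large $s$ by the same support argument; modifying by $D\gamma$ with $\gamma$ compactly supported gives $\int_{\mathfrak g}\hat f(x)\int_M D(e^{is\Omega(x)}\gamma(x))\,dx=0$ by Stokes, since $D(e^{is\Omega(x)})=0$ and $\gamma$ has compact support.

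For (c), the $S[\mathfrak g^*]^G$-linearity is essentially Fourier duality: inserting an invariant polynomial $P(\mu)$ into $\alpha$ corresponds under the inner $\mathfrak g$-integral to $P(-i\partial_x)\hat f$, which by (\ref{inve}) is the Fourier transform of $P(\xi)f(\xi)$, giving exactly the action of $P$ on the distribution. Continuity in $f$ follows from (\ref{tre}): $\Psi(s,\alpha,f)$ is a finite sum of terms $(P_a(-i\partial)f)(s\mu(m))\,e^{isd\sigma}\alpha^a$ supported in $V_s\cap\mathrm{supp}(\alpha)$, so for fixed large $s$ the integral is dominated by finitely many $C^k$-seminorms of $f$ on $B_R$, which is precisely the continuity required to be a distribution.

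For (d), I would differentiate $I(t,s)=\int_M\int_{\mathfrak g}e^{is\Omega(t,x)}\alpha(x)\hat f(x)\,dx$ in $t$ and integrate by parts in the form variable exactly as in Lemma \ref{lem:dds}, obtaining a remainder $r=s\int_M\int_{\mathfrak g}(\sigma_1-\sigma_0)e^{is\Omega(t,x)}D\alpha(x)\hat f(x)\,dx$. The assumption $\mu_t^{-1}(0)=M^0$ for all $t\in[0,1]$, together with the compactness of $\mathrm{supp}(D\alpha)$ in $U$, makes $\rho':=\min_{m,t}\|\mu_t(m)\|>0$ uniformly in $t$; this uniformity is where I expect the main technical subtlety, since it requires joint continuity of $(m,t)\mapsto\mu_t(m)$ on a compact set in $\mathrm{supp}(D\alpha)\times[0,1]$. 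Once it is established, the same localization argument forces $r=0$ for $s>R/\rho'$, hence $I(0,s)=I(1,s)$ stabilizes independently of $t$ and $\infdex_G^{\sigma_0}=\infdex_G^{\sigma_1}$.
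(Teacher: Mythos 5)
Your proposal is correct and follows essentially the same route as the paper: it assembles Lemma \ref{lem:dds}, Lemma \ref{lem:support}, and Proposition \ref{ilsupd} to show the integral stabilizes for large $s$, treats cocycle modifications ($\beta$ supported in $U$, and exact $D\beta$) by the support argument and Stokes, and establishes the homotopy invariance by differentiating in $t$, integrating by parts, and bounding $\rho=\min_{K\times[0,1]}\|\mu_t\|>0$ away from zero. The only difference is that you explicitly spell out the $S[\mathfrak g^*]^G$-linearity via Fourier duality and the continuity estimate from \eqref{tre}, both of which the paper leaves to the reader; and your worry about "joint continuity of $(m,t)\mapsto\mu_t(m)$" is not really a subtlety, since $\mu_t$ is linear (more generally smooth) in $t$ with $\sigma_t$ a smooth curve of forms, so the joint continuity on the compact $K\times[0,1]$ is immediate.
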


In particular, if two action forms $\sigma_1,\sigma_2$ have same moment map $\mu$, the two infinitesimal indices $\infdex_G^{\sigma_1}$ and $\infdex_G^{\sigma_2}$ coincide.
Indeed, the moment map $\mu_t$  associated to $(1-t)\sigma_1+t\sigma_2$ is constant.
In view of this property, we denote simply by
$\infdex_G^{\mu}$ the map $\infdex_G^{\sigma}$. We call it the infinitesimal index map associated to $\mu$, or, for short,  the infdex map.

\begin{remark}
In general, the maps  $\infdex_G^{\mu}$ and $\infdex_G^{-\mu}$ are different (cf. Example \ref{variesemp}), although the zeroes of the moment maps associated to $\sigma$ and $-\sigma$ are the same.
Thus the stability condition that the set $\mu_t^{-1}(0)$ remains constant, when moving along $\sigma_t$,  is essential in order to insure the independence of the infinitesimal index.
\end{remark}

\bigskip

Let us give another formula for  $\infdex_G^{\mu}$. From this
formula, it will be clear that $\infdex_G^{\mu}$ belongs to the space $\mathcal S'(\g^*)^G$ of invariant tempered
distributions on $\mathfrak g^*$.

Let $f$ be a Schwartz function  on $\mathfrak g^*$. If $\alpha$ is
a representative of $[\alpha]\in H^*_{G,c}(M^0)$, we see that
$\int_{\mathfrak g} e^{i s \Omega(x)} (D\alpha)(x) \hat f(x) dx $
is a rapidly decreasing function of $s$: $D\alpha$ being
identically equal to $0$ on a neighborhood of $M^0$, this  is
expressed in terms of the value of the function $f$, and its
derivatives, at  points  $s\mu(m)$, where $\mu(m)$ is nonzero.
Thus we can define the compactly supported differential form
$\Phi(\alpha,f)$ on $M$ by

\begin{equation}\label{eq:ints}
\Phi(\alpha,f):= \int_{ \mathfrak g}\alpha(x) \hat f(x) dx+ i
\sigma \int_{s=0}^{\infty}\left(\int_{ \mathfrak g} e^{is
\Omega(x)} D\alpha(x) \hat f(x) dx\right)ds.
\end{equation}

\begin{proposition}\label{extend}
 We have
$$\langle \infdex_G^{\mu}(\alpha),f\rangle=  \int_{M} \Phi(\alpha,f) .$$
\end{proposition}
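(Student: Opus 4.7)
The plan is to derive the formula by integrating the $s$-derivative from Lemma \ref{lem:dds} against the fundamental theorem of calculus, using the existence of the limit guaranteed by Theorem \ref{dein} as the boundary value at infinity and the trivial identity $e^{i \cdot 0 \cdot \Omega(x)} = 1$ as the boundary value at zero.

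Concretely, set
$$G(s) := \int_M \int_{\mathfrak g} e^{is\Omega(x)} \alpha(x) \hat f(x)\, dx.$$
By Lemma \ref{lem:dds},
$$\frac{d}{ds} G(s) = i \int_M \int_{\mathfrak g} \sigma\, e^{is\Omega(x)} D\alpha(x) \hat f(x)\, dx.$$
Integrating from $0$ to $S$ and observing that $G(0) = \int_M \int_{\mathfrak g} \alpha(x) \hat f(x)\, dx$, I obtain
$$G(S) = \int_M \int_{\mathfrak g} \alpha(x) \hat f(x)\, dx + i \int_0^S \!\!\int_M \!\!\int_{\mathfrak g} \sigma\, e^{is\Omega(x)} D\alpha(x) \hat f(x)\, dx\, ds.$$
Letting $S \to \infty$, the left-hand side converges to $\langle \infdex_G^{\mu}([\alpha]), f\rangle$ by Theorem \ref{dein}. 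For the right-hand side, I need the inner integral $\int_M \int_{\mathfrak g} \sigma\, e^{is\Omega(x)} D\alpha(x) \hat f(x)\, dx$ to be rapidly decreasing in $s$. This is exactly the point made in the paragraph preceding the proposition: since $D\alpha$ has compact support $K \subset U$, there is $\rho > 0$ with $\|\mu(m)\| \ge \rho$ for $m \in K$; rewriting the $\mathfrak g$-integral via Fourier inversion as in \eqref{treo}--\eqref{tre}, it becomes a sum of terms of the form $(P_a(-i\partial)f)(s\mu(m))\, e^{is\, d\sigma}\, \sigma\, (D\alpha)^a$ integrated over $K$. Since $f$ is Schwartz and $\|s\mu(m)\| \ge s\rho$ on $K$, this is $O(s^{-N})$ for every $N$, giving absolute convergence.

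Finally, the Fubini theorem applied to the jointly integrable integrand on $[0,\infty) \times M \times \mathfrak g$ permits swapping the $s$ and $m$ integrations, yielding
$$\langle \infdex_G^{\mu}([\alpha]), f\rangle = \int_M \!\left( \int_{\mathfrak g} \alpha(x) \hat f(x)\, dx + i\sigma \int_0^\infty \!\!\int_{\mathfrak g} e^{is\Omega(x)} D\alpha(x) \hat f(x)\, dx\, ds \right) = \int_M \Phi(\alpha, f),$$
which is the claimed identity.

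The main obstacle is really only the justification of the rapid decay and of Fubini; the algebraic content is immediate from Lemma \ref{lem:dds}. Everything needed for the decay estimate is already contained in the Fourier inversion argument \eqref{treo}--\eqref{tre} applied to $D\alpha$ in place of $\alpha$, exploiting that $D\alpha$ is supported away from $M^0$, which is precisely why the integral defining $\Phi(\alpha,f)$ makes sense as a compactly supported form on $M$.
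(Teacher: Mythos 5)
Your argument is correct and is essentially the same as the paper's: apply the fundamental theorem of calculus to $G(s)$, use Lemma \ref{lem:dds} to rewrite $\frac{d}{ds}G(s)$, and let $s\to\infty$. The paper's version is terser (it leaves the rapid-decay and Fubini justifications to the paragraph preceding the proposition), but you are invoking exactly the same estimate from \eqref{treo}--\eqref{tre} applied to $D\alpha$, so there is no substantive difference.
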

\begin{proof}
Let $f$ be a function with compact support on $\mathfrak g^*$.
Then $$\lim_{s\to \infty} \int_{M}\int_{\mathfrak g}  e^{i
s\Omega(x )} \alpha(x ) \hat f(x )dx$$ is equal to
 $$\int_{M}\int_{\mathfrak g}
\alpha(x ) \hat f(x )   dx+\int_0^{\infty} \frac{d}{ds}
\left(\int_{M}\int_{\mathfrak g} e^{is \Omega(x)} \alpha(x) \hat
f(x) dx\right) ds.$$ By Lemma \ref{lem:dds}, we  obtain the
proposition.
\end{proof}

\begin{remark}\label{par}
It is possible to define equivariant forms on $M$ with
$C^{-\infty}$ coefficients \cite{Kumar-Vergne}. Such a form is an
equivariant map from test densities on $\mathfrak g$ to
differential forms on $M$. The equivariant differential $D$
extends and we obtain the group $\mathcal H^{-\infty}_G(M)$, and
similarly the group  $\mathcal H^{-\infty}_{G,c}(M).$
 If $\alpha\in H_{G,c}^*(M^0)$, and
$g$ is a test function on $\mathfrak g$, we may define the
differential form
$$(p(\alpha),g dx)=\int_{\mathfrak g}\alpha(x)
g(x)dx+i\sigma \int_{s=0}^{\infty}(\int_{\mathfrak g} e^{i s
\Omega(x)} D\alpha(x)  g(x) dx) ds.
$$

It is easy to see that $p(\alpha)$ is a  compactly supported
equivariant form on $M$ with $C^{-\infty}$ coefficients  such that
$D(p(\alpha))=0$. Indeed,  we have $p(\alpha)=\alpha-\sigma
\frac{D\alpha}{D\sigma}$, where  $\frac{D\alpha}{D\sigma}$ is well
defined  in the distribution sense by $-i\int_{s=0}^{\infty}e^{i s
D\sigma} D\alpha \, ds$.

 We see that $\alpha\mapsto p(\alpha)$ defines a
map from $H_{G,c}^*(M^0)$ to $\mathcal H^{-\infty}_{G,c}(M).$ In
this framework, our distribution $ \infdex_G^{\mu}(\alpha)$ on $\mathfrak g^*$ is the Fourier
transform of the generalized function $\int_M p(\alpha)$ on
$\mathfrak g$.

Associated to  an action form $\sigma$,  Paradan defined a
particular element $P_\sigma \in\mathcal H^{-\infty}_G(M)$
representing $1$ and supported in a neighborhood of $M^0$
\cite{pep1}. This element is the form $p(1)$  defined above (when
$M^0$ is compact). Most of our subsequent theorems could be
obtained by Fourier transforms of Theorems proven in \cite{pep2},
\cite{par-ver2}  where basic functorial
properties of $P_\sigma$ are proved. However, we will work on
$\mathfrak g^*$ instead that on $\mathfrak g$ and  we will give
direct proofs.

\end{remark}

\subsection{Extension of the definition of the infinitesimal index}
 Let us see that  the definition of  the infinitesimal index extends  to $\mathcal H^{\infty,m}_{G,c}(M^0)$.

If $\alpha\in {\mathcal A}^{\infty,m}_{G,c}(M )$  is such that
$D\alpha=0$ in a neighborhood of $M^0$, we see that Lemma
\ref{lem:dds} still holds, $f$ being  a Schwartz function on
$\mathfrak g^*$:
$$-i\frac{d}{ds} \int_{M}\int_{\mathfrak g} e^{is
\Omega(x)} \alpha(x) \hat f(x) dx =\int_{M}\int_{\mathfrak g}
e^{is \Omega(x)} \sigma D\alpha(x) \hat f(x) dx.$$
 Since $\alpha$
is of at most polynomial growth, the function of $x$ given by
$D\alpha(x) \hat f(x)$ is still a Schwartz function of $x$. Thus
by Fourier inversion, we again see that $-i\frac{d}{ds}
\int_{M}\int_{\mathfrak g} e^{is \Omega(x)} \alpha(x) \hat f(x)
dx$ is a rapidly  decreasing function of $s$ and we may define

$$\langle \infdex_G^{\mu}(\alpha),f\rangle =  \lim_{s\to\infty}
\int_{M}\int_{\mathfrak g} e^{i s\Omega(x )}  \alpha(x ) \hat f(x)
dx.$$

We have again the formula:

$$\langle \infdex_G^{\mu}(\alpha),f\rangle = \int_M \Phi(\alpha,f)$$
where $\Phi(\alpha,f)$ is given by Equation (\ref{eq:ints}).

 This formula shows that
$\infdex_G^{\mu}(\alpha)$ is a $G$-invariant tempered distribution
on $\mathfrak g^*$. With similar arguments, we obtain the following theorem.

\begin{theorem}\label{theo:infdexgrowth}
We can define a map
$$ \infdex_G^{\mu}:\mathcal H^{\infty,m}_{G,c}(M^0)\to \mathcal S'(\mathfrak g^*)^G$$
setting for any $[\alpha]\in \mathcal H^{\infty,m}_{G,c}(M^0)$ and
for any  Schwartz function  $f$ on $\mathfrak g^*$
$$\langle \infdex_G^{\mu}([\alpha]),f\rangle:= \lim_{s\to \infty}
\int_{M}\int_{\mathfrak g}  e^{i s\Omega(x )}  \alpha(x ) \hat f(x
)   dx .$$

If $\sigma$ moves smoothly along a curve $\sigma_t$ such that $\mu_t^{-1}(0)$ remains equal to $M^0$, the map
 $\infdex_G^{\mu_t}$ remains constant.

Furthermore, using the Fourier transform $\mathcal F$ of tempered
distributions
\begin{equation}\label{icl}
\mathcal F( {\rm infdex}_G^{\mu}(
[\alpha]))=\lim_{s\to\infty} \int_{M} e^{i s\Omega(x )}  \alpha(x
).
\end{equation}

\end{theorem}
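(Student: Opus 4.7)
The plan is to mirror the argument for Theorem \ref{dein}, replacing polynomial coefficients $\alpha(x)=\sum_a P_a(x)\alpha^a$ by smooth $G$-equivariant maps $\alpha\colon \mathfrak g\to \mathcal A_c(M)$ of at most polynomial growth, and compactly supported test functions $f$ by Schwartz $f$ on $\mathfrak g^*$. The key substitution is: where the old proof exploited that $\hat f$ is compactly supported or rapidly decreasing and $\alpha(x)$ is polynomial, the new proof uses that the product of a smooth function of polynomial growth with a Schwartz function is Schwartz, so $\alpha(x)\hat f(x)$ stays Schwartz in $x$ uniformly on the compact $M$-support of $\alpha$.

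First, absolute convergence of the double integral for each $s$ follows by expanding $e^{is\Omega(x)}=e^{is\langle\mu(m),x\rangle}\sum_k(isd\sigma)^k/k!$ (a finite sum, since $d\sigma$ is a two-form on a finite-dimensional manifold), so that the $x$-integrand is Schwartz in $x$, uniformly in $m$ on a compact set; Fubini then applies. Second, Lemma \ref{lem:dds} remains valid verbatim, since its proof only used that $\alpha$ is compactly supported on $M$ (to kill $\nu$ by Stokes) together with $D(e^{is\Omega})=0$. The decisive estimate is the rapid decay of the derivative: as in Formula~\eqref{treo}, the inner integral $\int_{\mathfrak g} e^{is\Omega(x)}\sigma D\alpha(x)\hat f(x) dx$ can be written, via Fourier inversion, as a finite sum of terms of the form $\sigma (d\sigma)^k\,(\partial^I g)(s\mu(m))$ with $g$ Schwartz on $\mathfrak g^*$ (indeed $g$ is a partial Fourier transform of $\alpha(x)\hat f(x)$, which is Schwartz in~$x$). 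Because $D\alpha$ vanishes on a neighbourhood of $M^0$, we have $\|\mu(m)\|\geq \rho>0$ on the compact support of $D\alpha$, and each $(\partial^I g)(s\mu(m))$ decays faster than any inverse polynomial in $s$ uniformly there. Hence $\tfrac{d}{ds}\langle \infdex(s,\alpha,\sigma),f\rangle$ is rapidly decreasing, the limit exists, and integrating from $s$ to $\infty$ yields the identity
\begin{equation*}
\langle \infdex_G^\mu(\alpha),f\rangle = \int_M \Phi(\alpha,f)
\end{equation*}
of Proposition \ref{extend}. Continuity of $f\mapsto \int_M\Phi(\alpha,f)$ in the Schwartz topology (everything is built from Schwartz-valued operations on $\hat f$ and finitely many $m$-integrals over compact sets) gives the tempered distribution property, and $G$-invariance is automatic. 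Independence of the representative, vanishing on $D\beta$ via Stokes, and invariance under a smooth deformation $\sigma_t$ with $\mu_t^{-1}(0)=M^0$ all carry over from Theorem \ref{dein} once rapid-decay estimates are substituted for the compactly-supported-$\hat f$ estimates.

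For the Fourier transform identity \eqref{icl}, Fubini at fixed $s$ gives
\begin{equation*}
\int_M\int_{\mathfrak g} e^{is\Omega(x)}\alpha(x)\hat f(x) dx \;=\; \int_{\mathfrak g}\hat f(x)\,J_s(x)\,dx, \qquad J_s(x):=\int_M e^{is\Omega(x)}\alpha(x),
\end{equation*}
with $J_s$ a smooth function on $\mathfrak g$ of polynomial growth (uniform in $s$ on compact subsets of $\mathfrak g$). The preceding arguments show that $\langle J_s,\hat f\rangle$ converges to $\langle \infdex_G^\mu([\alpha]),f\rangle$ for every Schwartz $f$, so $J_s$ converges in $\mathcal S'(\mathfrak g)$ to a tempered limit $J_\infty$. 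Unwinding the convention $\hat f(x)=\int e^{-i\langle\xi,x\rangle}f(\xi)\,d\xi$, the defining pairing $\langle \mathcal F(T),\hat f\rangle = \langle T,f\rangle$ identifies $J_\infty$ with $\mathcal F(\infdex_G^\mu([\alpha]))$.

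The main obstacle is bookkeeping of polynomial growth against Schwartz decay: one must verify that each $(\partial^I g)(s\mu(m))$ decays rapidly in $s$ \emph{uniformly} in $m$ on the compact support of $D\alpha$, and that polynomial growth of $\alpha$ does not destroy the Schwartz character of $\alpha(x)\hat f(x)$ or the tempered character of the final distribution. Both boil down to the elementary fact that smooth functions of polynomial growth act continuously on $\mathcal S(\mathfrak g)$, but this uniformity has to be threaded through every differentiation-under-the-integral, every Fubini switch, and every deformation argument imported from Theorem \ref{dein}.
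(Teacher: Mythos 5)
Your proposal is correct and takes essentially the same route as the paper: the paper's proof also reduces to the observation that polynomial-growth $\alpha$ makes $D\alpha(x)\hat f(x)$ Schwartz in $x$, hence the $s$-derivative is rapidly decreasing, the limit exists, and the identity $\langle\infdex_G^{\mu}(\alpha),f\rangle=\int_M\Phi(\alpha,f)$ gives temperedness. Your expanded discussion of~\eqref{icl} via weak-$*$ convergence of $J_s(x)=\int_M e^{is\Omega(x)}\alpha(x)$ in $\mathcal S'(\mathfrak g)$ is a fair unpacking of the ``similar arguments'' the paper leaves implicit.
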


\begin{remark}\label{rem:stableinfdex}
If $f$ is with compact support and the Fourier transform of
$\alpha(x)$ is a distribution with compact support on $\mathfrak g^*$, the
value $ \int_{M}\int_{\mathfrak g}  e^{i s\Omega(x )}  \alpha(x )
\hat f(x )   dx $ is independent of $s$ when $s$ is sufficiently
large.

\end{remark}

Let us state some immediate properties of the infinitesimal index.
We recall that our construction of the infinitesimal index map is strongly inspired by      Witten  nonabelian localization
theorem  \cite{wit}. In particular, we have the following ``nonabelian localization'' result.

\begin{theorem} \label{theowit1}
 Let $[\alpha]\in {\mathcal H}^{\infty,m}_{G,c}(M) $
and   $I(x)=\int_M \alpha(x)$, a function on $\mathfrak g$ with moderate growth.
Let $\sigma$ be an action form, and let $M^0$  be the zeroes of the
moment map. Then $[\alpha]$ defines an element $[\alpha_0]$  in
${\mathcal H}_{G,c}^{\infty,m}(M ^0) $ and
\begin{equation}
\label{Wl} \mathcal F(
\infdex_G^{\mu}([\alpha_0]))(x)=I(x).
\end{equation}
 \end{theorem}

\begin{proof}
This is clear from Formula \eqref{icl} as $\int_M e^{is\Omega(x)} \alpha(x)$ does not depend
on $s$, as $\Omega(x)$ is exact and $\alpha$ is closed with compact support.
\end{proof}

The left hand side of
\eqref{Wl} depends only of the restriction of $\alpha$ on a small
neighborhood of $M^0$. Thus Theorem \ref{theowit1} says that we can compute the equivariant integral of $\alpha$ on $M$, knowing $\alpha$ on a small neighborhood of $M^0$.

\begin {remark}  Let $M$ be a $G$-manifold equipped with a $G$ invariant Riemannian metric. Take a  $G$-invariant vector field $V$ on $M$ so that $V_m$ at each point $m\in M$ is tangent to the orbit $Gm$ and let  $\sigma$ be the one form associated to $V$ using the metric.
 Then $M^0$ is the set of zeroes  of the vector  field $V$.

$\bullet$ If $G$ is abelian,  we may
choose $V=v_x$ with $x$ generic in $\mathfrak g$, and  then
$M^0=M^G$, the set of fixed points of $G$ on $M$. Theorem \ref{theowit1} leads to
the "abelian localization theorem" of Atiyah-Bott-Berline-Vergne
\cite{ati-bot84},\cite{ber-ver82}.

 $\bullet$ When $G$ is non necessarily  abelian and $M$ is provided with an Hamiltonian structure with symplectic moment map $\nu: M\to \mathfrak g^*$,  then the Kirwan vector field $V_m=\exp (\epsilon \nu(m)) m$ is such that $M^0$ coincides with the critical points of the function $\|\nu\|^2$ (we used  an identification $\mathfrak g^*=\mathfrak g$).
Then one of the connected components of $M^0$ is the zeroes of the symplectic moment map $\nu$, and $\mu$ and $\nu$ coincide near this component.
This is  the situation considered by Witten (and extensively studied by Paradan, \cite{pep1})  with  applications to intersection numbers of  reduced spaces $\nu^{-1}(0)/G$ (as in \cite{jef-kir2}). \end{remark}

\begin{example}\label{variesemp}

$\bullet$\label{Gtrivial} If $G:=\{1\}$ is trivial,
$H_{G,c}^*(M^0)$ is equal to $H_c^*(M)$ and the infinitesimal index  maps to constants, by   just
integration of compactly supported cohomology classes.

 $\bullet$\label{point}
 If $M=\{pt\}$ is a point,
 the moment map and $\Omega(x )$  are both 0 while $M^0=M=\{pt\}$.
 Its equivariant cohomology is  $S[\mathfrak g^*]^G$.

 By Proposition \ref{dein} it is then enough to compute the infinitesimal index
 of the class 1.
  This is given by $$f\mapsto \int_{\mathfrak g} \hat f(x )    dx=f(0)$$
  by Fourier inversion formula.
  So,  in this case the infinitesimal index of 1 is the $\delta$--function $\delta_0$.

More generally,  we  have  extended the definition of
$\infdex_G^{\mu}$ to the space
 $\mathcal P^\infty(\mathfrak g)^G$ of invariant functions on $\mathfrak g$ with at most polynomial growth.
  If $\alpha(x)$ is any $G$-invariant function on $\g$ with polynomial growth
  and $\hat \alpha$ its Fourier transform (a distribution on $\mathfrak g^*$),
   we obtain

 \begin{equation}\label{pointfourier}
 \infdex_G^{\mu^0}(\alpha)=\hat \alpha.
 \end{equation}

$\bullet$\label{esse1} Consider $M=T^*S^1$ with the canonical
action form as in Example \ref{ts}. Then $M^0=S^1$. We compute the
infinitesimal index of the class $1\in H^*_{G,c}(M^0)=\mathbb R$.
Let $\chi(t)$  be a function identically equal to $1$ in a
neighborhood of $t=0$. Then $D\sigma(x)=x t+ dt\wedge  d\theta$,
and by definition
$$\langle \infdex_G^{\mu}(1),f\rangle = \lim_{s\to \infty} \int_{T^*S^1}
\Big (\int_{-\infty}^\infty\chi(t) e^{isx t+ is dt d\theta}\hat
f(x) dx\Big )$$
$$= \lim_{s\to \infty} \int_{T^*S^1}\chi(t) f(st)  e^{is dt d\theta} =\lim_{s\to \infty}2\pi is\int_{\mathbb R}\chi(t) f(st)dt $$
$$=\lim_{s\to \infty}2\pi i \int_{\R}\chi(t/s) f(t)  dt.$$

Passing to the limit, we see that
$$\langle \infdex_G^{\mu}(1),f\rangle=2\pi i \int_{\R} f(t)  dt,$$
that is  the distribution $\infdex_G^{\mu}(1)$ is just $2\pi i$ times  the integration
against the Lebesgue measure $dt$.

$\bullet$\label{esse2} More generally, consider $M=T^*G$ with the canonical
action form $\sigma$ as in Example \ref{ts} and the canonical $G\times G$ action by left and right multiplications.
Set $r:=\dim G$ and  orient  $M$ via the Liouville form $d\sigma^r$. We take $(g,\zeta)$ with $g\in G$ and $\zeta\in \mathfrak g^*$ as coordinates on $M=G\times \mathfrak g^*$.
We write an element of $\mathfrak g\oplus \mathfrak g$ as $(y,x)$.
We have  $M^0=G$ and want to compute the
infinitesimal index of the class $1\in H^0_{G\times G,c}(M^0)=\mathbb R$.
  Let $\chi $  be a function with compact support, $G $ invariant and identically equal to $1$ in a
neighborhood of $0$ in $\mathfrak g^* $.
This function gives  also a function on $T^*G=G\times \mathfrak g^*$,
 still denoted by $\chi$. Then $\chi(g,\zeta)=\chi(\zeta)$ is a representative of 1.
Let $f$ be a function on $\mathfrak g^*\oplus \mathfrak g^*$.
Then  (using Formulae (\ref{mmm}), (\ref{liofo1}) and Fourier inversion), we have: $$\langle \infdex_{G\times G}^{\mu}(1),f\rangle = \lim_{s\to \infty} \int_{T^*G}\int_{\mathfrak g\oplus \mathfrak g}
\chi(\zeta) e^{is d\sigma}e ^{i s\langle \zeta, g^{-1}y-x\rangle} \hat f(x,y) dx dy$$
$$= \lim_{s\to \infty} \int_{T^*G}\int_{\mathfrak g\oplus \mathfrak g}
\chi(\zeta) (is)^{r} \frac{(d\sigma)^r}{r!} e ^{i s   \langle \zeta, g^{-1}y-x\rangle} \hat f(x,y) dx dy$$
$$=i^r
\lim_{s\to \infty} \int_{G\times \mathfrak g^*}\int_{\mathfrak g\oplus \mathfrak g}
\chi(\zeta) s^{r} e ^{i s\langle \zeta, g^{-1}y-x\rangle} \hat f(x,y) dx dy d\zeta dg$$
$$=i ^r \lim_{s\to \infty} \int_{ G\times \mathfrak g^*}
 \chi(\zeta) f ( sg\zeta, - s\zeta) d\zeta dg$$
 $$= i ^r \lim_{s\to \infty}\int_{ G\times \mathfrak g^*}
 \chi(\zeta/s) f (g\zeta, - \zeta) d\zeta dg.$$

 Taking limit, we obtain
 \begin{equation}\label{infdex1}
  \langle \infdex_{G\times G}^{\mu}(1),f\rangle  =  i^r  \int_{G\times \mathfrak g^*}
 f ( g\zeta,  -\zeta)   d\zeta dg
\end{equation}

$\bullet$\label{Heaviside}
 Consider now $M=\mathbb R^2$ as in
Example \ref{v}. As we have seen, the action form $\sigma$ is
$\frac{1}{2} (v_1 dv_2-v_2 dv_1)$, so $D\sigma(x) =dv_1\wedge
dv_2+x\|v\|^2/2$. Then $M^0=0$.

We compute the infinitesimal index of the class $1\in
H^*_{G,c}(M^0)$. Let $\chi(t)$  be a function on $\R$ with compact
support and  identically equal to $1$ in a neighborhood of $t=0$.
We then get
$$\langle \infdex_G^{\mu}(1),f\rangle =
\lim_{s\to \infty} \int_{\R^2}
\int_{-\infty}^\infty\chi(\|v\|^2) e^{i s x
\frac{\|v\|^2}{2}+ i s dv_1 dv_2}\hat f(x) dx.$$

 Using
polar coordinates on $\R^2$, and  inversion of Fourier transform, we see that
$$\langle \infdex_G^{\mu}(1),f\rangle=2\pi i \lim_{s\to \infty}\int_0^{\infty}\chi(t/s)\int_0^\infty f(t)dt.$$

Taking the limit, we obtain
$$\langle \infdex_G^{\mu}(1),f\rangle=2\pi i\int_0^\infty f(t)dt,$$
that is  the distribution $\infdex_G^{\mu}(1)$ is just  $2\pi i$ times  the Heaviside
distribution supported on $\R^+.$

\end{example}

\section{Properties of the infinitesimal index}

There are several functorial properties of the infinitesimal index
that we need to investigate: {\em  locality, product, restriction, the map $i_!$, free action}.

\subsection{Locality}\label{excision}

The easiest property is  {\em locality}.

Let $M$ be a $G$--action manifold  with moment map $\mu$ and
$i:U\to M$ an invariant open set, then we have a mapping
$i_*:\mathcal A_{G,c}(U)\to \mathcal A_{G,c}(M)$ which induces
also a mapping
$$i_*:H^*_{G,c}(U^0)\to H^*_{G,c}(M^0).$$
\begin{proposition}\label{loc}
The mapping $i_*$ is compatible with the infinitesimal index.
\end{proposition}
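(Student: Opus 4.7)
The claim is essentially a bookkeeping statement, and I would split the proof into two transparent steps: first check that $i_*$ sends cocycles to cocycles (so the induced map on cohomology is genuinely the one under discussion), then check that the two defining integrals literally coincide.

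Step 1: well-definedness of $i_*$ on cohomology. Let $\alpha \in \mathcal{A}_{G,c}(U)$ represent a class in $H^*_{G,c}(U^0)$, so its support $K$ is compact in $U$ and $D\alpha$ is supported on a compact set $K' \subset U \setminus U^0$. Extension by zero gives $i_*\alpha \in \mathcal{A}_{G,c}(M)$ with the same support $K$, and $D(i_*\alpha) = i_*(D\alpha)$ has support $K'$. Since $U^0 = U \cap M^0$ and $K' \subset U \setminus U^0$, I have $K' \cap M^0 = \emptyset$, so $i_*\alpha$ is a legitimate representative of a class in $H^*_{G,c}(M^0)$, and the map $i_*$ on cohomology is the one induced by extension by zero.

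Step 2: comparison of the integrals. Writing $\mu_U := \mu|_U$ and $\sigma_U := \sigma|_U$ so that $\Omega_U = D\sigma_U$ equals the restriction of $\Omega$ to $U$, I would simply plug $i_*\alpha$ into the defining formula \eqref{infsas}:
\begin{equation*}
\langle \infdex_G^{\mu}(i_*[\alpha]), f\rangle = \lim_{s\to\infty} \int_M \int_{\mathfrak g} e^{is\Omega(x)} (i_*\alpha)(x)\, \hat f(x)\, dx.
\end{equation*}
Because $(i_*\alpha)(x)$ vanishes identically on $M\setminus U$, the integral over $M$ collapses to an integral over $U$, and on $U$ the integrand is exactly $e^{is\Omega_U(x)}\alpha(x)\hat f(x)$. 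Hence the right-hand side equals $\langle \infdex_G^{\mu_U}([\alpha]), f\rangle$, which is what compatibility means.

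The only thing that requires a word is the convergence of the $s\to\infty$ limit on the $U$ side, but this is automatic: the choice of $s_0$ in the construction of the infinitesimal index on $M$ is determined by $R$ and $\rho = \min_{m \in K'}\|\mu(m)\|$, and since $K' \subset U$, this same $\rho$ bounds the restriction problem on $U$ from below as well. Thus the stabilization in $s$ established in Theorem \ref{dein} applies simultaneously to both sides, and the equality of limits follows from the pointwise equality of the integrals for all $s \geq s_0$. There is no genuine obstacle here; the entire content of the proposition is the observation that extension by zero commutes with $\int_M$ and with the pullbacks of $\sigma$ and $\mu$.
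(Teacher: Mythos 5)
Your proof is correct and is exactly the elaboration of what the paper compresses into ``immediate from the definitions'': extension by zero preserves supports, so the representative and its differential land where they should, and the defining integral on $M$ collapses to the one on $U$ since $i_*\alpha$ vanishes off $U$. The observation that the same $\rho$ controls stabilization on both sides is the right way to justify the interchange of limits, and there is nothing further to add.
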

\begin{proof}
This is immediate from the definitions.
\end{proof}

\subsection{Product of manifolds}

  If we have a product  $M_1\times M_2$ of two manifolds
 relative to two different groups $G_1\times G_2$,
 we have
$$(M_1\times M_2)^0= M_1^0\times M_2^0$$
and the cohomology is the product.
\begin{proposition}\label{product}
The infinitesimal index of the external product of two cohomology
classes is the external product of the two distributions.
\end{proposition}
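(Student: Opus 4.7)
The plan is to unpack the definition of the infinitesimal index on the product and show that for test functions of product type it splits as the product of the two individual indices. Since product test functions are dense in the appropriate sense, this determines the distribution.

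First I would set up notation. The product $M_1\times M_2$ carries the $G_1\times G_2$-invariant action form $\sigma=\pi_1^*\sigma_1+\pi_2^*\sigma_2$, with moment map $\mu(m_1,m_2)=(\mu_1(m_1),\mu_2(m_2))\in\mathfrak g_1^*\oplus\mathfrak g_2^*$, so indeed $(M_1\times M_2)^0=M_1^0\times M_2^0$. For $(x_1,x_2)\in\mathfrak g_1\oplus\mathfrak g_2$ the associated equivariant form decomposes as
\begin{equation*}
\Omega(x_1,x_2)=\Omega_1(x_1)+\Omega_2(x_2),
\end{equation*}
and since each $\Omega_i$ is a sum of forms of even degree they commute under wedge, giving $e^{is\Omega(x_1,x_2)}=e^{is\Omega_1(x_1)}\wedge e^{is\Omega_2(x_2)}$. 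If $\alpha_i\in\mathcal A_{G_i,c}(M_i)$ represents $[\alpha_i]\in H^*_{G_i,c}(M_i^0)$, the external product $\alpha=\alpha_1\wedge\alpha_2$ is a $G_1\times G_2$-equivariant compactly supported form on $M_1\times M_2$ whose equivariant differential $D\alpha=D\alpha_1\wedge\alpha_2\pm\alpha_1\wedge D\alpha_2$ is supported away from $M_1^0\times M_2^0$; hence $\alpha$ represents a class in $H^*_{G_1\times G_2,c}((M_1\times M_2)^0)$, and by bilinearity this is the external product of the two cohomology classes.

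Next I would test the infinitesimal index against a product function. Let $f_i\in C_c^\infty(\mathfrak g_i^*)$; then $f=f_1\otimes f_2$ has Fourier transform $\hat f(x_1,x_2)=\hat f_1(x_1)\hat f_2(x_2)$, and the Lebesgue measure splits accordingly. By Fubini (everything is absolutely convergent as noted after \eqref{infsas}),
\begin{equation*}
\int_{M_1\times M_2}\!\int_{\mathfrak g_1\oplus\mathfrak g_2}\!e^{is\Omega}\,(\alpha_1\wedge\alpha_2)\,\hat f_1(x_1)\hat f_2(x_2)\,dx_1\,dx_2
\end{equation*}
equals the product of the analogous integrals on $M_i$, up to the sign from moving forms past each other. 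A careful sign check using the bidegree of $\alpha_1,\alpha_2$ and the fact that $\Omega_i$ are even shows that the wedge orders coherently with the product orientation on $M_1\times M_2$, so no extra sign appears. Passing to the limit $s\to\infty$ (valid by Theorem~\ref{dein} applied on each factor) yields
\begin{equation*}
\langle\infdex^{\mu}_{G_1\times G_2}([\alpha_1\wedge\alpha_2]),f_1\otimes f_2\rangle=\langle\infdex^{\mu_1}_{G_1}([\alpha_1]),f_1\rangle\,\langle\infdex^{\mu_2}_{G_2}([\alpha_2]),f_2\rangle.
\end{equation*}

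Finally I would invoke density: product functions $f_1\otimes f_2$ span a dense subspace of $C_c^\infty(\mathfrak g_1^*\oplus\mathfrak g_2^*)$ (in the usual $\mathcal D$-topology), so the identity on product test functions uniquely determines the distribution on the left as the tensor product of the two $G_i$-invariant distributions on the right. The main potential obstacle is purely bookkeeping: tracking the sign in the Fubini manipulation when interchanging the factors of $\alpha_1\wedge\alpha_2$ with $e^{is\Omega_i}$; since $\Omega_i$ is even-degree this is harmless, but it needs to be written out once.
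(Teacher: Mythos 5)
Your proof is correct and follows exactly the route the paper intends: the paper's own proof is just the one-line assertion ``This is immediate from the definitions,'' and your argument is precisely the unpacking of that claim (product action form, factorization of $e^{is\Omega}$ and of the Fourier transform on product test functions, Fubini, stabilization in $s$ on each factor, and density of product test functions to identify the resulting distribution with the tensor product).
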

\begin{proof}
This is immediate from the definitions.
\end{proof}

\subsection{ Restriction to subgroups}\label{subgroup}

Let $L\subset G$ be a compact subgroup of $G$ so that $\mathfrak
l$, the Lie algebra of $L$, is  a subalgebra of $\mathfrak g$. The
moment map $\mu_L$ for $L$ is just the composition of $\mu_G$ with
the restriction $p:\mathfrak g^*\to \mathfrak l^*$. Thus
$\mu_L^{-1}(0)\supset \mu_G^{-1}(0).$

 If $f$ is a test function on $\mathfrak l^*$, then $p^*f$ is a
smooth function on $\mathfrak g^*$ constant along the fibers of the
projection. \begin{definition}
We will say that a distribution $\Theta$ on $\mathfrak
g^*$
 is a distribution with
compact support along the fibers, if for any test function $f$ on
$\mathfrak l^*$, the distribution $(p^*f)\Theta$ is with compact
support on $\mathfrak g^*$.
\end{definition} If $\Theta$  is a distribution on $\mathfrak
g^*$
 with
compact support along the fibers,  we may define $p_*\Theta$ as a
distribution on $\mathfrak l^*$ by
\begin{equation}
\label{plos}\langle p_*\Theta,f\rangle :=\int_{\mathfrak g^*} (p^*f) \Theta.
\end{equation}

The right hand side is computed as the limit when $T$ tends to
$\infty$ of $\langle \Theta,(p^*f)\chi_T \rangle $ when $\chi_T$
is a smooth function with compact support and equal to $1$ on the
ball $B_T$ of $\mathfrak g^*$.

Let $Z_G$ be a closed $G$-invariant subset of $M$ containing
$\mu_L^{-1}(0)$ (if $L$ is normal in $G$ in particular if $G$ is abelian, we can take $Z_G=\mu_L^{-1}(0)$). Then we have two maps

$$j:
H^*_{G,c}(Z_G)\to H^*_{G,c}(\mu_G^{-1}(0))$$ and
$$r:H^*_{G,c}(Z_G)\to H^*_{L,c}(\mu_L^{-1}(0)).$$
\begin{theorem}\label{restriction}
If $[\alpha]\in H^*_{G,c}(Z_G)$
 then ${\rm infdex}_G^{\mu_G}(j[\alpha])$ is compactly supported along the
 fibers of the map $p:\mathfrak g^*\to \mathfrak l^*$, and
\begin{equation}
\label{cafor}p_*({\rm infdex}_G^{\mu_G}(j[\alpha]))={\rm infdex}_L^{\mu_L}(r[\alpha]).
\end{equation}
\end{theorem}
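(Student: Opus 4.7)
Choose a representative $\alpha$ of $[\alpha]\in H^*_{G,c}(Z_G)$ that is compactly supported on $M$ with $D_G\alpha$ supported in $M\setminus Z_G$, so $D_G\alpha$ vanishes on a neighborhood of $\mu_L^{-1}(0)\supset\mu_G^{-1}(0)$. Then $\alpha$ represents $j[\alpha]$, and the restriction $\alpha|_{\mathfrak l}$ of the polynomial dependence from $\mathfrak g$ to $\mathfrak l$ represents $r[\alpha]$, because $D_L(\alpha|_{\mathfrak l})(y)=(D_G\alpha)(y)$ for $y\in\mathfrak l$. Fix an $L$-invariant splitting $\mathfrak g=\mathfrak l\oplus\mathfrak q$ with dual splitting $\mathfrak g^*=\mathfrak l^*\oplus\mathfrak q^*$ identifying $p$ with the projection; write $x=y+z$, $\xi=\eta+\zeta$. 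The geometric key is $\Omega_G(y)=\mu_L(m)\cdot y+d\sigma=\Omega_L(y)$ for $y\in\mathfrak l$.

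\textbf{Compact support along fibers.} By Proposition \ref{extend} decompose $\Theta:=\infdex_G^{\mu_G}(j[\alpha])=A+B$, where $A$ is the Fourier transform of the polynomial $x\mapsto\int_M\alpha(x)$, hence supported at $0\in\mathfrak g^*$, and, applying Fourier inversion \eqref{treo} with $D_G\alpha=\sum_aP_a^D(x)\beta^a$,
\[
\langle B,h\rangle=i\sum_a\int_M\sigma\,\beta^a\int_0^\infty e^{isd\sigma}(P_a^D(-i\partial)h)(s\mu_G(m))\,ds.
\]
Thus $\mathrm{supp}(B)$ is contained in the closure of $\{s\mu_G(m):s\ge0,\ m\in\mathrm{supp}(D_G\alpha)\}$. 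Setting $\rho:=\min_{m\in\mathrm{supp}(D_G\alpha)}\|\mu_L(m)\|>0$ (well-defined since $D_G\alpha$ vanishes on a neighborhood of $\mu_L^{-1}(0)$), for $f$ on $\mathfrak l^*$ supported in $B_R$ the constraint $s\mu_L(m)\in B_R$ forces $s\le R/\rho$, so $\mathrm{supp}(B)\cap p^{-1}(\mathrm{supp}(f))$ is bounded. Hence both $(p^*f)A$ (supported at $0$) and $(p^*f)B$ are compactly supported, and so is $(p^*f)\Theta$.

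\textbf{The identity.} Pick $\chi\in C_c^\infty(\mathfrak q^*)$ equal to $1$ on a neighborhood of $0$, set $\chi_T(\zeta)=\chi(\zeta/T)$, and define the compactly supported test function $h_T(\xi)=f(p\xi)\chi_T(q\xi)$ on $\mathfrak g^*$. Then $\widehat{h_T}(y+z)=\hat f(y)\widehat{\chi_T}(z)$, and the scaling $\int z^I\widehat{\chi_T}(z)\,dz=(-i)^{|I|}T^{-|I|}\partial^I\chi(0)$ gives, for any polynomial $P(y,z)$, that $\int_{\mathfrak q}P(y,z)\widehat{\chi_T}(z)\,dz\to P(y,0)$ as $T\to\infty$ with polynomial-in-$y$ bounds uniform in $T\ge1$. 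Applying this termwise to both parts of \eqref{eq:ints} for $\Phi(\alpha,h_T)$ and using $\Omega_G(y)=\Omega_L(y)$, $D_G\alpha(y)=D_L(\alpha|_{\mathfrak l})(y)$ for $y\in\mathfrak l$, we obtain in the limit
\[
\lim_{T\to\infty}\Phi(\alpha,h_T)=\int_{\mathfrak l}\alpha|_{\mathfrak l}(y)\hat f(y)\,dy+i\sigma\int_0^\infty\int_{\mathfrak l}e^{is\Omega_L(y)}D_L(\alpha|_{\mathfrak l})(y)\hat f(y)\,dy\,ds,
\]
which is the $L$-version of $\Phi$ applied to $(\alpha|_{\mathfrak l},f)$. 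Integrating over $M$ and invoking Proposition \ref{extend} for $L$ on the right, while on the left using the previous step (the pairing $\langle\Theta,h_T\rangle$ stabilizes at $\langle p_*\Theta,f\rangle$ for $T$ large, once $\chi_T(q\cdot)\equiv1$ on the compact support of $(p^*f)\Theta$), yields $\langle p_*\Theta,f\rangle=\langle\infdex_L^{\mu_L}(r[\alpha]),f\rangle$, proving \eqref{cafor}.

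\textbf{Main obstacle.} The delicate step is interchanging the $T\to\infty$ limit with the integrations over $s\in[0,\infty)$ and over $m\in M$. This is handled by the $s$-localization from the support step: on $\mathrm{supp}(D_G\alpha)$ the $s$-integrand in $\Phi(\alpha,h_T)$ is supported in $[0,R/\rho]$, and $\mathrm{supp}(\alpha)\subset M$ is compact, giving uniform-in-$T\ge1$ integrable bounds so that dominated convergence applies.
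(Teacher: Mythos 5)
Your argument is correct but follows a genuinely different path from the paper's. The paper does not decompose $\Theta=A+B$, nor does it pass to the $\Phi$-representation of Proposition \ref{extend}. Instead, for $f$ supported in $B_R$ it notes (by the Lemma~\ref{lem:dds}-type stabilization argument) that $\langle\Theta,\chi p^*f\rangle=\int_M\Psi(s_0,\alpha,\chi p^*f)$ for any fixed $s_0>R/\epsilon$ and any test function $\chi$ on $\mathfrak g^*$, then invokes Proposition~\ref{ilsupd} to see that this distribution of $\chi$ is supported in the single compact set $s_0\mu_G(K)$, $K=\mathrm{supp}(\alpha)$; that establishes compact support along the fibers in one stroke. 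For the identity, the paper simply observes that once $T$ is large enough that $\chi_T\equiv1$ near $s_0\mu_G(K)$, formula \eqref{checca} gives $\Psi(s_0,\alpha,\chi_T p^*f)=\sum_a\big((P_a(-i\partial)p^*f)\circ s_0\mu_G\big)e^{is_0d\sigma}\alpha^a$, and then uses the purely algebraic fact that the $\ker p$-directional derivatives in $P_a(-i\partial)$ annihilate $p^*f$ to conclude that this form \emph{equals} $\Psi(s_0,\alpha|_{\mathfrak l},f)$ on the nose, with no second analytic limit. Your route replaces this by the $\Phi$-representation, which buys you the clean structural decomposition (with $A$ supported at $0$ and an explicit cone bound on $\mathrm{supp}(B)$) but requires pushing $T\to\infty$ through the $s$- and $m$-integrals, hence the dominated-convergence work you flag at the end. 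Both proofs ultimately rest on the same two ingredients: the $s$-localization $s\le R/\rho$ on $\mathrm{supp}(D_G\alpha)$ and the algebraic reduction of $P_a(-i\partial)(p^*f)$ to $\mathfrak l$.

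One imprecision worth fixing. In the second part of $\Phi(\alpha,h_T)$, the $z$-dependence of the integrand is $e^{is\langle\mu_\perp(m),z\rangle}\cdot(\text{polynomial in }z)$, coming from $\Omega_G(y+z)$; this is not a polynomial in $z$, so the scaling identity $\int_{\mathfrak q}P(y,z)\widehat{\chi_T}(z)\,dz\to P(y,0)$ does not ``apply termwise'' as stated. What you actually need is the slightly more general limit $\int_{\mathfrak q}F(z)\widehat{\chi_T}(z)\,dz=\int_{\mathfrak q}F(z/T)\widehat{\chi}(z)\,dz\to F(0)$ for $F$ continuous of polynomial growth, applied to $F(z)=e^{is\langle\mu_\perp(m),z\rangle}P(y,z)$; equivalently, one computes $\int e^{is\langle\mu_\perp,z\rangle}z^I\widehat{\chi_T}(z)\,dz=(-i)^{|I|}T^{-|I|}(\partial^I\chi)(s\mu_\perp(m)/T)$, which is bounded by $\|\partial^I\chi\|_\infty$ rather than $|\partial^I\chi(0)|$ and tends to $\delta_{I,0}$. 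This is an easy repair, not a structural gap: the same $s$-localization and the compactness of $\mathrm{supp}(\alpha)$ that you already invoke carry the dominated-convergence argument.
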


\begin{proof}
Write  $\mathcal F^{\mathfrak g^*}(h)$ for the Fourier
transform $\hat h$ of a function $h$ on $\mathfrak g^*$.

Let $f$ be a test function  on $\mathfrak l^*$ with support on a
ball $B_R$. We have, for $\chi$ a test function
on $\mathfrak g^*$,
$$\langle (p^*f) {\rm
infdex}_G^{\mu_G}(j[\alpha]), \chi\rangle = \lim_{s\to
\infty}\int_M\int_{\mathfrak g} e^{i s\Omega(x)}\alpha(x)
\mathcal F^{\mathfrak g^*}((p^*f) \chi)(x) dx.$$

By our assumption on $\alpha$, there exists $\epsilon >0$ such
that $D\alpha$ is equal to $0$ on the subset
$\|\mu_L(m)\|=\|p\mu_G(m)\|<\epsilon $ of $M$. The  support  $C$ of  $(p^*f) \chi$  is contained in the set  of $y\in\mathfrak g^*$  such that $\|p(y)\|<R$.  The support $K$ of $D\alpha$  is contained in the set of points $m$ such that $\|p\mu_G(m)\|>\epsilon $. Thus by Proposition \ref{ilsupd}  and the argument of Lemma  \ref{lem:dds}, the
distribution  $$\chi\to \int_M \int_{\mathfrak g}e^{i
s\Omega(x)}\alpha(x) \mathcal F^{\mathfrak g^*}(\chi p^*f)(x) dx$$
stabilizes as soon as $s>R/\epsilon$.

Write for $s_0>R/\epsilon$ $$\langle (p^*f){\rm infdex}_G^{\mu_G}(j[\alpha]),
\chi\rangle =\int_M \int_{\mathfrak g}e^{is_0 \Omega(x)}\alpha(x)
\mathcal F^{\mathfrak g^*}(\chi p^*f)(x) dx$$
$$=\int_M \Psi(s_0,\alpha,\chi p^*f)$$ where $$\Psi(s_0,\alpha,\chi p^*f)(m)=
\sum_a\Big[ \int_{\mathfrak g}  P_a(x)e^{i s_0 \langle
\mu(m),x\rangle } \mathcal F^{\mathfrak g^*} ( \chi p^*f)(x) dx \Big]
e^{i s_0d\sigma}\alpha^a  $$ \begin{equation}
\label{checca}= \sum_a((P_a(-i\partial)( \chi p^*f )\circ (s_0\mu) )
e^{i s_0d\sigma}\alpha^a.
\end{equation} Applying Proposition \ref{ilsupd},  we have
that, if $K$ is the compact
support of $\alpha$, as $s_0$ is greater than $R/\epsilon$,  the form $ \Psi(s_0,\alpha,\chi p^*f)$ is supported on the compact
subset $s_0 \mu_G(K)$ in $\mathfrak g^*$. This shows the first statement that  ${\rm infdex}_G^{\mu_G}(j[\alpha])$ is compactly supported along the  fibers of $p$.

We pass next to Formula \eqref{cafor}.   We then have $$\langle (p^*f) {\rm
infdex}_G^{\mu_G}(j[\alpha]), \chi_T\rangle=\int_M \Psi(s_0,\alpha,\chi_T
p^*f)$$ for any $T$ large.

 Using Formula (\ref{checca}), when $T$ is sufficiently large, as
 $\chi_T$ is equal to $1$ on   the compact
subset $s_0 \mu_G(K)$, thus
 $\Psi(s_0,\alpha,\chi_T p^*f)$ is simply
$$\sum_a((P_a(-i\partial)  p^*f)\circ (s_0\mu) ) e^{i
s_0 d\sigma}\alpha^a.$$

As $p^*f$ is constant along the fibers, if we denote by $\alpha_0$
the restriction of $\alpha(x)$ to $\mathfrak l$, we see that
$\Psi(s_0,\alpha,\chi_T p^*f)$ is equal to the differential form
 $\Psi(s_0,\alpha_0,f)$ as all derivatives in the $\ker p$ direction
 annihilate $p^*f$.
We thus obtain our theorem.

\end{proof}

\subsection{Thom class and the map $i_!$ }\label{push}

 Let $Z$ be an  oriented $G$ manifold of
dimension $d$ and $i:M\hookrightarrow Z$ a $G$-stable oriented submanifold of
dimension $n=d-k$.
 Assume that $M$
 is an action  manifold with moment map  $\mu$ and that  $Z$ is equipped with an action form $\sigma_Z$ such that the associated moment map $\mu_Z$ extends $\mu$. Thus $Z^0\cap M=M^0$.
 Under these assumptions,
    we will define a map
$$i_!:H^*_{G,c}(M^0)\to H^*_{G,c}(Z^0)$$
preserving the infdex.

Let us recall the existence of an  equivariant Thom class (\cite{Mathai-Quillen}, see
\cite{gui-ste99} pag. 158, \cite{par-ver1}).
We assume first that $M$ has a $G$-stable tubular
neighborhood $N$ in $Z$, with projection $p:N\to M$.  Then there exists a unique class  $\tau_M$ of
equivariantly closed forms   on $N$ with compact support along the
fibers  so that the integral $p_*\tau_M$  is identically equal to
$1$ along each fiber. Thus for any equivariant form $\alpha(x)$ on
$M$ with compact support (but not necessarily closed),
 we have that $$\int_M\alpha =\int_{   N}  p^*\alpha\wedge \tau_M.$$

 In general, let us take a class $[\alpha]\in H^*_{G,c}(M^0)$ where
$\alpha\in  \mathcal A_{G,c}(M)  $ and $D\alpha$ has support $K$
in $M\setminus M^0$.

Consider  a $G$-stable open set $U\subset M$ with the following
properties.
\begin{enumerate}
\item The support of $\alpha$ is contained in $U$. \item  The
closure of $ U$ is compact and has an open  neighborhood  $A$ in
$Z$ such that $M\cap A$ has a G-stable tubular neighborhood in
$A$.
\end{enumerate}

By locality,  we can then substitute $U$ to $M$ and thus assume
that the pair $(Z,M)$ has all the properties which insure the
existence of a Thom class $\tau_M$.

Consider a $G$-invariant Riemannian metric on the normal bundle
$\mathcal N$ to $M$ in $Z$.
 Define $S^{\epsilon}$ as the (open) disk bundle of radius $\epsilon$
 in $\mathcal N$.
  Then we can  take our tubular neighborhood
  in such a way that it is diffeomorphic to $S^{\epsilon}$
  for some $\epsilon$.

   We claim that we can  take
 $S^\epsilon$ so close to $ M$  that
 $p^{-1}K\cap S^\epsilon \cap Z^0=\emptyset$.
 Indeed,  $p^{-1}K\cap \overline{S^{\epsilon}}$
 is a compact set and, since $K$ is disjoint from $M^0$ and
 hence from $Z^0$, for a sufficiently small $\epsilon$,
    $p^{-1}K\cap \overline{S^\epsilon}$ is disjoint from $Z^0$.
   \smallskip

    Let us now fix  the Thom form  $\tau_M$ in
    $\mathcal A_{G,c}(\mathcal N)$
    with support  in  $S^\epsilon$.

 Consider then the form $p^*\alpha\wedge \tau_M$.  We have
 that
 $ D(p^*\alpha\wedge \tau_M)=p^*D\alpha\wedge \tau_M$
 has support in $p^{-1}K\cap S^\epsilon \subset Z\setminus Z^0$. It follows that $p^*\alpha\wedge \tau_M$ defines an element in
$H^*_{G,c}(Z^0)$.

We claim that this element depends only on the class $[\alpha]$.
So first take another Thom form $\tau'_M$ with the same
properties. Then there is a form $r_M\in  \mathcal
A_{G,c}(S^\epsilon)  $   so that
$\tau_M-\tau'_M=Dr_M$ and
$$p^*\alpha\wedge \tau_M-p^*\alpha\wedge \tau'_M=
p^*\alpha\wedge Dr_M= D(p^*\alpha\wedge  r_M)-p^*D\alpha\wedge r_M
$$ where $p^*\alpha\wedge  r_M $ has compact support   and
$p^*D\alpha\wedge  r_M$
 has support in  $Z\setminus Z^0$.

Next  assume that $\alpha$ is supported outside  $M^0$,
 then again
we may take $\tau_M$ so that $p^*\alpha\wedge \tau_M$  is
supported outside $Z^0$.

Finally, if $\alpha=D\beta$, we have
$p^*\alpha\wedge\tau_M=D(p^*\beta \wedge \tau_M)$.

Hence we can set
\begin{equation}
\label{ilosc}i_![\alpha]:=[p^* \alpha\wedge \tau_M].
\end{equation}
\begin{theorem}\label{ilsh}

Assume that $M$
 is an action  manifold with  action form $\sigma$ and moment map  $\mu$ and that  $Z$ is equipped with an action form $\sigma_Z$ such that the associated moment map $\mu_Z$ extends $\mu$.
 Then the morphism
$$i_!:H^*_{G,c}(M^0)\to H^*_{G,c}(Z^0)$$
preserves the infinitesimal index.

\end{theorem}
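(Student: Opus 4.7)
The plan is to compute $\infdex_G^{\mu_Z}(i_![\alpha])$ directly from its limit definition and, by fiber integration of the Thom class, reduce it to the integral defining $\infdex_G^\mu([\alpha])$. By locality (Proposition \ref{loc}) and the construction of $i_!$ in Section \ref{push}, I first reduce to the case where $p\colon Z \to M$ is a $G$-equivariant vector bundle with $M$ the zero section and $\tau_M(x)$ is an equivariantly closed Thom form with compact support along the fibers, normalized so that $p_*\tau_M(x) = 1$.

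Introduce the auxiliary $1$-form $\tau := \sigma_Z - p^*\sigma$ on $Z$. $G$-equivariance of $p$ yields $dp(v_x^Z) = v_x^M$, so the moment map of the action form $p^*\sigma$ is $\mu\circ p$; therefore $D(p^*\sigma) = p^*\Omega_M$ and $D\tau = \Omega_Z - p^*\Omega_M$. Since $p^*\Omega_M$ and $D\tau$ are commuting equivariantly closed forms of even total degree, the elementary identity $e^{isD\tau} = 1 + iD\int_0^s \tau\, e^{iuD\tau}\,du$ (which follows from $D\tau\cdot e^{iuD\tau} = D(\tau\, e^{iuD\tau})$ and differentiation in $s$) yields the key decomposition
\[
e^{is\Omega_Z(x)} = e^{isp^*\Omega_M(x)} + D\eta_s(x), \qquad \eta_s(x) := i\,e^{isp^*\Omega_M(x)}\int_0^s \tau\, e^{iuD\tau(x)}\,du.
\]
Plugging this into the double integral defining $\infdex_G^{\mu_Z}(i_![\alpha])$, applying Fubini along the fiber, and using the projection formula $p_*(p^*\omega\wedge\tau_M) = \omega$, the integral decomposes as
\[
\int_M\!\int_{\mathfrak g}\alpha(x)\,e^{is\Omega_M(x)}\hat f(x)\,dx \;+\; \int_M\!\int_{\mathfrak g}\alpha(x)\wedge D\bigl(p_*(\eta_s\tau_M)(x)\bigr)\hat f(x)\,dx,
\]
whose first summand is exactly the $s$-th approximant to $\langle\infdex_G^\mu([\alpha]),f\rangle$.

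It remains to show the second (correction) summand vanishes for $s$ sufficiently large. Since $\alpha$ is compactly supported, equivariant Stokes transforms the correction (up to sign) into $\int_M\int_{\mathfrak g} D\alpha(x)\wedge p_*(\eta_s\tau_M)(x)\,\hat f(x)\,dx$. A further application of the projection formula gives
\[
p_*(\eta_s\tau_M)(x) = i\,e^{is\Omega_M(x)}\int_0^s p_*\bigl(\tau\, e^{iuD\tau(x)}\tau_M(x)\bigr)\,du,
\]
which is polynomial in $s$ with coefficients that are polynomial-in-$x$ equivariant forms on $M$. The compact support $K$ of $D\alpha$ is disjoint from $M^0$, so $\rho := \min_{m\in K}\|\mu(m)\| > 0$. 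By the Fourier inversion identity \eqref{treo}, the $x$-integral reduces to a polynomial differential operator applied to $f$ evaluated at $s\mu(m)$, which vanishes pointwise on $K$ once $s > R/\rho$ (with $R$ the radius of $\operatorname{supp} f$). The correction is therefore identically zero for such $s$, and passing to the limit $s\to\infty$ gives the desired equality. The main obstacle is the graded-algebra bookkeeping — the sign conventions for the odd-degree form $\tau$, the commutation of $D$ with fiber integration, and the equivariant normalization of $\tau_M$; once these are handled, the vanishing of the correction follows from exactly the same Fourier-support mechanism that made the infinitesimal index well-defined in Theorem \ref{dein}.
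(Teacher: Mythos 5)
Your approach is genuinely different from the paper's and is broadly sound. The paper proceeds in two steps: first it shows that $\infdex_G^{\mu_Z}(i_![\alpha])$ is unchanged when $\sigma_Z$ is deformed along the segment joining it to $p^*\sigma$ (using the same homotopy argument as in the proof of Theorem \ref{dein}, applied to the compactly supported form $\beta=p^*\alpha\wedge\tau_M$), and then it computes cleanly with $\sigma_Z:=p^*\sigma$, where $\Omega_Z=p^*\Omega_M$ and fiber integration of $\tau_M$ finishes immediately. You instead keep $\sigma_Z$ as given and decompose $e^{is\Omega_Z}=e^{isp^*\Omega_M}+D\eta_s$ via $\tau:=\sigma_Z-p^*\sigma$; the first term gives the $s$-th approximant of $\infdex_G^\mu([\alpha])$, and you must kill the correction. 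Both are at bottom homotopy arguments relying on the same Fourier-support mechanism, but yours works at the level of the integrand rather than at the level of the distribution.

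However, there is a genuine gap in your last paragraph. You assert that $p_*(\eta_s\tau_M)(x)$ is ``polynomial in $s$ with coefficients that are polynomial-in-$x$ equivariant forms on $M$,'' and then read off from \eqref{treo} that the $x$-integral is a differential operator applied to $f$ at $s\mu(m)$. This is false: $D\tau(x)=\langle\mu_Z-\mu\circ p,x\rangle+d\tau$ has a nontrivial scalar part, since the hypothesis is only that $\mu_Z$ restricts to $\mu$ on $M$, not that $\mu_Z=\mu\circ p$ throughout the tube. Hence $e^{iuD\tau(x)}$ carries the oscillatory phase $e^{iu\langle\mu_Z(z)-\mu(m),x\rangle}$ in the fiber, and it survives $p_*$. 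Consequently the Fourier evaluation point is not $s\mu(m)$ but $s\mu(m)+u\bigl(\mu_Z(z)-\mu(m)\bigr)$ with $u\in[0,s]$ and $z$ in the fiber over $m$, and since $u$ ranges up to $s$ this shift is not negligible. The conclusion can still be reached: because $\mu_Z$ restricts to $\mu$ on $M$, by continuity one can choose the Thom form $\tau_M$ with support in a tube around the compact set $K={\rm supp}\,D\alpha$ on which $\|\mu_Z-\mu\circ p\|<\epsilon$ for some $\epsilon<\rho$; then $\|s\mu(m)+u(\mu_Z(z)-\mu(m))\|\geq s(\rho-\epsilon)>R$ once $s>R/(\rho-\epsilon)$, and the correction vanishes. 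This shrinking-of-the-tube step is not optional and you have not made it explicit; it is precisely the care the paper takes when it says ``provided we choose $\tau_M$ with support sufficiently close to $M$, there exists an $h>0$ such that on the support of $D\beta$, we have $\|\mu_t\|>h>0$.''
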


 \begin{remark}
 We do not need to assume that the restriction of $\sigma_Z$ to $M$ is the action form $\sigma$ on $M$, only that the moment map $\mu_Z$ restricts to $\mu$.
 \end{remark}

 \begin{proof}
 First let us  see that   ${\rm infdex}_G^{\mu_Z}(i_![\alpha])$ does
not depend of the choice of the form $\sigma_Z$ on $Z$, if the moment map $\mu_Z$ restricts to $\mu$.
We can assume $Z=N$. Let $\beta= p^*\alpha\wedge
\tau_M$. The form $\beta$ is compactly supported.

Let $\sigma_1,\sigma_0$ be two one forms on $Z$ and  consider
$\sigma_t=t\sigma_1+(1-t)\sigma_0$ and $\mu_t$ the corresponding
moment map.  Set $\Omega(t)=D\sigma_t$. We assume that the map $\mu_t$
coincides with $\mu$ on $M$ for all $t$.
 Thus, provided we choose $\tau_M$ with support sufficiently close to  $M$,  there exists an $h>0$  such  that on the support of $D\beta$, we have $\|\mu_t\|>h>0$.

Define $$I(t,s):= \int_N\int_{\mathfrak g} \beta (x)e^{i
s\Omega(t,x)}  \beta (x) \hat f(x) dx.$$

     We can prove that $\frac{d}{dt}I(t,s)=0$ in  the same way that the  invariance of the infinitesimal index $\infdex_G^{\mu_t}$  along a smooth curve $\mu_t$  (proof  of Theorem \ref{dein}), thus we skip the proof.

\smallskip

Having established the independence from $\sigma$, we choose for the final computation    $\sigma_Z:=p^*\sigma$. In this case, since
$\beta=p^*\alpha\wedge \tau_M$,
\begin{equation}
\label{ilsf}\langle {\rm infdex}_G^{\mu_Z}([\beta]), f\rangle=
\lim_{s\to \infty}\int_{\mathfrak g} \int_{N} p^*\left(e^{is
\Omega(x)} \alpha(x) \right)\wedge \tau_M(x) \hat f(x)dx .
\end{equation} As
$\tau_M$ has integral $1$ over each fiber of the projection $p:
N\to M$, we obtain that \eqref{ilsf} is equal to
$$\lim_{s\to \infty}\int_{\mathfrak g}
\int_{M}  e^{is \Omega(x)}  \alpha(x)\hat f(x)dx  $$ which is
our statement.
\end{proof}

\subsection{Free action}\label{sub:fre}

Let $G$ and $L$ be two compact groups. Consider now an oriented manifold
 $N$ under  $G\times L$ action, with action one form $\sigma_{N}$ and
 moment map $\mu_{G\times L}=(\mu_G,\mu_L): N\to \mathfrak g^*\oplus \mathfrak l^*  $.  We
set $N^0=\mu_{G\times L}^{-1}(0)$.
%We orient $L,\mathfrak l,\mathfrak l^*$ in a compatible way as in Formula \eqref{liofo}.

 Assume that

  $\bullet$  the group  $L$ acts freely on $N$.

 $\bullet$ $0$ is a regular value of $\mu_L$.

\medskip

Define  $P=\mu_L^{-1}(0)$. By assumption $P$ is a manifold with a
free $L$-action so  $$M:=\mu_L^{-1}(0)/L$$ is a $G$-manifold.
 We will see in a short while that the orientation on $N$ determines a natural orientation on $M$.

 We denote by $\pi$ the projection $\pi:P\to M$. The invariance of $\sigma_{N}$ under  $L$ action then implies
\begin{proposition}\label{prop:sigmabar}
The restriction   $\overline \sigma$ of $\sigma_{N}$ to $P$ verifies $\iota_x \overline \sigma=0$ for
any $x\in \mathfrak l$ and   descends to a
 $G$-invariant action form $\sigma_M$ on $M$, thus $M$ is an action manifold and $\bar\sigma=\pi^*(\sigma_M)$.
\end{proposition}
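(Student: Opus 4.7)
The plan is to verify the three properties in turn, using only the definitions and the basic-form descent for principal bundles.

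First I would check the horizontality statement $\iota_x \overline{\sigma} = 0$ for $x \in \mathfrak{l}$. By definition of the $L$-component of the moment map, for $x \in \mathfrak{l}$ and $m \in N$,
\[
\mu_L(m)(x) = -\iota_x \sigma_N(m).
\]
Restricting to $P = \mu_L^{-1}(0)$, the left-hand side vanishes. Since $L$ preserves $P$, the vector field $v_x$ is tangent to $P$ for every $x \in \mathfrak{l}$, so $\iota_x \overline{\sigma} = \iota_x (\sigma_N\vert_P) = (\iota_x \sigma_N)\vert_P = 0$.

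Next I would argue that $\overline{\sigma}$ is $L$-basic and therefore descends. Invariance is immediate: $\sigma_N$ is $G\times L$-invariant and $P$ is $L$-stable, so $\overline{\sigma}$ is $L$-invariant. Combined with horizontality from the previous step, $\overline{\sigma}$ is a basic form on the principal $L$-bundle $\pi : P \to M$. The standard descent lemma for free actions (the one used repeatedly in Cartan's model, cf.\ Proposition \ref{pro:free}) then yields a unique one-form $\sigma_M$ on $M$ with $\overline{\sigma} = \pi^*(\sigma_M)$.

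Finally I would verify $G$-invariance of $\sigma_M$. Because the $G$- and $L$-actions commute, $P = \mu_L^{-1}(0)$ is $G$-stable, the projection $\pi$ is $G$-equivariant, and $\overline{\sigma}$ is $G$-invariant. For $g \in G$,
\[
\pi^*(g \cdot \sigma_M) = g \cdot \pi^*(\sigma_M) = g \cdot \overline{\sigma} = \overline{\sigma} = \pi^*(\sigma_M),
\]
and injectivity of $\pi^*$ on forms on $M$ gives $g \cdot \sigma_M = \sigma_M$. There is no real obstacle here; the only point worth stating carefully is the descent step, and this is standard once horizontality and invariance are in place.
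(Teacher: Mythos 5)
Your proof is correct, and in fact the paper gives no proof of this proposition at all — it is introduced with the remark ``The invariance of $\sigma_N$ under $L$ action then implies'' and stated without argument. Your write-up supplies exactly the expected justification: horizontality follows from $\mu_L(m)(x)=-\iota_x\sigma_N(m)=0$ on $P=\mu_L^{-1}(0)$ together with $v_x$ being tangent to $P$; $L$-invariance plus horizontality makes $\overline\sigma$ basic for the free $L$-action, so it descends; and $G$-invariance of $\sigma_M$ follows from commutativity of the two actions, $G$-equivariance of $\pi$, and injectivity of $\pi^*$ on forms. This is the standard argument and matches what the authors clearly had in mind.
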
 We denote  by $\mu$ the moment map on $M$
 associated to $\sigma_M$. The map $\mu$ is
 obtained factoring   the restriction of $\mu_G$ to $P$ which is $L$ invariant, that is $\mu_G=\mu\circ \pi$ on $P$.
  Since $N^0$  is the subset of $P$ where $\mu_G$ equals 0, we see that $M^0=\mu^{-1}(0),$ the fiber at zero of $\mu,$ is $M^0=N^0/L$.

\medskip

 Recall (Proposition \ref{pro:free}) that since the action of $L$ is free,
 we have an isomorphism $\pi^*:H^*_{G,c}(M^0)
 \to H^*_{G\times L,c}(N^0)$.

 Our goal in this section is, given a class $[\gamma]\in H_{G,c}^*(M^0)$,   to compare $ \infdex_G^{\mu}([\gamma]) $ and $ \infdex_{G\times L}^{ \mu_{G\times L}}(\pi^*([\gamma]))$.
\smallskip

As $0$ is a regular value of $\mu_L$, any $L$-stable compact
subset $K$ in $P$ has an $L$-stable  neighborhood   in $N$
isomorphic to $K\times \mathfrak l^*$ with moment map $\mu_L$ being
the  projection on the second factor.
 Since the computations of the infinitesimal index of a given class
 with compact support are local around $N^0$  (by Proposition
 \ref{loc}),
  we may assume that $N=P\times \mathfrak l^*$ and that the  moment map $\mu_L$  is the  projection on the second factor.
  We write an element of $N$ as $(p,\zeta)$ with $p\in P$, $\zeta\in \mathfrak l^*$.

  The composition  of the projection $\eta:N=P\times \mathfrak l^*\to P$ and
  of $\pi:P\to M$   is a fibration  with fiber $L\times \mathfrak l^*=T^*L$.  We orient $M$ using the orientation of $N$ and that given by the symplectic structure on $T^*L$  (see  Formula \eqref{liofo}).

\subsubsection{An auxiliary form}
Let us choose now a connection form $\ \omega\in \mathcal A^1(P)\otimes \mathfrak l$ for the free action of $L$ on $P$. We want to apply Definition \ref{ary}  to the following functions.
 For  $\zeta$   a point in $\l^*$,
  define  $\theta_\zeta \in C^{\infty}(\mathfrak l)^L$  by
  $$\theta_\zeta(x):=\int_{L}e^{i\langle x,l\zeta\rangle
  }dl= \int_{\mathfrak l^*} e^{i\langle f,x\rangle} d\beta_\zeta(f)$$
  where $dl$ is a  Haar measure on $L$ or, in an equivalent way
   where $d\beta_\zeta(f)$ is  a $L$-invariant measure on the orbit $L\zeta\subset \mathfrak l^*$.
\smallskip

Thus for any $\zeta\in \l^*$, we may consider, using  the curvature $R$, cf. Formula \eqref{ledc},  the $G$-equivariant
closed form  $\theta_\zeta(R_y)$ on $M$ given by
\begin{equation}
\label{thzr}\theta_\zeta(R_y)=\int_{L}e^{i\langle R_y, l\zeta\rangle
  }dl=\int_{L}e^{i\langle -\iota_y \omega, l\zeta\rangle}
  e^{i\langle R,l\zeta\rangle
  }dl.
\end{equation}

We need some growth properties of the function $y\to \theta_\zeta(R_y)$.
If we fix $p\in P$ and $\zeta\in \mathfrak l^*$,   let us see that
\begin{lemma}
The function $y\to \theta_\zeta(R_y)(p)$ is the Fourier transform of a compactly supported measure $d\mu_{p,\zeta}$ on $\mathfrak g^*$ (with values in  $\bigwedge T_p^*P$).
\end{lemma}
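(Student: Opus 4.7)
The plan is to use the explicit decomposition $R_y = -\iota_y\omega + R$ (from Formula \eqref{ledc}) to separate out the $y$-dependence, which is linear, and then recognize the resulting expression as a Fourier transform of an explicit pushforward measure from $L$.

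First I would write out the integrand in \eqref{thzr} as a product. Since $-\iota_y\omega$ is a $\mathfrak{l}$-valued scalar (degree $0$ form) and $R$ is a $\mathfrak{l}$-valued $2$-form, the expressions $\langle -\iota_y\omega,\mathrm{Ad}^*(l)\zeta\rangle$ and $\langle R,\mathrm{Ad}^*(l)\zeta\rangle$ anticommute trivially (one is even), so the exponential factors as
\begin{equation*}
\theta_\zeta(R_y)(p)=\int_L e^{-i\langle \iota_y\omega(p),\mathrm{Ad}^*(l)\zeta\rangle}\,e^{i\langle R(p),\mathrm{Ad}^*(l)\zeta\rangle}\,dl,
\end{equation*}
where the second exponential is actually a polynomial in $R(p)$ (truncated since $R(p)$ is nilpotent as an element of $\bigwedge T_p^*P\otimes\mathfrak{l}$), hence a well-defined element of $\bigwedge T_p^*P$.

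Second, I would exploit the key observation that the $y$-dependence is linear. Define the linear map $A(p):\mathfrak{g}\to\mathfrak{l}$ by $A(p)y:=\iota_y\omega(p)$, and let $A(p)^*:\mathfrak{l}^*\to\mathfrak{g}^*$ be its transpose. Then $\langle \iota_y\omega(p),\mathrm{Ad}^*(l)\zeta\rangle=\langle A(p)^*\mathrm{Ad}^*(l)\zeta,y\rangle$, so
\begin{equation*}
\theta_\zeta(R_y)(p)=\int_L e^{-i\langle \Phi_{p,\zeta}(l),\,y\rangle}\,\varphi_{p,\zeta}(l)\,dl,
\end{equation*}
where $\Phi_{p,\zeta}:L\to\mathfrak{g}^*$ is the smooth map $l\mapsto A(p)^*\mathrm{Ad}^*(l)\zeta$ and $\varphi_{p,\zeta}(l):=e^{i\langle R(p),\mathrm{Ad}^*(l)\zeta\rangle}\in\bigwedge T_p^*P$ depends smoothly on $l$.

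Finally, I would define $d\mu_{p,\zeta}$ as the pushforward under $\Phi_{p,\zeta}$ of the $\bigwedge T_p^*P$-valued smooth measure $\varphi_{p,\zeta}(l)\,dl$ on $L$. Using the Fourier transform convention of the paper, $\hat{\mu}_{p,\zeta}(y)=\int_{\mathfrak{g}^*}e^{-i\langle \xi,y\rangle}\,d\mu_{p,\zeta}(\xi)$, and the change-of-variables formula gives exactly $\hat\mu_{p,\zeta}(y)=\theta_\zeta(R_y)(p)$. The support of $d\mu_{p,\zeta}$ is contained in the compact set $\Phi_{p,\zeta}(L)\subset\mathfrak{g}^*$, which proves compact support. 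There is no real obstacle here; the only point worth flagging is that one must check that the integration over the compact group $L$ commutes with the form-valued pushforward, which is routine since $\varphi_{p,\zeta}$ is a polynomial (finite sum of terms in $\bigwedge T_p^*P$) with smooth coefficients in $l$.
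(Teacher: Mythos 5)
Your proof is correct and is essentially the paper's argument: both rest on the linearity of $y\mapsto\iota_y\omega(p)$, and both realize $\theta_\zeta(R_y)(p)$ as the Fourier transform of a pushforward of a compactly supported, form-valued measure. The only difference is organizational: the paper first pushes the Haar measure $dl$ forward to the orbit measure $d\beta_\zeta$ on $\mathfrak l^*$ (so that the final pushforward is via $h_p:\mathfrak l^*\to\mathfrak g^*$), while you push forward directly from $L$ via $\Phi_{p,\zeta}=h_p\circ(\mathrm{Ad}^*(\cdot)\zeta)$ — same map, composed in one step.
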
\begin{proof}
Indeed, let $f\in \mathfrak l^*$.
The function $\langle -\iota_y\omega(p),f\rangle$ is linear in
$y\in \mathfrak g$, so we  write  $\langle
-\iota_y\omega(p),f\rangle= \langle y, h(p,f)\rangle$
with $h(p,f)\in \mathfrak g^*$ depending smoothly on
$p,f$.
We see that $$\theta_\zeta(R_y)(p)=\int_{\mathfrak l^*} e^{i\langle y,h(p,f)\rangle}  e^{i\langle f, R\rangle } d\beta_\zeta(f),$$where $d\beta_\zeta(f)$ is  a $L$-invariant measure supported  on the orbit $L\zeta\subset \mathfrak l^*$.

Let us integrate over the fiber of the map $h_p:\mathfrak l^*\to \mathfrak g^*$ given by $f \to h(p,f)=\xi$. We obtain that

\begin{equation}\label{fou}
\theta_\zeta(R_y)(p)=\int_{\mathfrak \g^*} e^{i\langle y,\xi\rangle}  (h_p)_* (e^{i\langle f, R\rangle } d\beta_\zeta(f)).
\end{equation}

In this formula,  $(h_p)_{*} (e^{i\langle f, R\rangle } d\beta_\zeta(f))$
 is a measure supported on the compact set  $h_p(L\zeta)$ as $d\beta_\zeta(f)$ is supported in the compact set $L\zeta$.
In particular, we see that, over a compact subset of $P$,   $y\to \theta_\zeta(R_y)(p)$ is a bounded function of $y$ as well as all its derivatives in $y$ and   estimates are uniforms in $\zeta$ if $\zeta$ varies in a compact set of $\mathfrak l^*$.

\end{proof}

If $[\gamma] \in H_{G,c}^*(M^0)$, we choose a representative $\gamma(y)$ which is a form with compact support on $M$ and depending of $y$ in a polynomial way. Set \begin{equation}
\tilde
\gamma_\zeta(y):=\gamma(y)\theta_\zeta(R_y).
\end{equation}
\begin{proposition} The equivariant form $ \tilde
\gamma_\zeta(y)$
is   of  at most polynomial
growth in $y$. It represents a class in $\mathcal H_{G,c}^{\infty,m}(M^0)$ which does
not depend of the choice of the connection $\omega$ but only
on the choice of the Haar measure $dl$.
\end{proposition}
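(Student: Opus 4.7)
The plan is to verify in sequence: (i) $\tilde\gamma_\zeta(y)$ lies in $\mathcal A^{\infty,m}_{G,c}(M)$; (ii) its equivariant differential vanishes in a neighborhood of $M^0$, so it represents a class in $\mathcal H^{\infty,m}_{G,c}(M^0)$; (iii) the class is independent of $\omega$; (iv) the dependence on the Haar measure $dl$ is only scalar.

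For (i), note that $\theta_\zeta\in C^\infty(\mathfrak l)^L$, so by the Chern--Weil construction recalled in Definition \ref{ary} and the subsequent proposition, $y\mapsto \theta_\zeta(R_y)$ is a well-defined $G$-equivariant closed $C^\infty$-form on $M$. The preceding lemma shows that for each $p$ lying in any fixed compact subset of $M$, $\theta_\zeta(R_y)(p)$ and all its $y$-derivatives are bounded uniformly in $y$, since \eqref{fou} expresses this function as the Fourier transform of a compactly supported measure on $\mathfrak g^*$. Since $\gamma(y)$ is polynomial in $y$ and compactly supported on $M$, the product $\gamma(y)\theta_\zeta(R_y)$ is compactly supported on $M$ and of moderate growth in $y$, hence lies in $\mathcal A^{\infty,m}_{G,c}(M)$. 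For (ii), since $D$ is a derivation and $D\bigl(\theta_\zeta(R_y)\bigr)=0$ by Chern--Weil, we have $D\tilde\gamma_\zeta(y)=(D\gamma(y))\,\theta_\zeta(R_y)$, which vanishes on any neighborhood of $M^0$ on which $D\gamma$ vanishes.

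For (iii), let $\omega_0,\omega_1$ be two $G\times L$-invariant connections on $P\to M$. The segment $\omega_t=(1-t)\omega_0+t\omega_1$ is again a connection, and the standard equivariant Chern--Weil transgression—obtained by integrating $\tfrac{d}{dt}\theta_\zeta(R_{y,t})$ against $\dot\omega_t=\omega_1-\omega_0$—produces an equivariant form $\eta(y)$ on $M$ with $C^\infty$-coefficients such that
\[
\theta_\zeta(R_y^{\omega_1})-\theta_\zeta(R_y^{\omega_0})=D\eta(y).
\]
The same Fourier representation \eqref{fou}, applied to the mixed expressions arising in the transgression, shows that $\eta(y)$ has moderate growth in $y$. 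Then
\[
\gamma(y)\bigl(\theta_\zeta(R_y^{\omega_1})-\theta_\zeta(R_y^{\omega_0})\bigr)=\gamma(y)\,D\eta(y)=\pm D\bigl(\gamma(y)\eta(y)\bigr)\mp (D\gamma(y))\,\eta(y).
\]
The first term is the equivariant differential of a compactly supported element of $\mathcal A^{\infty,m}_{G,c}(M)$, hence zero in $\mathcal H^{\infty,m}_{G,c}(M^0)$. The second term has support contained in the support of $D\gamma$, which lies in $M\setminus M^0$, so by definition it also represents the zero class in $\mathcal H^{\infty,m}_{G,c}(M^0)$.

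Finally for (iv), the map $dl\mapsto \theta_\zeta$ is linear, so any other normalization of Haar measure multiplies $\theta_\zeta(R_y)$—and therefore the class of $\tilde\gamma_\zeta$—by the corresponding positive scalar. The main obstacle in the argument is the control of the transgression term: one needs the intermediate form $\eta(y)$ to remain in the moderate-growth class $\mathcal A^{\infty,m}_{G,c}$. This is handled by the same Fourier-type estimate as in the preceding lemma, since $\eta(y)$ is built from the same kind of oscillatory integral over $L\zeta$ that defines $\theta_\zeta(R_y)$.
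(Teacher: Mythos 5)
Your proposal is correct and follows essentially the same route as the paper. The paper's own proof is a one-line citation (growth from ``the preceding discussion,'' independence of $\omega$ ``as in'' the standard Chern--Weil references), and what you have written is precisely the expansion of those citations: the Fourier-measure lemma for the moderate-growth estimate, the Leibniz rule to see that $D\tilde\gamma_\zeta=(D\gamma)\theta_\zeta(R_y)$ vanishes near $M^0$, and the transgression argument for connection-independence, correctly tracking both that the transgression form stays in the moderate-growth class and that the resulting correction terms are either exact in $\mathcal A^{\infty,m}_{G,c}(M)$ or supported in $M\setminus M^0$, hence vanish in the quotient complex.
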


\begin{proof}

The fact that   $\tilde
\gamma_\zeta(y)$  is of at most polynomial growth follows from the preceding discussion. The second statement is proved as  in
(\cite{ber-ver83},\cite{bot-tu}, see \cite{ber-get-ver}).

 \end{proof}

  Remark that  $\theta_0(R_y)={\rm vol}(L,dl) $ where
${\rm vol}(L,dl)$ is the volume of the compact Lie group $L$ for
the Haar measure $dl$ such  that $dl d\zeta$ is the canonical measure
on $T^*L=L\times \mathfrak l^*$ (by right or left trivialization).

\subsubsection{The main formula}
With the notations of the previous paragraph,
given  $[\gamma] \in H_{G,c}^*(M^0)$, we may apply the infinitesimal index
construction  (Theorem \ref{theo:infdexgrowth}) to the cohomology
class $[\tilde \gamma_\zeta]\in \mathcal H_{G,c}^{\infty,m}(M^0)$ of
the equivariant form $\tilde
\gamma_\zeta(y)=\gamma(y)\theta_\zeta(R_y) $ we have:

\begin{theorem}\label{theo:free}
Let $f_1$ be a test function on $\mathfrak l^*$ and $f_2$ be a
test function on $\mathfrak g^*$. Then $\langle
\infdex^{\mu}_G([\tilde\gamma_{\zeta}]),f_2\rangle$ is a smooth
function of $\zeta$ and
\begin{equation}
\label{inni}\langle \infdex_{G\times L}^{ \mu_{G\times L}}(\pi^*([\gamma])),f_1f_2\rangle =i^{\dim L}
\int_{\l^*} \langle
\infdex^{\mu}_G([\tilde\gamma_{\zeta}]),f_2\rangle  f_1(\zeta)
d\zeta.
\end{equation}\end{theorem}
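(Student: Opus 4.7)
The plan is to compute both sides of \eqref{inni} directly, by choosing a convenient action form on $N$ and carrying out the double integral in a specific order so that the Chern--Weil form $\theta_\zeta(R_y)$ emerges naturally from fiber integration along $\pi\colon P\to M$.

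First I reduce, via locality (Proposition \ref{loc}) and the regular-value assumption on $\mu_L$, to the tubular model $N=P\times\mathfrak{l}^*$ with $\mu_L(p,\zeta)=\zeta$. I then fix a $(G\times L)$-invariant connection one-form $\omega$ on the principal bundle $\pi\colon P\to M$ and replace the given action form by $\sigma'_N := \pi^*\sigma_M + \langle\zeta,\omega\rangle$. A short calculation shows that the moment map of $\sigma'_N$ still has $N^0$ as its zero set, so by Theorem \ref{dein} it computes the same infinitesimal index. A representative of $\pi^*[\gamma]$ is $\chi(\zeta)\pi^*\gamma(x)$ for an $L$-invariant cutoff $\chi$ equal to $1$ near $\zeta=0$, and the equivariant differential works out to
\[
D\sigma'_N(x,y) = \pi^*\Omega_M(x) + \langle\zeta, -\iota_x\omega + d\omega\rangle + \langle d\zeta,\omega\rangle + \langle\zeta,y\rangle.
\]

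Next I integrate in stages. The $y\in\mathfrak{l}$ integration only touches $e^{is\langle\zeta,y\rangle}$ and, by Fourier inversion, yields $f_1(s\zeta)$. Expanding $e^{is\langle d\zeta,\omega\rangle} = \prod_j(1+is\,d\zeta_j\wedge\omega^j)$, its top-$d\zeta$ component is proportional to $(is)^r\,d\zeta\wedge V_\omega$ with $V_\omega := \omega^1\wedge\cdots\wedge\omega^r$, which I isolate for integration over $\mathfrak{l}^*$. After the substitution $\zeta\mapsto\zeta'/s$, the measure picks up $s^{-r}$, so $(is)^r\cdot s^{-r}=i^{\dim L}$, while $f_1(s\zeta)=f_1(\zeta')$ and $\chi(\zeta'/s)\to 1$ as $s\to\infty$. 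The key algebraic simplification is that any term of the form $\omega^i\wedge\omega^j$ is annihilated by $V_\omega$, since $V_\omega$ already contains every $\omega^j$; hence
\[
V_\omega\cdot e^{i\langle\zeta',-\iota_x\omega + d\omega\rangle} = V_\omega\cdot e^{i\langle\zeta',R_x\rangle},
\]
with $R_x = -\iota_x\omega + R$ the $G$-equivariant curvature of \eqref{ledc}.

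I then fiber-integrate over $\pi\colon P\to M$. The pullback factors $\pi^*\gamma(x)$ and $e^{is\pi^*\Omega_M(x)}$ are basic, while the $L$-equivariance $l^*R_x = \mathrm{Ad}(l^{-1})R_x$ combined with $V_\omega|_{\text{fiber}}$ being the Haar volume form turns $\pi_*\bigl(V_\omega\,e^{i\langle\zeta',R_x\rangle}\bigr)$ precisely into the $L$-average $\theta_{\zeta'}(R_x)$ from Definition \ref{ary}. Assembling everything, the $s\to\infty$ limit produces
\[
i^{\dim L}\int_{\mathfrak{l}^*} f_1(\zeta')\left[\lim_{s\to\infty}\int_M\int_{\mathfrak g} e^{is\Omega_M(x)}\gamma(x)\,\theta_{\zeta'}(R_x)\,\hat f_2(x)\,dx\right]d\zeta',
\]
and the bracketed quantity is $\langle \infdex_G^\mu([\tilde\gamma_{\zeta'}]),f_2\rangle$ by Theorem \ref{theo:infdexgrowth}; smoothness in $\zeta'$ follows from the explicit formula \eqref{eq:ints} applied to the smooth family $\tilde\gamma_{\zeta'}$. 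The main technical obstacle is the Chern--Weil pushforward identity: careful bookkeeping of $L$-equivariance and of the orientation conventions fixed in the paper (the Haar normalization of $V_\omega$ and the decomposition of $\mathrm{or}(N)$ as $\mathrm{or}(M)\wedge\mathrm{or}(T^*L)$) is needed to recover precisely the constant $i^{\dim L}$ with no residual sign.
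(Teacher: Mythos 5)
Your proposal is correct and follows essentially the same route as the paper's proof: reduce by locality to the tubular model $N=P\times\mathfrak l^*$, pass to the normal-form action form $\eta^*\bar\sigma+\langle\omega,\zeta\rangle$ (justified by the invariance of the infinitesimal index under deformation of $\sigma$ keeping $\mu_t^{-1}(0)$ fixed, Theorem \ref{dein}), do Fourier inversion in the $\mathfrak l$ variable to produce $f_1(s\zeta)$, isolate the top $d\zeta$ component, rescale $\zeta$ to trade the $(is)^r$ for $i^{\dim L}$, kill the $[\omega,\omega]$ term against $V_\omega$ so $d\omega$ can be replaced by $R$, and finally integrate over the $L$-fiber to recognize the Chern--Weil average $\theta_\zeta(R_y)$, invoking the stabilization remark (Remark \ref{rem:stableinfdex} / Theorem \ref{theo:infdexgrowth}) to identify the bracketed integral with $\langle\infdex_G^\mu([\tilde\gamma_\zeta]),f_2\rangle$. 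The only point worth underlining is the one you already flag: the orientation bookkeeping that turns $(-1)^{r(r+1)/2}(is)^r\,V_\omega\wedge d\zeta$ back into a clean $i^{\dim L}$ uses exactly the conventions of Formula \eqref{liofo1} and the normalization of $dl$ by $dl\,d\zeta$ being the canonical $T^*L$ measure, which the paper handles explicitly and which you correctly identify as the delicate part.
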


\bigskip

\begin{remark}
Formula (\ref{infdex1}) is a particular case of the above theorem. Indeed consider $M=T^*L$ with double action of $L\times L$. We take $G=L$ as the first copy, $L$ the second copy acting freely on the right. Then $P=L$ and $M=L/L=\{pt\}$ is a point. The equivariant curvature $R_y$, a form on $P$ with value in $\mathfrak l$, is  $R_y(l)=-ly$  (Formula (\ref{cugu}))  so that $\tilde \gamma_{\zeta}(y)$ is the invariant function $\int_{L}e^{-i\langle \zeta,ly\rangle}dl$ and  $\langle \infdex^{\mu}_G([\tilde\gamma_{\zeta}]),f_2\rangle$ (the distribution  Fourier transform  of the function  $\tilde \gamma_{\zeta}(y)$) is $\int_{L} f_2(-l\zeta)dl$. Theorem above gives
$$\langle \infdex_{G\times L}^{ \mu_{G\times L}}(1),f_1f_2\rangle =i^{\dim L}
\int_{L\times \mathfrak l^*}  f_1(\zeta) f_2(-l\zeta) dl d\zeta$$
which is Formula (\ref{infdex1}).

\end{remark}

Let us first write a corollary of this theorem.

\begin{corollary}\label{cor:free}
Let  $f_2$ be a test function on $\mathfrak g^*$. Then the
distribution $f_1\to \langle \infdex_{G\times
L}^{  \mu_{G\times L}}(\pi^*([\gamma])),f_1f_2\rangle$ on $\mathfrak l^*$ is a
smooth density $D(\zeta)d\zeta$. The value of $D$ at $0$ is equal
to
 $i^{\dim L}{\rm vol}(L,dl) \langle
\infdex^{\mu}_G([\gamma]),f_2\rangle.$
\end{corollary}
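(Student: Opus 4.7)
The plan is that this corollary is essentially an unwinding of Theorem \ref{theo:free} together with the trivial evaluation $\theta_0 = \mathrm{vol}(L,dl)$. First I would read off from Theorem \ref{theo:free} that the distribution $f_1 \mapsto \langle \infdex_{G\times L}^{\mu_{G\times L}}(\pi^*([\gamma])),f_1 f_2\rangle$ is given by integration against the function
$$D(\zeta) := i^{\dim L}\langle \infdex^{\mu}_G([\tilde\gamma_{\zeta}]),f_2\rangle$$
against the Lebesgue measure $d\zeta$. The smoothness of $D$ on $\mathfrak l^*$ is precisely the statement of Theorem \ref{theo:free} that $\zeta \mapsto \langle \infdex^{\mu}_G([\tilde\gamma_{\zeta}]),f_2\rangle$ is smooth; thus the distribution is the smooth density $D(\zeta)\, d\zeta$, establishing the first claim.

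For the value at $\zeta = 0$, I would substitute $\zeta = 0$ directly into the defining formula of $\theta_\zeta$ from \eqref{thzr}: since $\langle x, \mathrm{Ad}^*(l)\cdot 0\rangle = 0$ for every $x \in \mathfrak l$ and $l \in L$, one gets
$$\theta_0(x) = \int_L e^{0}\, dl = \mathrm{vol}(L,dl),$$
so $\theta_0$ is the constant function $\mathrm{vol}(L,dl)$ on $\mathfrak l$. Consequently the equivariant Chern--Weil form $\theta_0(R_y)$ is the constant $\mathrm{vol}(L,dl)$ (its Taylor expansion in \eqref{aryp} degenerates to the $0$-th term, since all derivatives of $a = \theta_0$ vanish), and therefore
$$\tilde\gamma_0(y) = \mathrm{vol}(L,dl)\cdot \gamma(y)$$
as equivariant forms representing a class in $\mathcal H^{\infty,m}_{G,c}(M^0)$.

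Plugging this into the definition of $D(0)$ and invoking the $S[\mathfrak g^*]^G$-linearity of $\infdex^{\mu}_G$ (in particular $\R$-linearity), I would conclude
$$D(0) = i^{\dim L}\,\mathrm{vol}(L,dl)\,\langle \infdex^{\mu}_G([\gamma]),f_2\rangle,$$
which is exactly the asserted formula. There is no real obstacle here: the work was already done in proving Theorem \ref{theo:free} (and the growth/smoothness estimates of the preceding lemma on $\theta_\zeta(R_y)$); the corollary is a specialization $\zeta = 0$ plus the observation that $\theta_0$ is a constant function.
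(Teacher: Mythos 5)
Your proposal is correct and follows the same route the paper intends: read $D(\zeta) = i^{\dim L}\langle \infdex^{\mu}_G([\tilde\gamma_\zeta]),f_2\rangle$ off from Theorem \ref{theo:free} (which also supplies smoothness), and then use the observation $\theta_0(R_y)=\mathrm{vol}(L,dl)$ — stated explicitly in the paper just before the theorem — together with linearity of $\infdex^\mu_G$ to evaluate $D(0)$.
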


We now prove Theorem \ref{theo:free}.

\begin{proof}

Denote by   $\eta:P\times
\mathfrak l^*\to P$ the projection $\eta:(p,\zeta)\mapsto  p ,  $    and  set $\xi=\pi\circ \eta: N\to M,\ \xi(p,\zeta):=\pi(p),\ p\in P,\zeta\in \mathfrak l^*$.

Let $\gamma(y)$ be a compactly supported $G$--equivariant form on $M$
  representative of $[\gamma]$.
Any $G\times L$--equivariant form $\psi$  with compact support on
$N=P\times \mathfrak l^*$  which restricted to $P$ coincides with $\pi^*\gamma$  can be
taken as a representative for the cohomology class  $\pi ^*[\gamma]\in
H_{G\times L,c}^*(N^0).$

In order to construct $\psi$,  take an  $ L$-invariant  function $\rho:\mathfrak l^*\to \mathbb R$ supported near zero and such
that $\rho$ equals 1 on a neighborhood of $0$ and define the form
$\psi$, which is still $L$-invariant and $G$-equivariant by:
\begin{equation}\label{psigamma}\psi(y)(p,\zeta):=
\rho( \zeta )\xi^*\gamma(y).\end{equation}

Recall that  $\overline \sigma$ is the restriction of $\sigma_{N}$ on $P$
and consider  the one form $\eta^*(\overline \sigma)$ on $N=P\times
\mathfrak l^*$,  the pull back of $\overline \sigma$  under the projection $\eta:P\times
\mathfrak l^*\to P$. Let $\omega\in \mathcal A^1(P)\otimes \mathfrak l$
be our connection form.
 Then \begin{lemma}\label{acfo}
$\langle \omega,\zeta\rangle $ is an action form on $N$,
 with moment map for $L$ the second projection.
 Its moment map for $G$ vanishes on $P$.
\end{lemma}

Consider $  \sigma_0 =\sigma_N$ and $\sigma_1=\eta^*(\overline \sigma)+\langle
\omega,\zeta\rangle$ with moment maps $\mu_0,\mu_1$.  \begin{lemma}
The  moment map $\mu_t=t\mu_1+(1-t)\mu_0$  associated to $t\sigma_1+(1-t)\sigma_0$ is such that $\mu_t^{-1}(0)=N^0$ for all $t\in [0,1]$.
\end{lemma}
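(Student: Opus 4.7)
The plan is to show that $\mu_t$ is actually independent of $t$ on the locus where it vanishes, by computing $\mu_0$ and $\mu_1$ separately and comparing them on $P \times \{0\}$. Since both $\sigma_0$ and $\sigma_1$ are manifestly $L$-invariant, so are the interpolating forms, and the $L$-moment map is easy to pin down.

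First I will compute $\mu_1^L$, the $L$-component of the moment map of $\sigma_1 = \eta^*(\bar\sigma) + \langle\omega,\zeta\rangle$. For $x \in \mathfrak l$, Proposition \ref{prop:sigmabar} gives $\iota_x \bar\sigma = 0$, and since $\eta : P \times \mathfrak l^* \to P$ is $L$-equivariant, $\iota_x \eta^*\bar\sigma = \eta^*\iota_x\bar\sigma = 0$. The defining property of a connection form yields $\iota_x \omega = -x$, and because $G$ and $L$ commute (so the vector field $v_x$ acts on the $\mathfrak l^*$-factor only by coadjoint motion, which doesn't contract with the pulled-back form $\langle\omega,\zeta\rangle$ at a point), one gets $\iota_x \langle\omega,\zeta\rangle = -\langle x,\zeta\rangle$. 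Hence $\mu_1^L(p,\zeta) = \zeta = \mu_0^L(p,\zeta)$. Thus the $L$-component of $\mu_t$ is identically $\zeta$ for every $t$, and any zero of $\mu_t$ must lie in $P \times \{0\}$.

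Next I will compute the $G$-components on the slice $P \times \{0\}$. For $y \in \mathfrak g$, since the $G$-action leaves $\mathfrak l^*$ fixed, $v_y$ has no $\mathfrak l^*$-component, so $\iota_y \eta^*\bar\sigma = \eta^* \iota_y \bar\sigma$ and $\iota_y \langle\omega,\zeta\rangle = \langle\iota_y\omega(p),\zeta\rangle$, which vanishes at $\zeta=0$. As $\bar\sigma$ is the restriction of $\sigma_N$ to $P$, one obtains $\mu_1^G(p,0) = \mu_G(p) = \mu_0^G(p,0)$. Consequently $\mu_t^G$ coincides with $\mu_G$ throughout $P \times \{0\}$, independent of $t$.

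Combining the two computations, $\mu_t^{-1}(0) = \{(p,0) : \mu_G(p) = 0\} = N^0$ for every $t \in [0,1]$. There is no real obstacle here: the computation is local, the only delicate point is the bookkeeping in $\iota_y \langle\omega,\zeta\rangle$, where one must use that $G$ acts trivially on the $\mathfrak l^*$ factor so that the $G$-vector field contracts $\omega$ in its $P$-variables only.
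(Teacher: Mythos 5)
Your proof is correct and follows essentially the same route as the paper's (whose proof is a terse two-sentence sketch): show that the $L$-component of $\mu_t$ is the second projection for all $t$, forcing any zero into $P\times\{0\}$, and then check that $\mu_1$ and $\mu_0$ agree on $P$; you simply supply the explicit $\iota_x$-computations that the paper leaves implicit. The one small blemish is the justification ``because $G$ and $L$ commute'' in the computation of $\iota_x\langle\omega,\zeta\rangle$ for $x\in\mathfrak l$: commutativity with $G$ is irrelevant there; the real point (which you do state) is that $\langle\omega,\zeta\rangle$ pulls back from $P$, so the coadjoint-motion component of $v_x$ in the $\mathfrak l^*$ factor contracts to zero, leaving only $\langle\iota_x\omega,\zeta\rangle=-\langle x,\zeta\rangle$ from the connection-form identity.
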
\begin{proof}
This follows from the fact that    the component  under $L$ of these maps is the second projection, so that $\mu_t^{-1}(0)\subset P$ for all $t$ and moreover $\mu_1,\mu_0$ coincide on $P$. Thus $\mu_t^{-1}(0)=P^0=N^0$.
\end{proof}  According to Theorem \ref{dein}, we may thus assume that
$\sigma_N=\eta^*(\overline \sigma)+\langle \omega,\zeta\rangle $ and compute with
this ``normal form" the values of $ \infdex_{G\times L}^{ \mu_{G\times L}}$.

Recall that  $\mu:M\to \mathfrak g^*$  is the moment map relative to $G$  associated to $\sigma_M$. By abuse of notations, we still denote by $\mu$ its pull back by $\pi\eta$ to $N$. This is the moment map associated  to $\eta^*(\overline \sigma)$.

\begin{lemma}
Let  $\Omega:=D\sigma_N, $   for $(x,y)\in\mathfrak
l\oplus \mathfrak g$.  At a point $(p,\zeta)\in P\times \mathfrak l^*$, we have: $$\Omega(x,y)=\langle x,  \zeta\rangle-
\langle \iota_y\omega,\zeta\rangle + \langle y,\mu\rangle +
d\eta^*(\overline \sigma)+ d\langle \omega,\zeta\rangle .$$
\end{lemma}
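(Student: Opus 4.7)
The plan is to compute $\Omega = D\sigma_N$ by unpacking the Cartan differential $D = d - \iota_{(x,y)}$ termwise, applied to $\sigma_N = \eta^*\bar\sigma + \langle \omega,\zeta\rangle$. Since $d$ is linear and $\iota_{(x,y)} = \iota_x + \iota_y$ is linear in $(x,y)\in\mathfrak l\oplus\mathfrak g$, this reduces the computation to four contraction terms plus the obvious exterior derivative. The $d$-part immediately yields $d\eta^*\bar\sigma + d\langle\omega,\zeta\rangle$, matching the last two terms of the claim, so the real work is to evaluate the four contractions.

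First I would handle the contractions by $x\in\mathfrak l$. For $\iota_x\eta^*\bar\sigma$, observe that $\eta_*v_x$ is the vector field $v_x^P$ on $P$, which is tangent to the $L$-fibers of $\pi:P\to M$; since $\bar\sigma = \pi^*\sigma_M$ by Proposition \ref{prop:sigmabar}, this contraction vanishes. For $\iota_x\langle\omega,\zeta\rangle$, the $\zeta$-factor is a coordinate on $\mathfrak l^*$ not touched by $v_x$ (whose $\mathfrak l^*$-component is tangent to coadjoint orbits, but does not produce a covector upon contracting a 1-form valued in the $P$-direction), so this equals $\langle\iota_x\omega,\zeta\rangle = \langle -x,\zeta\rangle = -\langle x,\zeta\rangle$ by the defining property of a connection form. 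Combining with the sign in $D = d - \iota$, these two terms contribute $+\langle x,\zeta\rangle$, which is the first term on the right of the lemma.

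Next I would treat contractions by $y\in\mathfrak g$. Since $G$ commutes with $L$ and the isomorphism $N\simeq P\times\mathfrak l^*$ is $L$-equivariant with trivial $G$-action on $\mathfrak l^*$, the vector field $v_y$ on $N$ is simply the pullback of $v_y^P$ on $P$. For $\iota_y\eta^*\bar\sigma = \eta^*\pi^*(\iota_{v_y^M}\sigma_M)$, using $\iota_{v_y^M}\sigma_M = -\mu(y)$ from \eqref{moma} and the abuse of notation by which $\mu$ denotes its pullback to $N$, I get $\iota_y\eta^*\bar\sigma = -\langle y,\mu\rangle$. For $\iota_y\langle\omega,\zeta\rangle$, the $\zeta$-factor is inert and we simply get $\langle\iota_y\omega,\zeta\rangle$. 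With the minus sign in $D = d - \iota$, these contribute $+\langle y,\mu\rangle - \langle \iota_y\omega,\zeta\rangle$, completing the formula.

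The main subtlety I expect is purely bookkeeping: making sure that (i) $v_x^P$ is indeed vertical for $\pi$ so that the first contraction vanishes cleanly, (ii) the normalization $-\iota_x\omega = x$ from the definition of a connection form is applied with the correct sign, and (iii) the identification of $\mu$ on $N$ with $\mu\circ\pi\circ\eta$ is consistent with the sign convention for $\mu$. No estimates or analytic issues arise — this is a direct algebraic computation in the Cartan model, and once the four contractions are laid out with their signs, assembling them reproduces the stated formula.
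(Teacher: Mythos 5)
Your proposal is correct and follows essentially the same route as the paper: both split $\sigma_N=\eta^*\bar\sigma+\langle\omega,\zeta\rangle$ and compute the moment map (equivalently, the contractions $-\iota_x,-\iota_y$) termwise, with the connection-form identity $-\iota_x\omega=x$ supplying the $\langle x,\zeta\rangle$ piece and the vanishing of the $L$-moment map on $P=\mu_L^{-1}(0)$ killing $\iota_x\eta^*\bar\sigma$. The paper's proof is just a terser version of the same four-contraction bookkeeping you spell out.

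One very small stylistic point: to justify $\iota_x\eta^*\bar\sigma=0$ you route through $\bar\sigma=\pi^*\sigma_M$ and verticality of $v_x^P$; Proposition \ref{prop:sigmabar} actually asserts $\iota_x\bar\sigma=0$ directly (as a consequence of $P=\mu_L^{-1}(0)$) and derives $\bar\sigma=\pi^*\sigma_M$ from that, so invoking that vanishing directly is the shorter path and avoids the slight appearance of circularity. This does not affect the correctness of your argument.
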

\begin{proof}By the definition of a connection form (for the action of $L$), we have
$
\langle x,  \zeta\rangle =-\langle  \iota_x\omega,  \zeta\rangle $ so $
\langle x,  \zeta\rangle -
\langle \iota_y\omega,\zeta\rangle $ is the value of the moment map  at $(x,y)$  of $\langle \omega,\zeta\rangle$. As for $\eta^*(\overline \sigma)$, by definition of $P=\mu_L^{-1}(0)$, the part relative to $L$ of its moment map equals to 0.
\end{proof}
 We write
$\Omega(x,y)=\langle x, \zeta\rangle+\Omega'(y)$ with

$$\Omega'(y)= -\langle \iota_y\omega,\zeta\rangle +
\langle y,\mu\rangle +  \eta^*d(\overline \sigma)+  d\langle \omega,\zeta\rangle$$  independent of $x$.
We have
 \begin{equation}
\label{omepr}\Omega'(y)=\eta^*(D\overline \sigma) -\langle \iota_y\omega,\zeta\rangle+ d\langle \omega,\zeta\rangle.
\end{equation}

For $s$ sufficiently large,
$$
 \langle \infdex_{G\times
L}^{ \mu_{G\times L}}(\pi^*([\gamma])),f_1f_2\rangle= I(s)$$ with
$$I(s)=\int_N\int_{\mathfrak g\times \mathfrak l}e^{is\Omega(x,y)}\psi(y)
\hat f_1(x)\hat f_2(y)dxdy.$$

Applying  Fourier inversion
$$\int_{\mathfrak l}e^{is\langle x, \zeta\rangle  }\hat f_1(x)dx=f_1(s\zeta), $$
we obtain that
$$I(s)= \int_N\int_{\mathfrak g}e^{is\Omega'(y)}\psi(y)f_1(s\zeta) \hat
f_2(y)dy $$ where $\psi(y)(p,\zeta)=\rho( \zeta )\xi^*\gamma(y)$
is defined by Formula (\ref{psigamma}).
\medskip

Write the connection form $\omega=\sum_{i=1}^r \omega_i e_i$ on a oriented  basis $\{e_1,\ldots
,e_r\}$ of $\l$, and set $\zeta_i=\langle e_i,\zeta \rangle$ for
$i=1,\ldots, r$.

 We have $\langle \omega,\zeta\rangle
=\sum_{i=1}^r \zeta_i \omega_i $ and thus
\begin{equation}\label{omegazeta}
d\langle \omega,\zeta\rangle =\sum_{i=1}^r\zeta_id \omega_i
+\sum_{i=1}^r d\zeta_i \wedge \omega_i.\end{equation}

Let us now integrate along the fiber $\mathfrak l^*$  of the
projection $\eta:N=P\times \mathfrak l^*\to P$. We thus need to identify the
highest term of  $e^{is\Omega'(y)}$ in the $d\zeta_i$. By
 \eqref{omegazeta},\eqref{omepr}  this highest term equals
 $$  (is)^r d\zeta_1\wedge \omega_1\wedge \cdots \wedge d\zeta_r\wedge  \omega_r= (-1)^{\frac{r(r+1)}{2}} (is)^r   V_{\omega}  \wedge d\zeta  $$   where we set
 $ V_{\omega}:=\omega_1\wedge \omega_2\wedge \cdots\wedge  \omega_r$ and  $d\zeta:=d\zeta_1\wedge\cdots \wedge d\zeta_r$.
   We obtain
$$I(s)= \int_N\int_{\mathfrak g}e^{is\Omega'(y)}\psi(y)f_1(s\zeta) \hat f_2(y)dy $$
$$=(-1)^{\frac{r(r+1)}{2}}  i ^r \int_{P\times \mathfrak g} e^{is D  \overline \sigma}\gamma(y)\hat f_2(y)
\left(\int_{\l^*}  s^{r}e^{-is \langle \iota_y\omega ,
\zeta\rangle } e^{is\langle d\omega,\zeta \rangle } \rho(\zeta)
f_1(s\zeta)   V_{\omega}  \wedge d\zeta
\right)dy.$$ In the integral on $\l^*$, we change $\zeta$ to
$s\zeta$ and obtain
$$(-1)^{\frac{r(r+1)}{2}}  i ^r \int_{P\times \mathfrak g}
e^{is D \overline \sigma }\gamma(y)\hat f_2(y) \left(\int_{\l^*} e^{-i
\langle \iota_y\omega , \zeta\rangle } e^{i\langle d\omega,\zeta
\rangle } \rho(\zeta/s) f_1(\zeta)   V_{\omega}  \wedge d\zeta   \right) dy.$$

On the compact support of $f_1(\zeta)$, if $s$ is sufficiently
large, $\rho(\zeta/s)=1$. Also we may replace $d\omega$ by $R$ as
$R-d\omega=\frac{1}{2}[\omega,\omega]$ is annihilated by wedge
product with $\omega_1\wedge \omega_2\wedge \cdots\wedge \omega_r$
and obtain (for $s$ sufficiently large): \label{before}
$$\langle \infdex_{G\times L}^{\mu_{G\times L}}\psi,f_1f_2\rangle$$
$$=(-1)^{\frac{r(r+1)}{2}}i^r
\int_{N}\int_{\mathfrak g} e^{is \eta^*D\overline \sigma} \gamma(y)\hat f_2(y)
e^{i \langle R_y,\zeta\rangle }   f_1(\zeta)   V_{\omega}  \wedge d\zeta  dy.$$

Now consider the fibration $N\to M\times \l^*$ with fiber $L$. On each fiber, the form $V_{\omega} =\omega_1\wedge \omega_2\wedge \cdots\wedge
\omega_r$ induces an orientation and restricts to a  Haar measure $dl$ on $L$.
Let us now integrate
over the fiber.
 Recall that $\sigma_M$ denotes the action form on $M$. Let  $\Omega_M:=D\sigma_M$, we have $\pi^*\sigma_M=\bar\sigma,   \eta^*D\overline \sigma=\eta^*\pi^*\Omega_M$. By Formula \eqref{liofo1} recalling that $r=\dim L$  and
 using Formula \eqref{thzr},  we finally obtain that $I(s)$ is equal to
$$i^{\dim L}\int_{\mathfrak l^*}\left(\int_{M}\int_{ \mathfrak g}
e^{is \Omega_M(y)} \gamma(y)\theta_{\zeta}(R_y)\hat f_2(y)dy
\right)f_1(\zeta)d\zeta.$$

Remark that when $\zeta$ varies in the  compact support
of $f_1$, and over a compact subset $K$ of $M$, the Fourier
transform (in $y$) of $\theta_{\zeta}(R_y)$ stays supported on a
fixed compact subset of $\mathfrak g^*$. Indeed, using Formula
(\ref{fou}), we see that the Fourier transform of
$\theta_\zeta(R_y)$ is supported on the compact subset
$h(\pi^{-1}K,L\zeta)$.
 By Remark \ref{rem:stableinfdex}, for $s>>s_0$
\begin{equation}
\label{ryz}\int_{M\times
\mathfrak g} e^{is \Omega_M(y)} \gamma(y)\theta_{\zeta}(R_y)\hat
f_2(y)dy=\infdex^{\mu}_G([\tilde\gamma_{\zeta}],f_2\rangle
\end{equation} for any
$\zeta$ in the support of $f_1$.

 Thus we obtain our claim.
\end{proof}

%\subsubsection{Applications}
%We generalize  the computation for $T^*S^1$  of Example \ref{esse1}.
%
%
%
%
%
%
%
%
%
%
%
%
%
%
%
%
% \begin{example}
%Consider $N=T^*G$,  with left and right action of
%$G$ and moment map $\mu=\mu_\ell+\mu_r$.
%\end{example}Consider the  $G\times G$
%equivariant form $1\in H_{G,c}^*(G)$ on $(T^*G)^0=G$.
%\begin{theorem}\label{GG}  $$\langle \infdex_{G\times G}^{\mu}(1),f_1f_2\rangle =i^{\dim G}
%\int_{\mathfrak g^*}\int_{G}  f_1(g\zeta) f_2(-\zeta) dg d\zeta.$$
%\end{theorem}
%
%Here the measure $ dgd\zeta$ is the canonical measure on $T^*G$.
%
%
%
%
%
%\begin{proof}
%We apply the previous construction to $T^*G$ with free right
%action of $G$. Then $P=G$ and $M$ is a point.
% Thus $1$ is the lift
%of $1$ and we apply Theorem \ref{theo:free}. By Example \ref{gecc}, the equivariant
%curvature $R_y$ of the connection form associated to the right action, at the point $p\in G$ is given by \eqref{cugu} and is $R_y=-Ad_p^{-1}y$.
%
%Thus $\theta_\zeta(R_y)=
% \int_G e^{-i\langle p^{-1}y,g\zeta\rangle }dg=\int_G e^{-i\langle y,g\zeta\rangle } dg$.
%In order to apply  Formula (\ref{inni}), we compute $\langle \infdex_{G }^{\mu_L}\theta_\zeta(R_y), f_2\rangle$. Apply Formula \eqref{ryz}
%$$\infdex^{\mu}_G(\theta_\zeta(R_y),f_2\rangle=\int_{
%\mathfrak g}    \theta_{\zeta}(R_y)\hat
%f_2(y)dy= \int_{
%\mathfrak g}   \int_G e^{-i\langle y,g\zeta\rangle } dg\hat
%f_2(y)dy $$
%$$ =
% \int_G f_2(-g\zeta) dg .$$
% The claim follows.\end{proof}
%
%
%
%
Another important particular case  of the free action property is
when $G$ is trivial. We then have $y=0$ in all the steps of the
proof of Theorem \ref{theo:free}. We summarize the result that we
obtain in this particular  case of Theorem \ref{theo:free}.
 Let $N$ be an oriented
$L$-manifold with action form, and assume that  the group  $L$
acts freely on $N$ and that $0$ is a regular value of $\mu_L$. Let
$M=N^0/L$ and let $[\gamma]\in H_{G,c}^*(N^0)=H^*_c(M)$.

Let $R$ be the curvature of the fibration $N^0\to M$.
  For any
$\zeta\in \mathfrak l^*$, we consider the closed differential form
on $M$ given by
\begin{equation}\label{ayzero}
\theta_\zeta(R)=\int_{L}
  e^{i\langle R,l\zeta\rangle} dl.
\end{equation}
Here, as $R$ is a $\mathfrak l$ valued two form, $\theta_\zeta(R)$
is a polynomial function of $\zeta.$

Then we obtain
\begin{proposition}

 The distribution $\infdex_L^{\mu_L}([\gamma])$ is a polynomial
density on $\mathfrak \l^*$. More precisely
 $$\langle \infdex_L^{\mu_L}([\gamma]),f_1\rangle =i^{\dim L}
 \int_{\mathfrak l^*} \left(\int_M \gamma
\theta_\zeta(R)\right) f_1(\zeta) d\zeta.$$
\end{proposition}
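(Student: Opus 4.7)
The plan is to recognize this proposition as the direct specialization of Theorem \ref{theo:free} to the case where the group ``$G$'' appearing in that theorem is the trivial group $\{1\}$, and then to simplify formula \eqref{inni} accordingly.

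First I would set $G=\{1\}$ throughout. In this case $\mathfrak{g}=0$, so the variable $y$ disappears from the entire construction: the equivariant curvature $R_y$ reduces to the ordinary curvature $R$, and the equivariant form $\tilde\gamma_\zeta(y)=\gamma(y)\theta_\zeta(R_y)$ collapses to the $\zeta$-dependent closed form $\gamma\,\theta_\zeta(R)$ on $M$ appearing in \eqref{ayzero}. The pullback isomorphism $\pi^*\colon H^*_c(M)\to H^*_{L,c}(N^0)$ of Proposition \ref{pro:free} identifies $[\gamma]$ with its lift, and the test function $f_2$ on $\mathfrak{g}^*=\{0\}$ becomes a trivial constant. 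By Example \ref{Gtrivial}, the inner pairing $\langle \infdex^{\mu}_{\{1\}}([\tilde\gamma_\zeta]),1\rangle$ reduces to the plain integral $\int_M \gamma\,\theta_\zeta(R)$.

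Substituting these simplifications into \eqref{inni} yields the claimed identity
$$\langle \infdex_L^{\mu_L}([\gamma]),f_1\rangle = i^{\dim L}\int_{\mathfrak{l}^*}\Bigl(\int_M \gamma\,\theta_\zeta(R)\Bigr)f_1(\zeta)\,d\zeta.$$
To confirm that this really is a polynomial density on $\mathfrak{l}^*$, I would expand $\theta_\zeta(R)=\int_L e^{i\langle R,{\rm Ad}^*(l)\zeta\rangle}\,dl$ as a power series in $\zeta$. Because $R$ is an $\mathfrak{l}$-valued $2$-form on the finite-dimensional manifold $M$, its components are nilpotent in the exterior algebra, so only finitely many terms of the exponential expansion survive after wedging with $\gamma$ and integrating over $M$, producing a polynomial in $\zeta$. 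I do not expect any real obstacle here; the only point worth a brief sanity check is that the stabilization argument of Theorem \ref{theo:free} (via Remark \ref{rem:stableinfdex}) continues to apply in the absence of $y$, which is automatic since $\gamma$ already has compact support on $M$.
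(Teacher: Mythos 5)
Your proposal is correct and coincides with the paper's own reasoning: the paper derives this proposition precisely by specializing Theorem \ref{theo:free} to trivial $G$, setting $y=0$ throughout (and the polynomiality remark, following from nilpotence of the $\mathfrak l$-valued two-form $R$, is also made by the authors just before the statement).
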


In particular  the value of $\infdex_L^{\mu_L}([\gamma])$  at $0$
is well defined and computes the integral on the reduced space
$\mu_L^{-1}(0)/L$ of the compactly supported cohomology class
associated to $[\gamma]$. This is    essentially Witten
localization formula \cite{wit},\cite{jef-kir}.

\subsection{Extension of the properties of the infinitesimal index}

We have extended the definition of the infinitesimal index to
$\mathcal H_{G,c}^{\infty,m}(M^0)$.
Analyzing the proofs of the  properties  {\em  locality, product, the map $i_!$}, we see that   these properties hold for the infinitesimal index map on  $\mathcal H_{G,c}^{\infty,m}(M^0)$. The proofs for the restriction property and the free action extend, provided we are in the situation of Remark \ref {rem:stableinfdex}:  we consider the infinitesimal index on classes $[\alpha] \in H_{G,c}^{\infty,m}(M^0)$ such that the Fourier transform of $\alpha(x)$ is a distribution with compact support on $\mathfrak g^*$, so that the infinitesimal index stabilizes for $s$ large.
This will be always the situation in the applications to index formulae.

\section{Some consequences of the functorial properties of the
infinitesimal index}

We list here some corollaries of the functorial properties:
excision, product, restriction, push-forward, free action proved
in the preceding section.

\subsection{Diagonal action and convolution}

Consider two $G$  action manifolds $M_1,M_2$ with moment maps
$\mu_1,\mu_2$ with zeroes $M_1^0,M_2^0$. Let $\Delta$  be the diagonal subgroup. The moment map
for $\Delta$ is $\mu_1+\mu_2$.

 Let us assume that $(M_1\times
M_2)_{\Delta}^0=M_1^0\times M_2^0$.
If $\alpha\in H_{G,c}^*(M^0_1)$ and $\beta\in H_{G,c}^*(M^0_2)$,
we may apply the product property (Proposition \ref{product}) and the restriction property (Theorem \ref{restriction}). As     the
restriction map is such that $r^*f(\xi_1,\xi_2)=f(\xi_1+\xi_2)$ ($\xi_1,\xi_2\in \mathfrak g^*$), we obtain the following proposition.

 \begin{proposition}\label{convol}
 Under the hypothesis
$(M_1\times M_2)_{\Delta}^0=M_1^0\times M_2^0$,
 the infinitesimal index $\infdex_{\Delta}^{\mu_1+\mu_2}(\alpha_1\wedge \alpha_2)$
 is the convolution product
$\infdex^{\mu_1}_G(\alpha_1)*\infdex^{\mu_2}_G(\alpha_2)$ of  the distributions
$\infdex^{\mu_1}_G(\alpha_1)$ and $\infdex^{\mu_2}_G(\alpha_2)$.
\end{proposition}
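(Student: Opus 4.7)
The plan is to combine the product property (Proposition \ref{product}) with the restriction property (Theorem \ref{restriction}), applied to the subgroup $\Delta \subset G\times G$.

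First, consider $M_1\times M_2$ as a $G\times G$-manifold with the sum action form $\sigma_1+\sigma_2$, whose moment map is $\mu_1\oplus\mu_2:M_1\times M_2\to\mathfrak g^*\oplus\mathfrak g^*$. The zero set of $\mu_1\oplus\mu_2$ is exactly $M_1^0\times M_2^0$, and Proposition \ref{product} gives
\[
\infdex_{G\times G}^{\mu_1\oplus\mu_2}(\alpha_1\wedge\alpha_2)\;=\;\infdex_G^{\mu_1}(\alpha_1)\otimes\infdex_G^{\mu_2}(\alpha_2)
\]
as a distribution on $\mathfrak g^*\oplus\mathfrak g^*$.

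Next, take $L=\Delta\subset G\times G$. The inclusion of Lie algebras is $x\mapsto(x,x)$, so the projection $p:\mathfrak g^*\oplus\mathfrak g^*\to\mathfrak g^*$ dual to this inclusion is the sum map $(\xi_1,\xi_2)\mapsto\xi_1+\xi_2$, and the moment map for the $\Delta$-action is indeed $p\circ(\mu_1\oplus\mu_2)=\mu_1+\mu_2$. The hypothesis $(M_1\times M_2)_\Delta^0=M_1^0\times M_2^0$ says precisely that $(\mu_1+\mu_2)^{-1}(0)=(\mu_1\oplus\mu_2)^{-1}(0)=M_1^0\times M_2^0$, so I may take the intermediate closed set of Theorem \ref{restriction} to be $Z_{G\times G}=M_1^0\times M_2^0$; with this choice both maps $j$ and $r$ of that theorem are tautologies (or just a change of the acting group from $G\times G$ to $\Delta$). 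Applying Theorem \ref{restriction} then yields
\[
p_*\bigl(\infdex_{G\times G}^{\mu_1\oplus\mu_2}(\alpha_1\wedge\alpha_2)\bigr)\;=\;\infdex_\Delta^{\mu_1+\mu_2}(\alpha_1\wedge\alpha_2).
\]

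Finally, I invoke the elementary fact that the push-forward of an external product $T_1\otimes T_2$ under the addition map $(\xi_1,\xi_2)\mapsto\xi_1+\xi_2$ is, by definition, the convolution $T_1*T_2$:
\[
\langle p_*(T_1\otimes T_2),f\rangle=\langle T_1\otimes T_2,\,f(\xi_1+\xi_2)\rangle=\langle T_1*T_2,f\rangle.
\]
Combined with the two displays above, this gives the claim.

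The one point that requires a small check, rather than a black-box application of earlier results, is that the distribution $\infdex_{G\times G}^{\mu_1\oplus\mu_2}(\alpha_1\wedge\alpha_2)$ satisfies the compact-support-along-the-fibers hypothesis in Theorem \ref{restriction} (so that $p_*$ is defined); this is the main technical point, but it follows from the first half of the proof of that theorem since $\alpha_1\wedge\alpha_2$ is compactly supported on $M_1\times M_2$ and $D(\alpha_1\wedge\alpha_2)$ vanishes on a neighborhood of $M_1^0\times M_2^0=(\mu_1+\mu_2)^{-1}(0)$. Apart from this verification the argument is purely formal.
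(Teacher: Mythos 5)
Your proof follows exactly the paper's own argument: the paper derives Proposition \ref{convol} by combining the product property (Proposition \ref{product}) with the restriction property (Theorem \ref{restriction}) for $\Delta\subset G\times G$, observing that the dual restriction map is the sum $(\xi_1,\xi_2)\mapsto\xi_1+\xi_2$, whose push-forward turns external tensor product into convolution. Your additional remark about checking the compact-support-along-fibers hypothesis is a reasonable elaboration of a point the paper leaves implicit, but the route is the same.
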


Let us give an important example of this situation.

Let $M_X$ be a complex representation space for the action of a
torus $G$, where $X=[a_1,a_2,\ldots, a_m]$ is a list of nonzero weights
$a_i \in \hat G\subset \mathfrak g^*$.
We assume that $X$ spans a pointed cone
in $\mathfrak g^*$. Recall the definition of the {\it multivariate
spline $T_X$}, it is a tempered distribution defined by:
\begin{equation}\label{multiva}
\langle T_X\,|\,f\rangle = \int_0^\infty\dots\int_0^\infty
f(\sum_{i=1}^mt_i a_i)dt_1\cdots dt_m.
\end{equation}

Let us consider on $M_X=\mathbb C^m$ the action form  such that
$\mu(z_1,\ldots, z_m)=\sum_{i}\frac{|z_i|^2}{2}a_i$. Then
$M_X^0=\{0\}$ and the class $1$ is a class in $H_{G,c}^*(M_X^0)$.
Using our computation in Example \ref{Heaviside} of
$\infdex_G^{\mu}(1)$ in the case of $\mathbb R^2=\mathbb C$, we
obtain the following formula.

\begin{proposition}

$$\infdex_G^{\mu}(1)=(2\pi i)^mT_X.$$

\end{proposition}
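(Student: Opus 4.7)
The plan is to reduce the computation to the single-variable case of Example \ref{Heaviside} and then invoke the convolution property (Proposition \ref{convol}). The decomposition $M_X = \prod_{i=1}^m \mathbb{C}_{a_i}$ is $G$-equivariant and realizes the action form $\sigma$ and moment map $\mu$ as sums of those on the individual factors, so each factor $\mathbb{C}_{a_i}$ carries the action form and moment map $\mu_i(z_i) = \tfrac{|z_i|^2}{2} a_i$ with $(\mathbb{C}_{a_i})^0 = \{0\}$, and the class $1 \in H^*_{G,c}(M_X^0)$ is the external product of the classes $1_i \in H^*_{G,c}(\{0\})$.

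First I would verify the hypothesis of Proposition \ref{convol}, namely $(M_X)^0_\Delta = \prod_i (\mathbb{C}_{a_i})^0 = \{0\}$. Since $X$ spans a pointed cone in $\mathfrak{g}^*$, the relation $\sum_i \tfrac{|z_i|^2}{2} a_i = 0$ with non-negative coefficients forces every $|z_i|^2 = 0$, giving the required equality.

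Next, on each factor $\mathbb{C}_{a_i}$ I would compute $\infdex_G^{\mu_i}(1)$ directly, mimicking Example \ref{Heaviside} with a general weight $a_i$ in place of the single rotation generator. Expanding $e^{is\Omega(x)} = e^{i s \langle \tfrac{|z_i|^2}{2} a_i,\, x\rangle}(1 + i s\, dv_1\wedge dv_2)$, only the top-degree piece on $\mathbb{C}$ contributes, and Fourier inversion on $\mathfrak{g}$ gives
\begin{equation*}
\int_{\mathfrak{g}} e^{i s \langle \tfrac{|z_i|^2}{2} a_i,\, x\rangle} \hat f(x)\, dx = f\!\left(s\tfrac{|z_i|^2}{2} a_i\right).
\end{equation*}
Passing to polar coordinates $(r,\theta)$ on $\mathbb{C}$ and substituting $u = s r^2/2$ absorbs the $s$ prefactor, yielding the $s$-independent value
\begin{equation*}
\langle \infdex_G^{\mu_i}(1), f\rangle = 2\pi i \int_0^\infty f(u\, a_i)\, du.
\end{equation*}

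Finally I would invoke Proposition \ref{convol}: since $1 = 1_1 \wedge \cdots \wedge 1_m$ as an external product and the diagonal hypothesis is satisfied, $\infdex_G^{\mu}(1)$ is the convolution $T_1 * \cdots * T_m$ of the distributions $T_i(f) = 2\pi i \int_0^\infty f(u a_i)\, du$. Unwinding the convolution yields
\begin{equation*}
\langle \infdex_G^{\mu}(1), f\rangle = (2\pi i)^m \int_0^\infty\!\cdots\!\int_0^\infty f\!\left(\sum_{i=1}^m t_i a_i\right) dt_1\cdots dt_m,
\end{equation*}
which is exactly $(2\pi i)^m \langle T_X, f\rangle$ by the definition \eqref{multiva}. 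The only technical point to watch is Step 2, where one must confirm that the one-factor computation from Example \ref{Heaviside} goes through verbatim for a torus $G$ and an arbitrary weight $a_i$; this is automatic from the Fourier inversion step above and requires no further argument.
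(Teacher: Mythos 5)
Your proposal is correct and follows essentially the same route the paper intends. The paper's own ``proof'' is only a one-line gesture (``Using our computation in Example \ref{Heaviside}\ldots'' after placing the result under the heading of the convolution property), and your write-up is precisely the argument being gestured at: factor $M_X=\prod_i\mathbb{C}_{a_i}$, redo the Example \ref{Heaviside} computation on each factor with $\tfrac{|z_i|^2}{2}a_i$ in place of the scalar moment map to get $2\pi i\int_0^\infty f(u\,a_i)\,du$, verify via the pointed-cone hypothesis that $(M_X)^0_\Delta=\prod_i\{0\}=\{0\}$, and then iterate Proposition \ref{convol} to land on $(2\pi i)^m T_X$. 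One small point you could make explicit: Proposition \ref{convol} is stated for two factors, so the iteration needs the diagonal hypothesis at each partial product $\prod_{i\le k}\mathbb{C}_{a_i}$; this holds because pointedness of the cone spanned by $X$ is inherited by every sub-list of $X$. Your remark at the end (that the single-factor computation really does carry over verbatim with a weight $a_i$, via the same Fourier inversion and $u=sr^2/2$ substitution) is exactly the check the paper leaves implicit.
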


We will use this calculation in \cite{dpv2} to identify
$H_{G,c}^*((T^*M_X)^0)$ to a space of spline distributions on
$\mathfrak g^*$.

\bigskip

Another example that we will use in Subsection \ref{maxtori}
  is
the case were one of the action forms, say $\sigma_1$, is equal to
$0$, so that $\mu_1=0$ and $\mu$ is the pullback of $\mu_2$.
Then $$(M_1\times M_2)_\Delta^0=M_1\times
M_2^0.$$

In this case, the space  $H_{G,c}^*(M_1^0)$ is simply
$H_{G,c}^*(M_1)$ and $\int_{M_1}\alpha_1(x)$ is a polynomial
function of $x\in\mathfrak g$. Thus $\infdex^{0}_G(\alpha_1)$, the Fourier transform, is a
distribution of support $0$ on $\mathfrak g^*$.
\begin{corollary}\label{preinf}
$${\rm infdex}_{\Delta}^{\mu}[\alpha_1 \times\alpha_2]=
\infdex^{0}_G(\alpha_1)*\infdex^{\mu_2}_G(\alpha_2).$$
\end{corollary}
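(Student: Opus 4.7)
The plan is to derive the corollary as a direct specialization of Proposition \ref{convol}, once the hypothesis on zero sets is verified and the distribution $\infdex^{0}_G(\alpha_1)$ is correctly identified with the Fourier transform of a polynomial.

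First I would check that the hypothesis of Proposition \ref{convol} is satisfied in this degenerate setting. Since $\sigma_1=0$, the moment map $\mu_1$ vanishes identically on $M_1$, so $M_1^0=M_1$. Consequently the diagonal zero set
\[
(M_1\times M_2)_\Delta^0=\{(m_1,m_2):\mu_1(m_1)+\mu_2(m_2)=0\}=M_1\times \mu_2^{-1}(0)=M_1^0\times M_2^0,
\]
which is exactly the hypothesis under which Proposition \ref{convol} applies. In particular $\alpha_1\in H^{*}_{G,c}(M_1^0)=H^{*}_{G,c}(M_1)$ means, by Definition \ref{due}, that $D\alpha_1$ has support in the empty set $M_1\setminus M_1^0$, i.e.\ $\alpha_1$ is an honest equivariantly closed compactly supported form on $M_1$.

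Next I would apply Proposition \ref{convol} directly, which yields
\[
\infdex_{\Delta}^{\mu}[\alpha_1\times\alpha_2]=\infdex^{0}_G(\alpha_1)*\infdex^{\mu_2}_G(\alpha_2),
\]
since $\mu=\mu_1+\mu_2=\mu_2$ (pulled back to the product) and $\mu_1=0$. The only remaining content is the sentence preceding the corollary, which asserts that $\infdex^{0}_G(\alpha_1)$ is supported at $0\in\mathfrak g^*$. To justify this I would invoke Theorem \ref{theowit1} (Witten's principle) with $\sigma_1=0$: then $\Omega(x)\equiv 0$, $M_1^0=M_1$, the class $[\alpha_1]$ passes unchanged to $H^{*}_{G,c}(M_1^0)$, and the theorem gives
\[
\mathcal F\bigl(\infdex^{0}_G(\alpha_1)\bigr)(x)=\int_{M_1}\alpha_1(x)=:I(x).
\]
Because $\alpha_1$ is compactly supported on $M_1$ and depends polynomially on $x\in\mathfrak g$, the function $I(x)$ is a $G$-invariant polynomial; hence $\infdex^{0}_G(\alpha_1)$, as the inverse Fourier transform of a polynomial, is a finite sum of derivatives of $\delta_0$, i.e.\ a distribution supported at the origin of $\mathfrak g^*$.

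There is no real obstacle here: Proposition \ref{convol} does all the work, and the extra remark about the support of $\infdex^{0}_G(\alpha_1)$ is an immediate consequence of Theorem \ref{theowit1} together with the observation that integration over $M_1$ of a compactly supported equivariant form produces a polynomial on $\mathfrak g$. The only subtle point worth flagging in the writeup is that the hypothesis of Proposition \ref{convol} is \emph{not} literally $\mu_2^{-1}(0)=M_2^0$ inside a product, but rather that the full diagonal zero set coincide with $M_1^0\times M_2^0$; the collapse $M_1^0=M_1$ here makes both descriptions agree.
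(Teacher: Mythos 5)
Your proof is correct and follows essentially the same route as the paper: you verify that $\sigma_1=0$ forces $M_1^0=M_1$ so that $(M_1\times M_2)_\Delta^0=M_1^0\times M_2^0$, invoke Proposition \ref{convol}, and then justify the support-at-origin remark via Theorem \ref{theowit1} applied with $\Omega\equiv 0$, exactly as the paper's preamble to the corollary does. There is nothing to add.
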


\subsection{Induction of distributions}\label{induction0} Assume that $L\subset G$ is a subgroup, let $\mathfrak l\subset \mathfrak g$ be the corresponding Lie  algebras.
 Choose  Lebesgue measures on $\g$, and  $\mathfrak l$  by fixing translation invariant top differential forms. This determines dual measures and forms on  $\mathfrak g^*,$ $\mathfrak l^*$ and a Haar measure $dg$ on $G$.
If $p$ is the restriction map $\mathfrak \g^*\to \mathfrak l^*$,
we let $p_*$ be the integration over the fiber (with respect to the chosen forms and orientations). It sends a test
function on $\mathfrak g^*$ to a test function on $\mathfrak l^*$.
Let
\begin{equation}
\label{Aver}A(f)(\xi)=\int_{G} f(g\xi)dg.
\end{equation}
The operator $A$ transform a test function on
$\mathfrak g^*$ to an invariant test function on $\mathfrak g^*$.

\begin{definition}\label{inductiond}
For a distribution $V$    on $\mathfrak l^*$, we define the
$G$-invariant distribution  ${\rm Ind}_{\mathfrak l^*}^{\mathfrak
g^*} V$  on $\mathfrak g^*$ by
 $$\langle {\rm Ind}_{\mathfrak l^*}^{\mathfrak g^*} V,f\rangle={\rm vol}(L,dl)^{-1}
 \langle V ,  p_*(A(f)) \rangle,$$
 $f$  being a  test function on $\mathfrak g^*$.

\end{definition}

It is easy that $  {\rm Ind}_{\mathfrak l^*}^{\mathfrak g^*} V$ is independent of the choices of measures.

\subsection{Induction of action manifolds}\label{induction}
Assume that $L\subset G$ is a subgroup. Take $M$ a $L$
manifold with action form $\sigma$ and moment map $\mu_L$.

Consider  $T^*G$ as a $G \times L$  action manifold where $G$ acts on the
left and $L$ on the right, and  the action form $\omega$  is the canonical one form  on $T^*G$.

\subsubsection{The induced action
manifold}
Set $N:=T^*G\times M$ and  $p_1,p_2$ be
the first and second projection of this product manifold. We consider
the action form $\psi=p_1^*\omega+p_2^*\sigma$ on $N$, and denote by
$\tilde \mu_{G\times L}={\tilde \mu}_G\oplus {\tilde \mu}_L$ the
corresponding moment map.\smallskip

Let us
   trivialize  $T^* G=G\times
\mathfrak g^*$ using left trivialization (\ref{mmm}), so that we identify  $N=G\times
\mathfrak g^*\times M$.  According to  Formula \eqref{mmm}, if $(g,\xi,m)\in N$ we have:

\begin{equation}\label{mmm2} {\tilde \mu}_G(g,\xi,m)=- g\xi:=-g\xi,\quad {\tilde \mu}_L
(g,\xi,m)= -\xi |_{\mathfrak l}+\mu_L(m).
\end{equation}

We denote by $N^0$ the  zero fiber  of the moment map
$\tilde \mu_{G\times L}$  for    $G\times L$,  by $M^0$ the zero fiber of
the moment map $\mu_L$ on $M$  for   $ L.$  \begin{lemma}\label{ennezero}
We have $N^0=   G\times M^0$.
\end{lemma}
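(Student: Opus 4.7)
The plan is to read off the two vanishing conditions directly from the formulas \eqref{mmm2} for $\tilde{\mu}_G$ and $\tilde{\mu}_L$, and observe that they decouple.

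First I would examine the $G$-component of the moment map. A point $(g,\xi,m) \in N = G \times \mathfrak{g}^* \times M$ lies in $N^0$ iff both $\tilde{\mu}_G(g,\xi,m) = 0$ and $\tilde{\mu}_L(g,\xi,m) = 0$. From \eqref{mmm2}, the first equation reads $\mathrm{Ad}_g^*(\xi) = 0$. Since $\mathrm{Ad}_g^*$ is a linear automorphism of $\mathfrak{g}^*$ for every $g \in G$, this forces $\xi = 0$, independently of $g \in G$ and $m \in M$.

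Second, I would plug $\xi = 0$ into the second equation: $\tilde{\mu}_L(g,0,m) = -0|_{\mathfrak{l}} + \mu_L(m) = \mu_L(m)$, and this vanishes precisely when $m \in M^0$.

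Putting these two observations together, $N^0 = \{(g,0,m) : g \in G,\ m \in M^0\}$, which under the natural identification of $G \times \{0\} \times M^0$ with $G \times M^0$ gives the claimed equality. There is no real obstacle here — the content is just that the left trivialization makes $\tilde{\mu}_G$ depend only on $\xi$ (not on $g$ or $m$) so that the two equations separate cleanly.
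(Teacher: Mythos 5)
Your proof is correct and follows exactly the same route as the paper's: read off from \eqref{mmm2} that $\tilde\mu_G=0$ forces $\xi=0$, then observe that on this locus $\tilde\mu_L$ reduces to $\mu_L(m)$. The only difference is cosmetic — you spell out that $\mathrm{Ad}_g^*$ is a linear automorphism, a detail the paper leaves implicit.
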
\begin{proof}
From Formula \eqref{mmm2} the  set of points of $N$ where ${\tilde \mu}_G=0$ is $G\times M$, and on these points we have  ${\tilde \mu}_L(g,m)={  \mu}_L(m).$
\end{proof}

\begin{lemma}\label{zepp}\begin{enumerate}
\item  If we take the zero fiber of
${\tilde \mu}_L$, we obtain the manifold
\begin{equation}\label{eqP}
P:={\tilde \mu}_L^{-1}(0)=\{(g,\xi,m);
g\in G, \xi\in \mathfrak \g^*,m\in M; \xi|_{\mathfrak
l}=\mu_L(m)\}.
\end{equation}
\item  $0$ is a regular value for  the moment map ${\tilde \mu}_L$.
\end{enumerate}

\end{lemma}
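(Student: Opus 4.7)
The plan is to read off both statements directly from the explicit formula for $\tilde\mu_L$ given by \eqref{mmm2}, namely
$$\tilde\mu_L(g,\xi,m) = -\xi|_{\mathfrak l} + \mu_L(m),$$
in the left trivialization $N = G \times \mathfrak g^* \times M$.

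For part (i), I would simply set $\tilde\mu_L(g,\xi,m) = 0$. This gives $\xi|_{\mathfrak l} = \mu_L(m)$, which is exactly the defining condition of $P$ in \eqref{eqP}. There is no $g$-constraint (because the $L$-action on the $G$-factor is by right translation, which is free and hence its moment map contribution along the left trivialization is the constant $-\xi|_{\mathfrak l}$), so the $G$-factor sweeps out freely. Thus $\tilde\mu_L^{-1}(0) = P$.

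For part (ii), I need to show that the differential $d\tilde\mu_L$ is surjective onto $\mathfrak l^*$ at every point of $P$. The cleanest way is to fix $g$ and $m$ and vary $\xi$ alone along a curve $\xi + t\eta$ for $\eta\in\mathfrak g^*$. Then
$$\frac{d}{dt}\bigg|_{t=0} \tilde\mu_L(g, \xi + t\eta, m) = -\eta|_{\mathfrak l}.$$
Since the restriction map $\mathfrak g^* \to \mathfrak l^*$ is surjective (it is the transpose of the inclusion $\mathfrak l \hookrightarrow \mathfrak g$), as $\eta$ varies over $\mathfrak g^*$ this derivative covers all of $\mathfrak l^*$. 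Therefore $d\tilde\mu_L$ is surjective everywhere on $N$, and in particular $0$ is a regular value.

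There is no real obstacle here: both assertions are immediate consequences of \eqref{mmm2} and the surjectivity of $\mathfrak g^*\to\mathfrak l^*$. The only subtlety worth flagging is that the formula \eqref{mmm2} is stated in the left trivialization, so one should note explicitly at the start of the proof that we are working in that chart; both conclusions are intrinsic and do not depend on the trivialization chosen.
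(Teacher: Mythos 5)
Your proof is correct and follows essentially the same route as the paper: the paper's proof is the two-line observation that both claims are immediate from Formula \eqref{mmm2}, and you simply spell out the short computation that the paper leaves implicit (reading off the zero fiber and differentiating in $\xi$ to see surjectivity of $d\tilde\mu_L$). Nothing is missing and nothing is done differently in substance.
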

\begin{proof}
The first statement is immediate from Formula \eqref{mmm2}. As for the second, fix $g,m$,  the map $\tau:\mathfrak g^*\to\mathfrak l^*$ given by $\tau:\xi\mapsto {\tilde \mu}_L
(g,\xi,m)= -\xi |_{\mathfrak l}+\mu_L(m)$ has clearly surjective differential  for all $\xi$  hence the claim. \end{proof}

We are thus in the situation of Subsection \ref{sub:fre}.
The manifold $N$ is a $G\times  L$  manifold, $L$ acts freely on $N$ and $0$ is a regular value of the moment map  $\tilde \mu_L$  for $L$.
Consider the manifold $\mathcal M=P/L$.
Applying Proposition \ref{prop:sigmabar}, we see
 \begin{lemma}
The quotient $\mathcal M=P/L$  is a $G$-manifold.
The action form  on $N$  restricted to $P$ descends to  $\mathcal M$.

The induced   moment map $\mu_G:P/L\to
\g^*$ is obtained by quotient  from the moment map ${\tilde \mu}_G:(g,\xi,m)\to g\xi$ on $P$.
\end{lemma}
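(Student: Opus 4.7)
The plan is to combine Proposition \ref{prop:sigmabar} with the verifications of Lemma \ref{zepp}, since the ambient hypotheses of Subsection \ref{sub:fre} have essentially already been checked. First I would observe that $P$ is a smooth closed $L$-stable submanifold of $N$ on which $L$ acts freely: $L$-stability of $P=\tilde\mu_L^{-1}(0)$ follows from the $L$-equivariance of $\tilde\mu_L$; smoothness is Lemma \ref{zepp}(ii); freeness is inherited from the freeness of the right $L$-action on the $G$-factor of $N=T^*G\times M$. Consequently $\mathcal{M}=P/L$ carries a natural smooth structure. Since the left $G$-action on $T^*G$ commutes with the right $L$-action (and acts trivially on $M$), the subgroup $G$ preserves $P$, and its action descends to $\mathcal{M}$, endowing the quotient with its $G$-manifold structure.

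Next I would apply Proposition \ref{prop:sigmabar} to the $G\times L$-manifold $N$ with action form $\psi=p_1^*\omega+p_2^*\sigma$. All of its hypotheses are in force by the preceding step and Lemma \ref{zepp}. The proposition then asserts that $\bar\psi:=\psi|_P$ satisfies $\iota_x\bar\psi=0$ for every $x\in\l$ and descends to a $G$-invariant action form $\sigma_{\mathcal{M}}$ on $\mathcal{M}$, with $\bar\psi=\pi^*\sigma_{\mathcal{M}}$, where $\pi:P\to \mathcal{M}$ is the quotient projection.

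Finally, I would identify the moment map associated with $\sigma_{\mathcal{M}}$. Writing $v_y$ for the vector field generated by $y\in\g$, the fact that $G$ preserves $P$ ensures that $v_y$ is tangent to $P$ and projects to the corresponding vector field on $\mathcal{M}$. Viewing $\mu_G(y)$ as the scalar-valued function $m\mapsto \mu_G(m)(y)$ on $\mathcal{M}$, and similarly for $\tilde\mu_G(y)$ on $N$, the defining formula \eqref{moma} of the moment map gives
$$\pi^*\bigl(\mu_G(y)\bigr)=-\iota_{y}(\pi^*\sigma_{\mathcal{M}})=-\iota_{y}\bar\psi=(-\iota_{y}\psi)\big|_P=\tilde\mu_G(y)\big|_P.$$
Hence $\mu_G\circ\pi=\tilde\mu_G|_P$, which is precisely the statement that $\mu_G$ is obtained by quotient from $\tilde\mu_G$. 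Combined with the explicit formula $\tilde\mu_G(g,\xi,m)=g\xi$ from Lemma \ref{ennezero}, this yields the concrete description of $\mu_G$ on $P/L$.

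The one point that warrants an explicit verification, and is the main (minor) obstacle, is the $L$-invariance of $\tilde\mu_G|_P$, which is implicit in the descent but worth recording. For $l\in L$ acting by $(g,\xi,m)\mapsto (gl^{-1},\mathrm{Ad}^*_l\xi,l\cdot m)$, one has $\tilde\mu_G(gl^{-1},\mathrm{Ad}^*_l\xi,l\cdot m)=\mathrm{Ad}^*_{gl^{-1}}\mathrm{Ad}^*_l\xi=\mathrm{Ad}^*_g\xi=\tilde\mu_G(g,\xi,m)$, so the quotient $\mu_G$ is well defined. Apart from this short calculation and the need to keep the two parses of the notation $\mu(y)$ (as $\g^*$-valued map, versus as a scalar-valued function on $M$) straight, the argument is essentially a bookkeeping assembly of the earlier results.
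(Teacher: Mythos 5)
Your proof is correct and follows essentially the same route as the paper, which dispatches this lemma by simply invoking Proposition \ref{prop:sigmabar} in the situation of Subsection \ref{sub:fre} (with the hypotheses on $P$ coming from Lemma \ref{zepp}). Your version merely makes explicit the bookkeeping that the paper leaves implicit: the freeness and smoothness checks, the descent of $\psi|_P$, the pullback identity $\mu_G\circ\pi=\tilde\mu_G|_P$, and the short $L$-invariance computation for $\tilde\mu_G|_P$.
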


 \begin{definition}
We will say that $\mathcal M$ is the {\em induced action
manifold}.
\end{definition}
By Lemma \ref{ennezero}, the closed set  $N^0$, the  zero fiber  of the moment map
$\tilde \mu_{G\times L}$, equals  $G\times M^0$ and it is contained in $P$. Since, by definition, on $P=\tilde \mu_{L}^{-1}(0)$  the moment map  $\tilde \mu_{  L}$ equals 0,   we have that on $P$  the moment map  $\tilde \mu_{G\times L}$ equals  $\tilde \mu_{G }$. Therefore we obtain the \begin{lemma}\label{zefi} Under the inclusions $N^0\subset  P,\ N^0/L\subset  P/L$,
the zero
fiber $\mathcal M^0_G\subset \mathcal M$ of the moment map $\mu_G$ is identified with $N^0/L=G\times_L
M^0$.
\end{lemma}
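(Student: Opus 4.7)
The plan is to unwind the definitions step by step, using the explicit description of $N$ via left trivialization given in Formula \eqref{mmm2} together with the two preceding lemmas \ref{ennezero} and \ref{zepp}.

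First, I would note the chain of inclusions. Since $\tilde\mu_{G\times L}=0$ on $N^0$ clearly implies $\tilde\mu_L=0$, we have $N^0\subset P$, and passing to $L$-quotients we obtain $N^0/L\subset P/L=\mathcal M$. So $N^0/L$ makes sense as a subset of $\mathcal M$.

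Second, I would establish the equality $\mathcal M^0_G=N^0/L$. A point of $\mathcal M$ is an $L$-orbit $[g,\xi,m]$ with $(g,\xi,m)\in P$, and by the construction of $\mu_G$ via quotient, $\mu_G([g,\xi,m])=\tilde\mu_G(g,\xi,m)=g\xi$. Since the coadjoint action is linear and invertible, $g\xi=0$ is equivalent to $\xi=0$. The $P$-constraint $\xi|_{\mathfrak l}=\mu_L(m)$ from \eqref{eqP} then forces $\mu_L(m)=0$, i.e.\ $m\in M^0$. Conversely, any triple $(g,0,m)$ with $m\in M^0$ trivially lies in $P$ and is sent to $0$ by $\tilde\mu_G$. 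By Lemma \ref{ennezero}, this set of triples is exactly $N^0$, so passing to the $L$-quotient yields $\mathcal M^0_G=N^0/L$.

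Third, I would identify $N^0/L$ with $G\times_L M^0$. Under the identification $N^0\cong G\times M^0$ (Lemma \ref{ennezero}), one checks that the $L$-action descends to the product action on $G\times M^0$: the key point is that $L$ acts on $\mathfrak g^*$ through the coadjoint representation (this is the action on $T^*G$ induced by right translation in left trivialization), which fixes the zero covector, so the $L$-action on $N$ preserves the slice $\xi=0$ and restricts there to right multiplication on $G$ combined with the original $L$-action on $M$. The quotient of $G\times M^0$ by this diagonal action is, by definition, the associated bundle $G\times_L M^0$.

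The one genuinely delicate point is the verification in the third step: one must track how the $L$-action on $T^*G$ (right translation) transforms the $\mathfrak g^*$-factor in left trivialization, in order to confirm that $\xi=0$ is $L$-fixed and that what remains on $G$ is ordinary right multiplication. Once this is in hand, everything else is a direct substitution into the formulas \eqref{mmm2} and \eqref{eqP}.
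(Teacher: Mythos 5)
Your proof is correct and follows essentially the same route as the paper: identify $N^0$ inside $P$ via Lemma \ref{ennezero}, observe that on $P$ the only remaining constraint is $\tilde\mu_G=0$, and then pass to the $L$-quotient. The paper phrases the middle step more compactly (on $P$ one has $\tilde\mu_{G\times L}=\tilde\mu_G$, so $\mu_G^{-1}(0)=N^0/L$ is immediate), but your explicit unwinding of $g\xi=0\Rightarrow\xi=0\Rightarrow m\in M^0$ and your check that the $L$-action on $N^0$ is the diagonal action defining $G\times_L M^0$ are the same content spelled out in more detail.
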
 Denote by $p_1,p_2$ the two projections of  $ N^0=G\times  M^0$ on its factors.  Denote by  $\pi :G\times  M^0=N^0\to N^0/L= G\times_L M^0 $ the quotient map.

Thus we get isomorphisms
$$\begin{CD} H^*_{L,c}(M^0)@>p_2^*>> H^*_{G\times L,c}(N^0)@<\pi^*<<H^*_{G,c}(G\times_L M^0 )\end{CD}.$$

We set $j=\pi^{*-1}p_2^*$: \begin{equation} \label{lai}\begin{CD} j:
H^*_{L,c}(M^0)@>p_2^*>> H^*_{G\times L,c}(N^0)@>
(\pi^*)^{-1}>>H^*_{G,c}(\mathcal M^0_G).\end{CD}
\end{equation}

\begin{remark}\label{formulaj}
As in the usual case (see \cite{duf-kum-ver}, page 33), the isomorphism $j^{-1}$ can be described as follows. Let $\gamma(y)$, with $y\in \mathfrak g$, be  an equivariant form on $P/L=\mathcal M$ representing $[\gamma]\in H_{G,c}^*(\mathcal M_G^0)= H_{G,c}^*(G\times_L M^0)$. We restrict $\gamma$ to the $L$ invariant  submanifold $M$ embedded in $\mathcal M$ by $m\mapsto (e,\mu_L(m),m)$ and obtain an $L$-equivariant form on $M$. We can represent $j^{-1}[\gamma]$  by $\gamma(x)|_M$ with $x\in \mathfrak l$.

\end{remark}

\subsubsection{The induction formula for {\em infdex}}\label{inductioninfdex}
 Given a class $[\alpha]\in H^*_{L,c}(M^0)$, our goal is to compare
 $ \infdex_L^{\mu_L}( [\alpha])$ and
 $\infdex_G^{\mu_G}(j( [\alpha]))$,
 the first being a distribution on $\mathfrak l^*$
 and the second one on $\mathfrak g^*.$
  We shall show that $ \infdex_G^{\mu_G}(j( [\alpha]))$
   {\em is induced} by
  $\infdex_L^{\mu_L}( [\alpha])$, according to Definition \ref{inductiond}.
  \begin{theorem}\label{laind} Let $[\alpha]\in H^*_{L,c}(M^0)$, then
\begin{equation}\label{indi}
\infdex_{G}^{\mu_G} (j [\alpha])=i^{\dim G-\dim L}{\rm Ind}^{ \mathfrak g^* }_{
\mathfrak l^*}(\infdex^{\mu_L}_{L}([\alpha])).
\end{equation}
\end{theorem}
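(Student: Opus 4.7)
The plan is to realize $\mathcal{M}$ as the symplectic reduction of $N=T^*G\times M$ by the free $L$-action, and then combine the product, restriction, and free-action functorial properties of $\infdex$ already proven.

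First, view $T^*G\times M$ as a product of action manifolds: $T^*G$ is a $(G\times L)$-manifold (left $G$, right $L$) and $M$ is an $L$-manifold. By Proposition \ref{product},
$$\infdex_{(G\times L)\times L}(1\otimes\alpha)=\infdex_{G\times L}^{\mu_{T^*G}}(1)\boxtimes\infdex_L^{\mu_L}([\alpha])$$
as a distribution on $\mathfrak g^*\oplus\mathfrak l^*\oplus\mathfrak l^*$. Restricting along $G\times\Delta L\subset(G\times L)\times L$ (Theorem \ref{restriction}), the distribution $\infdex_{G\times L}^{\tilde\mu}(p_2^*[\alpha])$ is obtained by push-forward via $(\xi,\eta_1,\eta_2)\mapsto(\xi,\eta_1+\eta_2)$, i.e.\ a partial convolution in the $\mathfrak l^*$-direction. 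To make this concrete I first evaluate $\infdex_{G\times L}^{\mu_{T^*G}}(1)$ on $T^*G$ by applying the restriction theorem to Theorem \ref{GG} (the zero sets $\mu_{G\times G}^{-1}(0)$ and $\mu_{G\times L}^{-1}(0)$ both equal $G\times\{0\}$), giving
$$\langle\infdex_{G\times L}^{\mu_{T^*G}}(1),F_1F_2'\rangle=i^{\dim G}\int_G\int_{\mathfrak g^*}F_1(g\zeta)\,F_2'(-\zeta|_{\mathfrak l})\,dg\,d\zeta.$$

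Substituting and writing $V:=\infdex_L^{\mu_L}([\alpha])$, for test functions $f_2$ on $\mathfrak g^*$ and $f_1$ on $\mathfrak l^*$ I obtain
$$\langle\infdex_{G\times L}^{\tilde\mu}(p_2^*\alpha),f_2f_1\rangle=i^{\dim G}\int_{\mathfrak l^*}dV(\eta_2)\int_G\int_{\mathfrak g^*}f_2(g\zeta)\,f_1(-\zeta|_{\mathfrak l}+\eta_2)\,dg\,d\zeta.$$
Now I invoke Corollary \ref{cor:free}, applied to the free $L$-action on $N$ with $0$ a regular value of $\tilde\mu_L$ (Lemma \ref{zepp}): the pairing above, viewed as a distribution in $f_1$, is a smooth density $D(u)\,du$ on $\mathfrak l^*$ with
$$D(0)=i^{\dim L}\,\mathrm{vol}(L,dl)\,\langle\infdex_G^{\mu_G}(j[\alpha]),f_2\rangle.$$
Reading off $D(0)$ from the explicit formula (setting $u=-\zeta|_{\mathfrak l}+\eta_2=0$, i.e.\ $\eta_2=\zeta|_{\mathfrak l}$) and then changing variables $\zeta\mapsto g^{-1}\xi$ and $g\mapsto g^{-1}$ (both Haar-invariant) produces
$$D(0)=i^{\dim G}\int_{\mathfrak g^*}f_2(\xi)\int_G V((g\xi)|_{\mathfrak l})\,dg\,d\xi=i^{\dim G}\,\mathrm{vol}(L,dl)\,\langle\mathrm{Ind}_{\mathfrak l^*}^{\mathfrak g^*}V,f_2\rangle$$
by Definition \ref{inductiond}. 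Equating the two expressions for $D(0)$ and cancelling the common factor $i^{\dim L}\mathrm{vol}(L,dl)$ yields precisely Formula \eqref{indi}.

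The main obstacle is careful bookkeeping: verifying at each step that the hypotheses of Theorem \ref{restriction} (coincidence of the relevant zero sets and compact support along fibers) are in force, and reconciling all normalizations (Haar measures, orientations, powers of $i$) so that the factor $i^{\dim G-\dim L}$ emerges cleanly. A subsidiary verification is that $p_2^*\alpha$ on $N$ really represents the pullback $\pi^*j[\alpha]$, which holds by the definition $j=(\pi^*)^{-1}p_2^*$ in \eqref{lai} (see Remark \ref{formulaj}).
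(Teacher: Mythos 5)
Your proposal is correct and takes essentially the same route as the paper: both factor $N=T^*G\times M$, invoke the product (Proposition \ref{product}) and restriction (Theorem \ref{restriction}) properties to express $\infdex_{G\times L}^{\tilde\mu}$ of the pulled-back class as a push-forward of $\infdex_{G\times G}^\mu(1)\otimes\infdex_L^{\mu_L}([\alpha])$, and then apply Corollary \ref{cor:free} and evaluate the resulting smooth density at $0$ to extract ${\rm Ind}_{\mathfrak l^*}^{\mathfrak g^*}$. The only cosmetic difference is that you restrict the $T^*G$-factor from $G\times G$ to $G\times L$ before forming the external product, whereas the paper forms the product over $G\times G\times L$ and then restricts once along $\{((g,l),l)\}$; these two orderings of the same functorial steps yield identical computations.
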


\begin{proof}
Consider   the form $\gamma:=1\times \alpha$ on $G\times M$, where
$\alpha$ is a representative of $[\alpha]$.
   By   definition  $j={\pi^*}^{-1}p_2^*$ and we see that
        $[\gamma]=p_2^*[\alpha]=\pi^*j[\alpha]$.

Consider the $G\times L$ manifold  $N= T^*G\times M$. To this manifold we can apply  Corollary \ref{cor:free}. Let $f_1$ be
a  variable test function on $\mathfrak l^*$ and $f_2$ be a given test function
on $\mathfrak g^*.$  The distribution
$f_1\to \langle \infdex_{G\times L}^{\tilde \mu_{G\times
L}}([\gamma]),f_2 f_1\rangle$ is given by a smooth density $D(\zeta) d\zeta$ on
$\mathfrak l^*$, and the value  $D(0)$ equals
$i^{\dim L}{\rm vol}(L,dl)\langle \infdex_{G}^{\mu_G} (j [\alpha]),f_2\rangle .$

\smallskip

Let us compute   $\langle \infdex_{G\times
L}^{\tilde \mu_{G\times L}}([\gamma]),f_2 f_1\rangle $ using the fact
that $\gamma$ is the external product $1\times \alpha$. We
consider the product manifold $T^*G \times M$ provided with the
action of $G_1 \times G_2$ where $G_1= G\times G$ acts by left and right action on $T^*G$ and $G_2=L$ acts on $M$.

Consider next the embedding of
$G\times L$ as   subgroup of
$G \times G \times L$ by  $\{((g,l),l),g\in G,l\in L\}$ .   Denote by
$$s:\mathfrak g\oplus \mathfrak l\to \mathfrak g\oplus \mathfrak g\oplus \mathfrak l,\ (a,b)\mapsto (a,b,b)$$  the inclusion of Lie algebras.  Denote by $p:\mathfrak g^*\to \mathfrak l^*$     the  restriction map.  Then for
 $\zeta\in\mathfrak l^* $ and $(\xi_1,\xi_2)\in
\mathfrak g^*\oplus \mathfrak g^*$ the restriction map $R$ associated to   the inclusion $s$ is:
  $$R:=s^*:\mathfrak g^*\oplus \mathfrak g^* \oplus \mathfrak l^*  \to \mathfrak g^*\oplus \mathfrak
l^*,\ (\xi_1,\xi_2,\zeta)\mapsto (\xi_1,\zeta+p(\xi_2)).$$ Remark that our given action form on $N$  is $G \times G \times L$
invariant and that $N^0=G\times M^0$ is also the set of zeroes of the moment map  $\mu$ for  the group $G \times G \times L$.

 In order to compute $\infdex_{G\times
L}^{\tilde \mu_{G\times L}}([\gamma])$ we may thus apply first the external product property
(Proposition \ref{product}) and then the restriction property
(Proposition \ref{restriction})  obtaining:$$\infdex_{G\times L}^{\tilde \mu_{G\times L}}(1\times \alpha)=
R_*(\infdex_{G\times G}^{\mu}(1)\otimes
\infdex_L^{\mu_L}([\alpha])).$$

We now make this formula more explicit.
  Let $f_1$ be a
test function on $\mathfrak l^*$ and  $f_2$   a test function on
$\mathfrak g^*$.  Using Formula  \eqref{plos}$$\langle R_*(\infdex_{G\times G}^{\mu}(1)\otimes
\infdex_L^{\mu_L}([\alpha])),f_1f_2\rangle$$$$
=\lim_{T\to \infty}\langle \infdex_{G\times G}^{\mu}(1)\otimes
\infdex_L^{\mu_L}([\alpha]),R^*(f_1 f_2)\chi_T\rangle$$ The function $R^*(f_1 f_2)(\xi_1,\xi_2,\zeta)$ is the
function $f_1(\zeta+p(\xi_2)) f_2(\xi_1).$
Using the formula for  $\infdex_{G\times G}^{\mu}(1)$ for $T^*G$
of Proposition \ref{GG}, we obtain
$$\lim_{T\to \infty} \langle \infdex_{G\times G}^{\mu}(1)\otimes
\infdex_L^{\mu_L}([\alpha]),f_1(\zeta+p(\xi_2)) f_2(\xi_1)\chi_T\rangle$$$$=i^{\dim G}\langle \infdex_L^{\mu_L}([\alpha]), q(f_1,f_2)\rangle$$
with ($A$ is defined in \eqref{Aver}):
$$q(f_1,f_2)(\zeta)=\int_{ \mathfrak g^*}\int_G  f_1(\zeta+p(\xi))
f_2(-g\xi) dg d\xi=\int_{\mathfrak g^*} f_1(\zeta+p(\xi)) Af_2(-\xi)
d\xi.$$ Integrating first on the fiber $p:\g^*\to \mathfrak l^*$,
then on $\mathfrak l^*$, we see that $$q(f_1,f_2)(\zeta)=f_1*(p_*(Af_2))(\zeta)$$
where $u*v$ is the convolution product of test functions on
$\mathfrak l^*$.

 Then we obtain
$$\langle \infdex_{G\times L}^{\tilde \mu_{G\times L}}([\gamma]),f_1 f_2\rangle =i^{\dim G}
 \langle \infdex_{L}^{\mu_L}([\alpha]),f_1*(p_*(Af_2))(\zeta)\rangle .$$
This is a smooth density with respect to $\zeta\in \mathfrak l^*$,
and if $f_1$  tends to $\delta_0(\zeta)$, then
$\langle\infdex^{\tilde \mu_{G\times L}}_{G\times L}([\gamma]),f_1 f_2\rangle$ tends
to $$i^{\dim G}\langle \infdex_{ L}^{\mu_L}([\alpha]),p_*(A f_2)(\zeta)\rangle=i^{\dim G}
{\rm vol}(L,dl)\langle {\rm Ind}_{\mathfrak l^*}^{\mathfrak g^*} \infdex_{
L}([\alpha]), f_2(\zeta)\rangle.$$ We thus obtain the wanted formula \eqref{indi}.
\end{proof}

 \subsection{Maximal tori}\label{maxtori}

As usual, let $M$ be a $G$-manifold with a $G$-invariant action form $\sigma$.
Let $T\subset G$ be a maximal
torus.
We show next how to reduce the calculation of the infinitesimal index map for $G$ to the calculation of the infinitesimal index map for $T$.
Our construction is very similar to the construction of the map $K_G(T^*_G N)\to K_T(T^*_TN)$ at the level of $K$-theory given in \cite{At}.

Associated to $\sigma$,  we have the  moment maps $\nu_G:M\to \mathfrak g^*$ and
$\nu_T=p\circ \nu_G:M\to\mathfrak t^*$, with $p:\mathfrak g^*\to
\mathfrak t^*$   the restriction map.

Consider $M$ as a $T$-manifold, and consider $N=T^*G\times M$,
provided, as in Subsection
\ref{induction} (here the group $L$ is $T$), with action form $\psi=p_1^*\omega+p_2^*\sigma$ and the action of $G\times T$: the group $G$ acts on $T^*G$ by
  left action, and trivially on $M$,
 the group  $T$ acts on $G$ by right action and acts on $M$.  We denote by $\tilde \mu_{G\times T}=\tilde \mu_G\oplus \tilde \mu_T$ the corresponding moment map.

Recall, by Formula \eqref{eqP}, that
  $$P=\tilde\mu_T^{-1}(0)=\{(g,\xi,m); g\in G, \xi\in \mathfrak \g^*,m\in M;
\xi|_{\mathfrak t}=\nu_T(m)\}$$ is a $G\times T$ manifold
on which $G$ acts by  $g_0\cdot (g,\xi,m)=(g_0g,\xi,m)$, for $g_0\in G$, $(g,\xi,m)\in P$ and $T$ acts by $t\cdot (g,\xi,m)=(g t^{-1},t\xi,tm)$.

We then consider  $\mathcal M:= P/T$,
with moment map $\mu_G([g,\xi,m])=g\xi $  \eqref{mmm2}.
Recall that
$\mathcal M^0_G$
 is isomorphic to $G\times_T M_T^0$ embedded in $P/T$ by $[g,0,m]$.

\bigskip

For $[\alpha]\in H^*_{G,c}(M^0_G)$, we want to  produce an element
 $r([\alpha]) \in  H^*_{G,c}(\mathcal  M_G^0)=H^*_{G,c}(G\times_T M_T^0)$
which has the same infinitesimal index as $[\alpha]$.

\bigskip

\begin{proposition}
We can embed $G\times M$ in $P$ by the map
$$\gamma(g,m)=(g,\nu_G(g^{-1}m),g^{-1}m).$$
The map $\gamma$ is $G\times T$  equivariant, where $G$ acts on $G\times M$ by diagonal  action (left  on $G$) while $T$ acts by the right action on $G$ and not on $M$.\end{proposition}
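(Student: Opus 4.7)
The plan is to verify in succession the three claims packaged in the statement: that $\gamma(g,m)\in P$, that $\gamma$ is a smooth embedding of $G\times M$ into $P$, and that $\gamma$ intertwines the two specified actions.

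First I would check that the image lies in $P$. By the explicit description \eqref{eqP}, membership in $P$ is the single condition $\xi|_{\mathfrak t}=\nu_T(m')$, and with $(\xi,m')=(\nu_G(g^{-1}m),\,g^{-1}m)$ this reduces tautologically to the defining relation $\nu_T=p\circ\nu_G$. This step carries no content beyond the definition of $\nu_T$.

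Next, to see that $\gamma$ is an embedding I would exhibit a smooth retraction $\pi:P\to G\times M$, namely $\pi(g,\xi,m'):=(g,\,g\cdot m')$. A direct computation gives $\pi\circ\gamma=\mathrm{id}_{G\times M}$, so $\gamma$ is injective and an immersion; admitting a continuous left inverse, it is moreover a topological embedding. A dimension count $\dim P-\dim(G\times M)=\dim G-\dim T$ confirms that the image has the expected codimension as a submanifold of $P$.

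Finally I would verify equivariance under each factor of $G\times T$ separately. The $G$-case is formal: substituting $(g_0 g,g_0 m)$ into $\gamma$ causes the two occurrences of $g_0$ in the second and third coordinates to cancel, yielding $(g_0 g,\,\nu_G(g^{-1}m),\,g^{-1}m)$ on both sides. The $T$-case is the one step carrying real content. Writing
$$\gamma(gt^{-1},m)=\bigl(gt^{-1},\,\nu_G(tg^{-1}m),\,tg^{-1}m\bigr)\quad\text{and}\quad t\cdot\gamma(g,m)=\bigl(gt^{-1},\,t\cdot\nu_G(g^{-1}m),\,tg^{-1}m\bigr),$$
the outer coordinates match on the nose, and the middle coordinates agree precisely because $\nu_G$ is $G$-equivariant for the coadjoint action, hence \emph{a fortiori} $T$-equivariant. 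This single identity $\nu_G(ty)=t\cdot\nu_G(y)$ is the only place where a non-formal property of the moment map enters, and so constitutes the main (albeit mild) obstacle in the proof.
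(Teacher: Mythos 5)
Your proof is correct and follows essentially the same approach as the paper: the containment check reduces to $\nu_T=p\circ\nu_G$, the $G$-equivariance is a cancellation of $g_0$, and the $T$-equivariance hinges on the $T$-equivariance of $\nu_G$, which is exactly the computation $\gamma(gt^{-1},m)=(gt^{-1},t\nu_G(g^{-1}m),tg^{-1}m)$ appearing in the paper. The one thing you do beyond the paper's proof is to justify the word ``embed'': the paper's proof only verifies the image lies in $P$ and the equivariance, taking the embedding property for granted, whereas you supply the retraction $\pi(g,\xi,m')=(g,gm')$ with $\pi\circ\gamma=\mathrm{id}$. This is a genuine (if small) addition; note that the paper effectively recovers it two statements later in Proposition \ref{vebP}, where the induced map $q:\mathcal M\to G/T\times M$ with $q\gamma=\mathrm{id}$ is identified as a vector-bundle projection, so your retraction is essentially a lift of $q$ to $P$ before passing to the $T$-quotient.
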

\begin{proof}
First $(g,\nu_G(g^{-1}m),g^{-1}m)\in P$  since $\nu_G(g^{-1}m)|_{\mathfrak t^*}= \nu_T(g^{-1}m).$   Next $\gamma(hg,hm)=  (hg,\nu_G(g^{-1}m),g^{-1}m)$ and $$\gamma( gt^{-1}, m)=  ( gt^{-1},\nu_G(t g^{-1}m),t g^{-1}m)=  ( gt^{-1},t \nu_G(  g^{-1}m),t g^{-1}m).$$
\end{proof}
\begin{corollary}\label{gthr} The map $\gamma$ induces, modulo the action of $T$, an embedding still denoted by $\gamma:G/T\times M\hookrightarrow \mathcal M=P/T.$
Thus the manifold $G/T\times M$, with diagonal $G$-action is identified to a $G$-invariant submanifold of $\mathcal M $.
\end{corollary}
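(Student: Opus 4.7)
My plan is to exhibit a smooth $T$-equivariant retraction of $\gamma$, which will automatically make the induced quotient map an embedding, and then check $G$-equivariance of the induced map directly.

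First I would verify that $T$ acts freely on both $G\times M$ and $P$, so that the quotients $G/T\times M$ and $\mathcal M=P/T$ are manifolds. On $G\times M$, $T$ acts only through the right action on $G$, and this is free on $G$. On $P$, the formula $t\cdot(g,\xi,m)=(gt^{-1},t\xi,tm)$ shows freeness as well, since the first coordinate already witnesses freeness. Because $\gamma$ is $T$-equivariant by the Proposition, it descends to a smooth map
\[
\bar\gamma:G/T\times M\longrightarrow \mathcal M,\qquad ([g],m)\longmapsto [g,\nu_G(g^{-1}m),g^{-1}m].
\]

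Next I would define the smooth retraction $\rho:P\to G\times M$ by $\rho(g,\xi,m')=(g,gm')$. A direct calculation using $t\cdot(g,\xi,m')=(gt^{-1},t\xi,tm')$ on $P$ and the right $T$-action on $G\times M$ gives $\rho(gt^{-1},t\xi,tm')=(gt^{-1},g m')$, which agrees with $t\cdot(g,gm')$, so $\rho$ is $T$-equivariant and descends to $\bar\rho:\mathcal M\to G/T\times M$. Moreover $\rho\circ\gamma(g,m)=(g,g\cdot g^{-1}m)=(g,m)$, whence $\bar\rho\circ\bar\gamma=\mathrm{id}_{G/T\times M}$. Therefore $\bar\gamma$ is an injective immersion onto a closed submanifold with smooth left inverse, i.e.\ an embedding.

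Finally, I would check $G$-equivariance. Under the diagonal action on $G/T\times M$ and the left action $h\cdot[g,\xi,m']=[hg,\xi,m']$ on $\mathcal M$ inherited from Subsection \ref{induction}, we have
\[
\bar\gamma(h\cdot([g],m))=\bar\gamma([hg],hm)=[hg,\nu_G((hg)^{-1}hm),(hg)^{-1}hm]=[hg,\nu_G(g^{-1}m),g^{-1}m],
\]
which equals $h\cdot\bar\gamma([g],m)$. So $\bar\gamma$ identifies $G/T\times M$ with a $G$-invariant submanifold of $\mathcal M$, yielding the corollary. The only mildly subtle step is producing the retraction $\rho$ and verifying its $T$-equivariance; everything else is formal.
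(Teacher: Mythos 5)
Your proof is correct. The paper itself gives no explicit argument for the Corollary: it treats it as an immediate consequence of the preceding Proposition (that $\gamma:G\times M\to P$ is a $G\times T$-equivariant embedding), invoking the general fact that a $T$-equivariant embedding between manifolds with free $T$-action descends to an embedding of the quotients. It then states the retraction separately in Proposition \ref{vebP}.

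Your route is different in emphasis though not in substance: rather than citing the general descent principle, you construct the $T$-equivariant left inverse $\rho(g,\xi,m')=(g,gm')$ directly and deduce that $\bar\gamma$ is an embedding because it admits a smooth retraction. Note that your $\rho$ descends exactly to the projection $q([g,\xi,m'])=(gT,gm')$ of Proposition \ref{vebP}, so you are in effect proving the identity $q\gamma=\mathrm{id}$ as a means of establishing the Corollary, whereas the paper presents this as further information afterwards. The two approaches buy essentially the same thing; yours is more self-contained, the paper's more economical given the next proposition. One small overreach: you assert the image is a \emph{closed} submanifold, which does not follow merely from the existence of a smooth left inverse. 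It is true here (as a section of the vector bundle $q$, cf.\ Proposition \ref{vebP}), but it is neither proved by your argument nor needed for the statement.
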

 In fact more is true. Let $q:\mathcal M\to  G/T\times M $ be the projection given by $q(g,\xi,m)=(gT,gm)$.
 Let $\g^*=\mathfrak t^*\oplus \mathfrak t^{\perp}$ be the  canonical
$T$-invariant decomposition of $\g^*$.
 Then  we claim that
   \begin{proposition}\label{vebP} $q\gamma$ is the identity and
$q:\mathcal M\to  G/T\times M $ is a vector bundle with fiber $\mathfrak t^\perp$.
\end{proposition}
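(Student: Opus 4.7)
The plan is to verify the two assertions in order. First, $q\gamma = \mathrm{id}$ follows by direct computation:
$$q\gamma(g,m) = q(g,\nu_G(g^{-1}m),g^{-1}m) = (gT,\, g\cdot g^{-1}m) = (gT,m).$$
In particular $q$ is surjective and $\gamma$ is a global smooth section of $q$.

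For the bundle structure, I first describe the fibers of $q$. Fix $(xT,n) \in G/T\times M$ and any lift $g\in G$ with $gT = xT$. A class $[g',\xi,m]\in \mathcal M$ lies over $(xT,n)$ exactly when $g'T = xT$ and $g'm = n$. Using the free $T$-action $t\cdot (g',\xi,m) = (g't^{-1}, t\xi, tm)$, one can uniquely normalize the representative so that $g' = g$; then $m = g^{-1}n$ is forced, and the defining relation $\xi|_{\mathfrak t} = \nu_T(g^{-1}n)$ of $P$ describes an affine subspace of $\mathfrak g^*$ modeled on $\mathfrak t^\perp$. The section $\gamma$ provides the distinguished basepoint $\xi = \nu_G(g^{-1}n)$ (which lies in the fiber because $\nu_G(g^{-1}n)|_{\mathfrak t} = \nu_T(g^{-1}n)$), so $\xi \mapsto \xi - \nu_G(g^{-1}n)$ identifies the fiber canonically with $\mathfrak t^\perp$.

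To produce local trivializations, take an open cover $\{U_\alpha\}$ of $G/T$ on which the principal $T$-bundle $G\to G/T$ admits smooth sections $s_\alpha:U_\alpha\to G$, and define
$$\Phi_\alpha: U_\alpha\times M\times \mathfrak t^\perp \longrightarrow q^{-1}(U_\alpha\times M),\qquad (x,n,\eta)\longmapsto [s_\alpha(x),\, \nu_G(s_\alpha(x)^{-1}n)+\eta,\, s_\alpha(x)^{-1}n].$$
Smoothness and bijectivity follow from the fiber analysis above. On an overlap $U_\alpha\cap U_\beta$, write $s_\beta(x) = s_\alpha(x)\tau(x)$ for a smooth map $\tau:U_\alpha\cap U_\beta\to T$; applying the $T$-action by $\tau(x)$ to transport the $G$-component from $s_\beta(x)$ to $s_\alpha(x)$ and using the $G$-equivariance $\tau(x)\cdot \nu_G(p) = \nu_G(\tau(x)p)$ of the moment map, the transition function comes out to $\eta\mapsto \tau(x)\eta$, which is the coadjoint action of $T$ on the $T$-invariant subspace $\mathfrak t^\perp$ and is therefore linear in $\eta$ and smooth in $x$. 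This exhibits $q$ as a vector bundle with fiber $\mathfrak t^\perp$ and zero section $\gamma$.

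There is no serious obstacle; the argument reduces to bookkeeping with the $T$-action on $P$ and the $G$-equivariance of $\nu_G$. The one point that genuinely needs invoking is the canonical $T$-invariance of the splitting $\mathfrak g^* = \mathfrak t^*\oplus \mathfrak t^\perp$, which is what guarantees that the transition maps are linear endomorphisms of $\mathfrak t^\perp$ (rather than merely affine).
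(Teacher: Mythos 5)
Your proof is correct, but it takes a more local route than the paper's.  The paper proves the second assertion with a single global identification: the map $P\to G\times M\times\mathfrak t^\perp$, $(g,\xi,m)\mapsto(g,m,\xi-\nu_T(m))$, is a $T$-equivariant isomorphism of $P$ with the trivial $\mathfrak t^\perp$-bundle over $G\times M$ (here $\nu_T(m)\in\mathfrak t^*\subset\mathfrak g^*$), and one then quotients by the free $T$-action, noting that $(G\times M)/T\cong G/T\times M$ via $(g,m)\mapsto(gT,gm)$.  This avoids any choice of open cover: the vector bundle structure on $\mathcal M=P/T$ is inherited from a global $T$-equivariant trivialization upstairs.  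You instead pass to local sections $s_\alpha$ of $G\to G/T$ and verify by hand that the induced charts $\Phi_\alpha$ have linear transition maps $\eta\mapsto\tau(x)\eta$.  That is sound (your fiberwise normalization argument and the cocycle computation are both accurate, and you correctly identify the $T$-invariance of $\mathfrak t^\perp$ as the point that makes the transitions linear rather than merely affine), but the paper's version makes the same mechanism visible in one line and at the same time makes the $G\times T$-equivariance of the bundle structure manifest, which is what is actually needed later when pushing forward Thom classes.  It is also worth noting that your zero section is $\xi=\nu_G(g^{-1}n)$ (i.e.\ $\gamma$), whereas the paper normalizes at $\xi=\nu_T(m)$; these differ by a translation by $\nu_\perp$, which is exactly the shift that reappears in Theorem \ref{form:maxtor}.
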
\begin{proof}
The first claim comes from the definitions.  As for the second, we may   identify $P$ with $G\times M\times   \mathfrak t^\perp$   by the map
$$P\to G\times M\times   \mathfrak t^\perp,\quad (g,\xi,m)\mapsto   (g,m,\xi-\nu_T(m)). $$
\end{proof}
\begin{lemma}
 The restriction  of the moment map $\mu_G$ on $\mathcal M$ to  $G/T\times M$ is just $(gT,m)\mapsto \nu_G(m)$ with zeroes $G/T \times M_G^0$.
\end{lemma}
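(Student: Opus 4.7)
The plan is to compute the moment map directly using the explicit formula \eqref{mmm2} together with the $G$-equivariance of the original moment map $\nu_G$. Specifically, on $N = T^*G \times M$ (with left trivialization of $T^*G$) the moment map for the left $G$-action is $\tilde\mu_G(g,\xi,m) = g\xi$, and this descends to $\mu_G([g,\xi,m]) = g\xi$ on $\mathcal M = P/T$. So everything reduces to evaluating $\mu_G$ on representatives of the form $\gamma(g,m) = (g, \nu_G(g^{-1}m), g^{-1}m)$.

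First I would simply substitute: $\mu_G(\gamma(g,m)) = g \cdot \nu_G(g^{-1}m)$. Since $\nu_G : M \to \mathfrak g^*$ is $G$-equivariant with respect to the coadjoint action, we have $g \cdot \nu_G(g^{-1}m) = \nu_G(m)$. Hence under the identification of Corollary \ref{gthr}, the restriction of $\mu_G$ to $G/T \times M$ is the map $(gT, m) \mapsto \nu_G(m)$. The only thing to check is well-definedness on the coset $gT$: replacing $g$ by $gt$ changes the representative $\gamma(gt,m) = (gt, \nu_G(t^{-1}g^{-1}m), t^{-1}g^{-1}m)$ to one lying in the same $T$-orbit in $P$, and then $gt \cdot \nu_G(t^{-1}g^{-1}m) = g \cdot \nu_G(g^{-1}m) = \nu_G(m)$ again, which is of course independent of $t$.

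For the zero set, $\nu_G(m) = 0$ is equivalent to $m \in M_G^0$ by the very definition $M_G^0 := \nu_G^{-1}(0)$, so the zeros of the restricted moment map in $G/T \times M$ are exactly $G/T \times M_G^0$.

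There really is no serious obstacle here: once one has Corollary \ref{gthr} identifying $G/T \times M$ as a submanifold of $\mathcal M$ via $\gamma$, and Formula \eqref{mmm2} for the left-trivialized moment map on $T^*G$, the statement is a one-line consequence of the equivariance of $\nu_G$. The only minor subtlety is to be careful that the map written as $(gT,m) \mapsto \nu_G(m)$ is independent of the coset representative, which is automatic from the equivariance.
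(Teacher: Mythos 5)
Your argument is correct and follows essentially the same route as the paper: both substitute the explicit embedding $\gamma(g,m)=(g,\nu_G(g^{-1}m),g^{-1}m)$ into the formula $\mu_G([g,\xi,m])=g\xi$ from \eqref{mmm2} and conclude by the $G$-equivariance of $\nu_G$. The well-definedness check you add is a small but sensible elaboration the paper leaves implicit.
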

\begin{proof}
We have $\mu_G(g,\xi,m)=g\xi$  by the previous discussion.  An element $(g , m)$ corresponds to the triple $(g,\nu_G(g^{-1}m),g^{-1}m)$,  so the claim follows since $\nu_G$ is $G$--equivariant.
\end{proof}
We now apply the construction $\gamma_!$ of Subsection \ref{push} to the manifold
$G/T\times M$ embedded by $\gamma$
in $\mathcal M$.

Recall that $G/T$ is a even dimensional manifold.
Take  an equivariant form   $\beta$ on  $G/T$ with class
\begin{equation}
\label{beta}[\beta]=(-1)^{\frac{1}{2}\dim G/T}\,\frac{e(G/T)}{|W|}
\end{equation}
where $W$ is the Weyl group and $e(G/T)$ is the equivariant Euler
class. Notice that since $|W|$ equals the Euler characteristic of
$G/T$,  $\int_{G/T}[\beta]$ is equal to $(-1)^{\frac{1}{2}\dim G/T}.$

Thus, by Theorem \ref{theowit1}, the infinitesimal index of $[\beta]$ is just the $\delta$--function  on $\mathfrak g^*$.
 Let $[\alpha]\in H_{G,c}^*(M_G^0)$. We then  construct the element $[\beta\wedge \alpha]$ in the
compactly supported equivariant cohomology  $$[\beta\wedge \alpha]\in H_{G,c}^*((G/T\times
M)^0_G) =H_{G,c}^*(G/T\times M^0_G).$$
 \begin{lemma}\label{besae} The infinitesimal index of
$[\beta\wedge \alpha]$ is   equal to $(-1)^{\frac{1}{2}\dim G/T}$ times the infinitesimal index of
$[\alpha]$.
\end{lemma}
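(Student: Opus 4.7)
The plan is to identify the infinitesimal index of $[\beta\wedge\alpha]$ as a convolution, via Corollary \ref{preinf}, and then evaluate $\infdex_G^0([\beta])$ separately. Equip $G/T\times M$ with the action form $p_2^*\sigma$, i.e., the zero action form on the $G/T$ factor plus $\sigma$ on the $M$ factor. Under the diagonal $G$-action, the moment map is $(gT,m)\mapsto \nu_G(m)$ and its zero set is exactly $G/T\times M^0_G$, consistent with the cohomology group in which $[\beta\wedge\alpha]$ lives. Because the first action form is identically zero, the hypothesis $(M_1\times M_2)^0_\Delta=M_1\times M_2^0$ of Corollary \ref{preinf} holds automatically, and the corollary gives
\[
\infdex_G^{\nu_G}([\beta\wedge\alpha]) \;=\; \infdex_G^{0}([\beta])\,\ast\, \infdex_G^{\nu_G}([\alpha]).
\]

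Next I would compute $\infdex_G^{0}([\beta])$ by applying Theorem \ref{theowit1} to $G/T$ with zero action form (so $M^0=G/T$ is the whole manifold). The theorem identifies $\mathcal F(\infdex_G^{0}([\beta]))(x)$ with the polynomial $I(x):=\int_{G/T}\beta(x)$. Using Atiyah--Bott--Berline--Vergne localization at the $T$-fixed points of $G/T$, which are indexed by the Weyl group $W$, each local contribution to $\int_{G/T} e(G/T)(x)$ is the ratio of the equivariant Euler class by itself, namely $1$, so the sum over $W$ yields $\int_{G/T}e(G/T)(x)=|W|$ as a constant polynomial in $x$. By the definition of $\beta$, we get $I(x)=(-1)^{\frac12\dim G/T}$ identically, and Fourier inversion gives
\[
\infdex_G^{0}([\beta]) \;=\; (-1)^{\frac12\dim G/T}\,\delta_0.
\]

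Finally, convolution with $\delta_0$ is the identity on $\mathcal D'(\mathfrak g^*)^G$, hence
\[
\infdex_G^{\nu_G}([\beta\wedge\alpha]) \;=\; (-1)^{\frac12\dim G/T}\,\infdex_G^{\nu_G}([\alpha]),
\]
as claimed. The main subtlety is checking that $I(x)$ has no higher-order terms in $x$; this relies on the localization identity above, or equivalently on the equivariant Gauss--Bonnet statement that the equivariant integral of the equivariant Euler class of a compact $G$-manifold equals its Euler characteristic as a constant polynomial. A secondary check is that the even-degree nature of $\beta$ (coming from $e(G/T)$) ensures no sign from commuting factors in the external product $\beta\wedge\alpha$; both checks are routine.
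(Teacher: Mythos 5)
Your proof is correct and follows essentially the same route as the paper: the paper's own proof is just ``Apply Corollary \ref{preinf}'', relying on the computation $\infdex_G^{0}([\beta])=(-1)^{\frac{1}{2}\dim G/T}\delta_0$ already carried out in the discussion preceding the Lemma (via Theorem \ref{theowit1} and the identity $\int_{G/T}e(G/T)=\chi(G/T)=|W|$). You merely spell out the same steps, including the Atiyah--Bott--Berline--Vergne verification that $\int_{G/T}\beta(x)$ is a constant polynomial, which the paper leaves implicit.
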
\label{beae}
\begin{proof}
Apply  Corollary \ref{preinf}.
\end{proof}

Under  the embedding  $\gamma:G/T\times M \hookrightarrow
\mathcal M$ of  action manifolds (cf. \ref{gthr}), by Theorem \ref{ilsh}, we
have now a homomorphism
$$\gamma_!:H_{G,c}^*(G/T\times M^0_G)\to H_{G,c}^*(\mathcal M^0_G)$$
preserving infdex.

We define  \begin{equation}
\label{ralfa}r([\alpha]):=
\gamma_!([\beta\wedge\alpha])\in H_{G,c}^*(\mathcal M^0_G).
\end{equation}

We then have, combining Lemma \ref{besae} with Theorem \ref{ilsh}
\begin{equation}
 {\rm infdex}_G^{\nu_G}([\alpha])={\rm infdex}_G^{\mu_G} (r[\alpha]).
\end{equation}
On the other hand, we have the isomorphism
$$j: H_{T,c}^*(
M^0_T)\to H_{G,c}^*(G \times_T M^0_T)$$ and we have shown in
Theorem \ref{indi} that
$$\infdex_{G}^{\mu_G} (j [\theta])=(-1)^{\frac{1}{2}\dim G/T}
{\rm Ind}^{ \mathfrak g^* }_{ \mathfrak
t^*}\infdex^{\nu_T}_{T}([\theta])$$ for any $[\theta]\in
H_{T,c}^*( M^0_T)$.

 We deduce
\begin{theorem}\label{maxtor} Take the commutative diagram
\begin{equation}
\begin{CD}
H_{G,c}^*(  M^0_G)@> r>> H_{G,c}^*(G \times_T
M^0_T)@>j^{-1}>>H_{T,c}^*(
M^0_T)\\@V\rm{infdex}VV@V\rm{infdex}VV@V\rm{infdex}VV\\\mathcal D'
(\mathfrak g^*)^G@>id>> \mathcal D' (\mathfrak g^*)^G@<{\rm Ind}^{ \mathfrak
g^* }_{ \mathfrak t^*}<<\mathcal D'(\mathfrak t^*).
\end{CD}
\end{equation}
The element $[\lambda]:=j^{-1}r([\alpha])\in  H_{T,c}^*( M^0_T)$ is such that
\begin{equation}
{\rm infdex}_G^{\nu_G}([\alpha])={\rm Ind}^{ \mathfrak g^* }_{ \mathfrak
t^*}\infdex^{\nu_T}_{T}( j^{-1}r([\alpha]) )
\end{equation}
\end{theorem}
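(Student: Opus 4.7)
The plan is to chain together two identities from the preceding subsections, evaluated on the class $[\lambda]:=j^{-1}r([\alpha])\in H_{T,c}^*(M_T^0)$.

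First, I would record the identity
$$\infdex_G^{\nu_G}([\alpha]) = \infdex_G^{\mu_G}(r[\alpha])$$
which has been assembled in the text just above the theorem. Its derivation has three ingredients: Theorem~\ref{ilsh} (the pushforward $\gamma_!$ preserves the infinitesimal index), the identification of the restriction of $\mu_G$ along $\gamma:G/T\times M\hookrightarrow\mathcal{M}$ with the pullback of $\nu_G$ (the lemma just preceding Lemma~\ref{besae}), and Lemma~\ref{besae} itself, which is obtained from Corollary~\ref{preinf} applied to the product class $\beta\wedge\alpha$, combined with the computation $\infdex_G^0([\beta])=\delta_0$ via Theorem~\ref{theowit1} and the normalization \eqref{beta} of $\beta$.

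Second, I would apply the induction formula of Theorem~\ref{laind}, specialized to $L=T$ and $[\theta]=[\lambda]$. Since by construction $j[\lambda]=r[\alpha]$, this gives
$$\infdex_G^{\mu_G}(r[\alpha]) = \mathrm{Ind}_{\mathfrak t^*}^{\g^*}\infdex_T^{\nu_T}([\lambda]),$$
once the sign $(-1)^{\dim(G/T)/2}=i^{\dim G-\dim T}$ appearing in Theorem~\ref{laind} is matched against the compensating normalization sign built into the definition of $\beta$, so that the two factors cancel. Combining the two identities then yields
$$\infdex_G^{\nu_G}([\alpha])=\mathrm{Ind}_{\mathfrak t^*}^{\g^*}\infdex_T^{\nu_T}(j^{-1}r([\alpha])),$$
which is the content of the theorem; the commutativity of the displayed diagram is a reformulation of this equality, the left square being the trivial identity of the bottom arrow and the right square being the induction formula.

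The main difficulty is not in this final packaging, which is essentially bookkeeping, but in the preparatory constructions: identifying $\mathcal{M}^0_G$ with $G\times_T M_T^0$ so that the isomorphism $j$ makes sense (Lemma~\ref{zefi}), verifying that the embedding $\gamma:G/T\times M\hookrightarrow\mathcal{M}$ is an embedding of action manifolds to which the $i_!$-construction of Section~\ref{push} applies, and choosing the normalization of $\beta$ so that the sign factors from Lemma~\ref{besae} and from Theorem~\ref{laind} cancel cleanly in the final formula.
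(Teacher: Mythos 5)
Your proposal matches the paper's argument essentially verbatim: the theorem is obtained by chaining the identity relating $\infdex_G^{\nu_G}([\alpha])$ to $\infdex_G^{\mu_G}(r[\alpha])$ (itself built from Theorem~\ref{ilsh}, Corollary~\ref{preinf}/Lemma~\ref{besae}, and the normalization of $\beta$ via~\eqref{beta}) with the induction formula of Theorem~\ref{laind} for $L=T$, and observing the cancellation of the two factors $(-1)^{\frac{1}{2}\dim G/T}=i^{\dim G-\dim T}$. One small point of bookkeeping: the sign coming from $\beta$ lives in the first chained identity (Lemma~\ref{besae} gives $\infdex_G^{\mu_G}(r[\alpha])=(-1)^{\frac{1}{2}\dim G/T}\infdex_G^{\nu_G}([\alpha])$, so the paper's displayed intermediate equality is itself slightly abbreviated), while the other sign comes from Theorem~\ref{laind}; your wording places both signs in the second step, but since you correctly note that the two cancel and land on the stated formula, this is only a presentational slip, not a gap.
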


\bigskip

Let us finally give an explicit  formula for the element
$[\lambda]=j^{-1}r([\alpha])\in  H_{T,c}^*( M^0_T)$ corresponding to
$[\alpha]\in H^*_{G,c}(M^0_G)$.

Let ${\rm Pf}(x)= \det_{\mathfrak t^{\perp}}^{1/2}(x)$ be the
Pfaffian associated to the action of $x\in \mathfrak t$ in the
oriented orthogonal space $\mathfrak t^{\perp}$.

We need the
\begin{proposition}
\label{resaun}The restriction of the form $\beta(x)$ at the point $e\in G/T$ is  the polynomial  $|W|^{-1}(2\pi)^{-{\frac{1}{2}\dim G/T}}{\rm Pf}(x)$.  \end{proposition}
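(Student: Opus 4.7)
The plan is to identify $\beta(x)|_e$ via an equivariant Chern--Weil computation at the fixed point $eT$.

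First, restriction to the single point $e \in G/T$ extracts only the $0$-form component of $\beta(x)$, so $\beta(x)|_e$ is a polynomial in $x \in \mathfrak g$ (and by $T$-invariance of the fixed point it is determined by its values on $\mathfrak t$). By the defining formula $[\beta] = (-1)^{\frac{1}{2}\dim G/T}\,e(G/T)/|W|$, it suffices to compute the $0$-form component of $e(G/T)(x)$ at $e$.

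I would realize $e(G/T)$ via the Chern--Weil construction of Definition \ref{ary} applied to the principal $T$-bundle $\pi\colon G \to G/T$, whose associated bundle $G\times_T \mathfrak t^\perp$ is the tangent bundle to $G/T$. As connection form I would take the $\mathfrak t$-valued projection of the left Maurer--Cartan form along the decomposition $\mathfrak g = \mathfrak t \oplus \mathfrak t^\perp$; this is a connection in the sense of the paper, and is invariant under the left $G$-action. The Euler form then corresponds to the invariant polynomial $\mathrm{Pf}_{\mathfrak t^\perp}/(2\pi)^{\frac{1}{2}\dim G/T}$ on $\mathfrak t$, which to $x \in \mathfrak t$ assigns the Pfaffian of the action of $x$ on the fiber $\mathfrak t^\perp$.

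The key computation is evaluating the equivariant curvature $R_x(e)$ on $x \in \mathfrak t$. By the connection condition $-\iota_x\omega = x$ one has $R_x(e) = x + R(e)$, where $R(e)$ is a $\mathfrak t$-valued $2$-form computable from the Maurer--Cartan equation. Applying the Pfaffian polynomial and then restricting to the point $e$ kills the form part $R(e)$, leaving only the scalar contribution
\[
e(G/T)(x)|_e = (2\pi)^{-\frac{1}{2}\dim G/T}\mathrm{Pf}(x),
\]
up to the sign conventions discussed below. Multiplying by the prefactor $(-1)^{\frac{1}{2}\dim G/T}/|W|$ yields the claimed identity.

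The main obstacle is sign bookkeeping: the paper's convention $v_x = \frac{d}{d\epsilon}\exp(-\epsilon x)\cdot m$ introduces a $(-1)^{\frac{1}{2}\dim G/T}$ when translating the linearization of $v_x$ at $e$ into the action of $\mathrm{ad}(x)$ on $\mathfrak t^\perp$, and this must be reconciled with the sign inside the square root in $\mathrm{Pf}(x) = \det^{1/2}_{\mathfrak t^\perp}(x)$ and with the explicit $(-1)^{\frac{1}{2}\dim G/T}$ prefactor in $[\beta]$. A clean consistency check, which I would use to fix the normalization unambiguously, is the Atiyah--Bott--Berline--Vergne localization formula applied to $\beta$: summing the contributions at the $|W|$ fixed points $wT$ (each the $W$-transform of $\beta(x)|_e$ divided by the equivariant Euler class of $T_{wT}(G/T)$) must equal $\int_{G/T}\beta = (-1)^{\frac{1}{2}\dim G/T}$, which forces exactly the stated coefficient $|W|^{-1}(2\pi)^{-\frac{1}{2}\dim G/T}$.
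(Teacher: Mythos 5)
Your plan is sound in outline and parallels the content of the paper's proof, but you leave the one piece of information the proposition actually asserts — the precise normalization constant — unresolved, and the mechanism you propose to fix it does not work. The paper's own proof is a one-liner: it cites the Mathai--Quillen / Berline--Getzler--Vergne computation (BGV Theorem 7.41) that the equivariant Euler class of $G/T$, restricted to the fixed point $e$, is the polynomial $(-2\pi)^{-\frac{1}{2}\dim G/T}\,\mathrm{Pf}(x)$, and then multiplies by the explicit prefactor $(-1)^{\frac{1}{2}\dim G/T}/|W|$ from \eqref{beta}; the two powers of $(-1)^{\frac{1}{2}\dim G/T}$ cancel to give the clean $(2\pi)^{-\frac{1}{2}\dim G/T}$. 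You instead re-derive the Euler class restriction by Chern--Weil on the bundle $G\to G/T$, which is a legitimate alternative, but your intermediate conclusion $e(G/T)(x)|_e = (2\pi)^{-\frac{1}{2}\dim G/T}\mathrm{Pf}(x)$ differs from the paper's cited formula by exactly the factor $(-1)^{\frac{1}{2}\dim G/T}$; taken at face value it would make the final answer come out wrong by that sign. You acknowledge this as ``sign bookkeeping,'' so the gap is flagged but not closed.

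The proposed fix via ABBV localization is circular and cannot close the gap. Since $\beta$ is by definition a known scalar multiple of $e(G/T)$, and the denominator in the ABBV fixed point formula at $wT$ is by definition the restriction of $e(G/T)$ to $wT$, the ratio $\beta(x)|_{wT}/e_{wT}(x)$ equals the scalar $(-1)^{\frac{1}{2}\dim G/T}/|W|$ identically, \emph{independently} of what explicit polynomial $e(G/T)(x)|_e$ is in terms of $\mathrm{Pf}(x)$. So the check $\sum_w \beta|_{wT}/e_{wT} = \int_{G/T}\beta = (-1)^{\frac{1}{2}\dim G/T}$ is satisfied for any normalization constant and constrains nothing. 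To actually pin down whether the restriction is $(2\pi)^{-k}\mathrm{Pf}(x)$ or $(-2\pi)^{-k}\mathrm{Pf}(x)$ (with $k = \frac{1}{2}\dim G/T$) you either need to carry through the Chern--Weil / Mathai--Quillen normalization honestly with the paper's sign convention $D = d - \iota_x$ and $v_x = \frac{d}{d\epsilon}\exp(-\epsilon x)\cdot m$, or do as the paper does and cite the reference.
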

\begin{proof}
By construction, the equivariant Euler class is the restriction to $G/T$ of the Thom class of the tangent bundle.  The fiber of the tangent bundle at the $T$ fixed point $e$ is isomorphic to $\mathfrak t^{\perp}$. Thus this class restricts at the fixed point $e$ as  $(-2\pi)^{-{\frac{1}{2}\dim G/T}}{\rm Pf}(x)$
(\cite{Mathai-Quillen}, see \cite{ber-get-ver}, Theorem 7.41, \cite{par-ver1}).
 \end{proof}

Recall the decomposition $\mathfrak g^*=\mathfrak t^*\oplus \mathfrak t^{\perp}$.
Let us consider the map $\nu_{\perp}:M\to
\mathfrak t^{\perp}$ which is uniquely defined by the identity
 $\nu_G=\nu_T\oplus \nu_{\perp}.$
Then $\nu_T^{-1}(0)\cap \nu_{\perp}^{-1}(0)=\nu_G^{-1}(0)$.

Denote by $\tau_0$ the $T$-equivariant Thom class  of the
 embedding $0\to \mathfrak t^{\perp}$, a compactly supported  equivariant class on $\mathfrak t^{\perp}$. Then
 $\tau_{\perp}:=\nu_{\perp}^*\tau_0$ is a closed equivariant class on
 $M$ supported on a small neighborhood $A$ of  $\nu_{\perp}^{-1}(0)$.
It follows that
\begin{lemma}
 If $[\alpha]\in H_{G,c}^*(M^0_G)$, we can choose $\tau_0$  so that the class
$\tau_{\perp}\wedge \alpha$ defines a class in $H_{T,c}^*(M^0_T)$.
\end{lemma}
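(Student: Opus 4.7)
The plan is to unwind the definition of $H^*_{T,c}(M^0_T)$ from Definition \ref{due}: we must show that, for a suitable choice of $\tau_0$, the equivariant form $\tau_\perp\wedge\alpha$ on $M$ (viewed as a $T$-equivariant form by restriction from $G$ to $T$) is compactly supported and that its equivariant differential $D(\tau_\perp\wedge\alpha)$ has support contained in $M\setminus M^0_T=M\setminus\nu_T^{-1}(0)$.

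First I would verify compact support: since $\alpha$ is a representative of a class in $H^*_{G,c}(M^0_G)$, it has compact support on $M$, hence so does $\tau_\perp\wedge\alpha$ regardless of the (possibly non-compact) support of $\tau_\perp$. Next, because $\tau_0$ is equivariantly closed on $\mathfrak t^\perp$ and $\nu_\perp$ is $T$-equivariant, the pullback $\tau_\perp=\nu_\perp^*\tau_0$ is a closed $T$-equivariant form on $M$. Hence, by the Leibniz rule,
\begin{equation*}
D(\tau_\perp\wedge\alpha)=(-1)^{|\tau_\perp|}\,\tau_\perp\wedge D\alpha,
\end{equation*}
and the support of this form is contained in $K\cap\operatorname{supp}(\tau_\perp)$, where $K:=\operatorname{supp}(D\alpha)$ is a compact subset of $M\setminus M^0_G$.

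The key step is to arrange that $K\cap\operatorname{supp}(\tau_\perp)\cap M^0_T=\emptyset$. For this I observe the following elementary fact: any point $m\in K\cap M^0_T$ satisfies $\nu_T(m)=0$ but (since $m\notin M^0_G$) $\nu_G(m)\ne 0$, so by the splitting $\nu_G=\nu_T\oplus\nu_\perp$ we must have $\nu_\perp(m)\ne 0$. Thus $K\cap M^0_T$ is a compact subset of $M$ disjoint from $\nu_\perp^{-1}(0)$, and the continuous function $\|\nu_\perp\|$ attains a positive minimum $\delta>0$ on it. Now I would pick $\tau_0$ to be a compactly supported $T$-equivariant Thom form whose support lies in the open ball of radius $\delta$ in $\mathfrak t^\perp$; this is possible because Thom classes can be represented by forms with arbitrarily small support around the zero section. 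For such a choice, $\operatorname{supp}(\tau_\perp)\subset\nu_\perp^{-1}(B_\delta)$, which is disjoint from $K\cap M^0_T$ by construction.

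The main (and essentially only) obstacle is precisely this last geometric argument: one has to be careful that the compactness of $K$ and the openness of the complement of $\nu_\perp^{-1}(0)$ genuinely allow one to separate $K\cap M^0_T$ from $\nu_\perp^{-1}(0)$ by a neighborhood of the form $\nu_\perp^{-1}(B_\delta)$. Once this is granted, combining the three ingredients (compact support, equivariantly closed on a neighborhood, and the geometric separation of $K\cap M^0_T$ from $\operatorname{supp}(\tau_\perp)$) shows that $\tau_\perp\wedge\alpha$ is a legitimate representative of a class in $H^*_{T,c}(M^0_T)$, as required.
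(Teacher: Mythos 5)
Your proof is correct and takes essentially the same approach as the paper: both identify $K=\operatorname{supp}(D\alpha)$, use that $\tau_\perp$ is closed to reduce to $\tau_\perp\wedge D\alpha$, observe that $K\cap M^0_T$ is disjoint from $\nu_\perp^{-1}(0)$ because $K\cap M^0_G=K\cap M^0_T\cap\nu_\perp^{-1}(0)=\emptyset$, and then shrink the support of $\tau_0$ so that $\operatorname{supp}(\tau_\perp)\cap K\cap M^0_T=\emptyset$. You merely supply the compactness-of-$K\cap M^0_T$ and positive-minimum-of-$\|\nu_\perp\|$ detail that the paper leaves implicit.
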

\begin{proof}  Let $K\subset M\setminus M^0_G$ be the support of $D\alpha$, then
$D(\tau_{\perp}\wedge \alpha)=\tau_{\perp}\wedge D \alpha$  is supported in  $A\cap K$.  Since $\emptyset= K\cap M^0_G=K\cap M^0_T \cap \nu_{\perp}^{-1}(0)$, we can choose $\tau_0$ so that $A\cap K\cap M^0_T=\emptyset$.
\end{proof}

By Remark \ref{formulaj}, an equivariant form representing $j^{-1}r([\alpha])$ is  the restriction to $M=\{(e,0,m), m\in M\}$ of $r(\alpha)(x)$, when $x\in \mathfrak t$. We still denote it by $j^{-1}(r(\alpha))(x)$.
\begin{theorem}\label{form:maxtor}
We can choose the Thom classes so that  $$j^{-1}(r(\alpha))(x)=|W|^{-1}(2\pi)^{-{\frac{1}{2}\dim G/T}}{\rm Pf}(x) \alpha(x)\wedge\tau_{\perp}(x).$$

\end{theorem}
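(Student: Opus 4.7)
The plan is to exploit that $\mathcal M\to G/T\times M$ is a $G$-equivariant vector bundle with fiber $\mathfrak t^\perp$ (Proposition \ref{vebP}) and to notice that the embedding $\gamma$ and the embedding $j\colon m\mapsto [e,\nu_T(m),m]$ used in Remark \ref{formulaj} are two \emph{different} sections of this bundle. This discrepancy is precisely what accounts for the appearance of $\tau_\perp$, rather than the equivariant Euler class, in the final formula.

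Using the explicit isomorphism $P\cong G\times M\times\mathfrak t^\perp$, $(g,\xi,m)\mapsto (g,m,\xi-\nu_T(m))$, from the proof of Proposition \ref{vebP}, the zero section of $q\colon\mathcal M\to G/T\times M$ becomes $s_0(gT,\tilde m)=[g,g^{-1}\tilde m,0]$, so that $j(m)=[e,m,0]=s_0(eT,m)$, while $\gamma(gT,m)=[g,\nu_G(g^{-1}m),g^{-1}m]$ becomes $[g,g^{-1}m,\nu_\perp(g^{-1}m)]$, i.e.\ the graph of a section defined by $\nu_\perp$. I then introduce the $G$-equivariant bundle automorphism $\phi\colon\mathcal M\to\mathcal M$, $\phi([g,m',\xi_\perp])=[g,m',\xi_\perp-\nu_\perp(m')]$. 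A direct check (using the $T$-equivariance of $\nu_\perp$, which follows from the $T$-stability of the splitting $\mathfrak g^*=\mathfrak t^*\oplus\mathfrak t^\perp$) shows that $\phi$ is well defined on the $T$-quotient, commutes with the $G$-action, and satisfies $q\circ\phi=q$ and $\phi\circ\gamma=s_0$.

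Next, I fix the Thom data. Let $\tau_0$ be an equivariant Thom form for $\{0\}\subset\mathfrak t^\perp$ and let $\tau^{s_0}$ be a $G$-equivariant Thom form of $\mathcal M\to G/T\times M$ concentrated near $s_0$, chosen so that its restriction over $\{eT\}\times M\cong M\times\mathfrak t^\perp$ equals $\pi_{\mathfrak t^\perp}^*\tau_0$ (possible since the restricted bundle is $T$-equivariantly trivial). Then $\tau_M:=(\phi^{-1})^*\tau^{s_0}$ is concentrated near $\gamma(G/T\times M)$ and serves as a Thom form for the $\gamma_!$ construction, so that $r([\alpha])=\gamma_!([\beta\wedge\alpha])$ is represented by $q^*(\beta\wedge\alpha)\wedge\tau_M$, where we use $q$ itself as the tubular neighborhood projection.

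Finally, I compute $j^*r(\alpha)=(q\circ j)^*(\beta\wedge\alpha)\wedge(\phi^{-1}\circ j)^*\tau^{s_0}$ factor by factor. Since $q\circ j(m)=(eT,m)$, Proposition \ref{resaun} gives $(q\circ j)^*(\beta\wedge\alpha)(x)=|W|^{-1}(2\pi)^{-\frac12\dim G/T}{\rm Pf}(x)\,\alpha(x)$ for $x\in\mathfrak t$. On the other hand $\phi^{-1}\circ j(m)=[e,m,\nu_\perp(m)]$ is the graph of $\nu_\perp$ in $M\times\mathfrak t^\perp$, and pulling $\pi_{\mathfrak t^\perp}^*\tau_0$ back along this graph yields $\nu_\perp^*\tau_0=\tau_\perp(x)$. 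Multiplying the two factors produces the claimed formula. I expect the main obstacle to be the bookkeeping around the two distinct sections $\gamma$ and $s_0$: restricting a Thom form for the zero section directly along $j$ would return the equivariant Euler class of the $\mathfrak t^\perp$-bundle, whereas the correct object is $\tau_\perp$, and it is exactly the automorphism $\phi$ that converts \emph{pullback along the zero section} into \emph{pullback along the graph of $\nu_\perp$}.
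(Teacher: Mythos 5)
Your proof is correct and follows essentially the same approach as the paper's: represent $r(\alpha)$ as $q^*(\beta\wedge\alpha)\wedge\tau$, restrict over $\{eT\}\times M$, read off the ${\rm Pf}(x)$ factor from Proposition \ref{resaun}, and observe that $j$ sits at fiber coordinate $\nu_\perp(m)$ relative to $\gamma$, so the Thom factor pulls back to $\nu_\perp^*\tau_0=\tau_\perp$. The automorphism $\phi$ you introduce simply makes explicit the re-centering that the paper performs implicitly by choosing the trivialization $(\xi,m)\mapsto(e,\xi+\nu_G(m),m)$ (centered on $\gamma$ rather than on the zero section $s_0$ of Proposition \ref{vebP}); this is the same observation, packaged more carefully.
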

\begin{proof}

  Let $\tau_{G/T\times M}$ be a Thom class of the bundle $q:\mathcal M\to   G/T\times M $ (Proposition \ref{vebP}).
 Then, by the $\gamma_!$ construction,  the associated equivariant  form on $\mathcal M$ which we denoted by $r(\alpha)$,   is $q^*(\beta\wedge \alpha)\wedge \tau_{G/T\times M}$.

 Now the bundle $q:\mathcal M \to G/T\times M$
 is trivial over $e\times M$ and isomorphic to $\mathfrak t^{\perp}\times M$ by $(\xi,m)\mapsto (e,\xi+\nu_{G}(m),m)$.

 The restriction of the Thom class  $\tau_{G/T\times M}$ gives a Thom class for this trivial bundle. We can then assume that the restriction of  $\tau_{G/T\times M}$ is $\tau_0(\xi)$.

 As $(e,M)$ is embedded by $\xi=\nu_{\perp}(m)$, we obtain our Theorem from Proposition \ref{resaun}.

\end{proof}

\appendix
\section{  Equivariant cohomology with compact supports}

\subsection{Compact supports} We are going to assume in this appendix  that all our spaces are locally compact and paracompact and we are going to work with Alexander-Spanier cohomology groups both ordinary and with compact supports, and with real coefficients. We shall denote them by   $H^*$ or, if we take compact supports, by $H_c^*$. $H^*$ is a cohomology theory  on spaces or pairs of spaces deduced from a functorial cochain complex $\mathcal C (X,Z) $ and $H_c^*$, the theory with compact supports, is associated to a natural subcomplex  $\mathcal C_c (X,Z) $, (see \cite{span} ch.6).

Let us now recall a
few   properties. The first is (see \cite{span} ch.6, p.321, Lemma 11.)
\begin{proposition}\label{numero3}
Let $(X,Z)$   be a pair with $X$   compact  $Z\neq
\emptyset$ closed. Set   $U:=X\setminus Z$. Then there are
natural isomorphism $H_c^q(U)\simeq H^q(X,Z).$
\end{proposition}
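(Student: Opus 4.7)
My plan is to produce the isomorphism $H_c^q(U)\simeq H^q(X,Z)$ by constructing a natural cochain map ``extension by zero'' and then comparing the two long exact sequences via the five lemma.

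First I would construct the comparison map $\phi\colon \mathcal C_c^q(U)\to \mathcal C^q(X,Z)$ at the cochain level. A compactly supported Alexander--Spanier $q$-cochain on $U$ is represented by a function on $U^{q+1}$ whose support (modulo germs near the diagonal) lies in $K^{q+1}$ for some compact $K\subset U$. Since $Z$ is closed in $X$ and $K$ is compact and disjoint from $Z$, there is an open neighborhood of $Z$ disjoint from $K$, so the extension by zero of the cochain to $X^{q+1}$ vanishes in a neighborhood of $Z^{q+1}$ and hence represents a class in $\mathcal C^q(X,Z)=\ker(\mathcal C^q(X)\to\mathcal C^q(Z))$. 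This extension commutes with the Alexander--Spanier differential, giving a chain map, hence a map $\phi\colon H_c^q(U)\to H^q(X,Z)$.

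Next I would place $\phi$ inside a ladder of long exact sequences. There is the standard long exact sequence of the pair,
\[
\cdots\to H^{q-1}(Z)\to H^q(X,Z)\to H^q(X)\to H^q(Z)\to\cdots,
\]
and the long exact sequence in compactly supported cohomology for the closed/open decomposition $Z\subset X\supset U$,
\[
\cdots\to H^{q-1}_c(Z)\to H^q_c(U)\to H^q_c(X)\to H^q_c(Z)\to\cdots.
\]
Because $X$ is compact, the natural maps give isomorphisms $H^*_c(X)\cong H^*(X)$, and because $Z$ is closed in $X$ hence compact, $H^*_c(Z)\cong H^*(Z)$. Using these identifications and $\phi$, one checks that the two sequences fit into a commuting ladder, where four out of every five vertical arrows are the canonical isomorphisms. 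The five lemma then gives that the remaining arrow $\phi$ is an isomorphism in every degree.

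The main obstacle is the bookkeeping needed to verify that the diagram commutes at the level of the connecting maps $H^q(Z)\to H^{q+1}(X,Z)$ and $H^q_c(Z)\to H^{q+1}_c(U)$. This is where the Alexander--Spanier germ formalism matters: one must choose compatible cochain representatives so that the coboundary constructed from a lift $\tilde c$ of a cochain on $Z$ to $X$, restricted to $U$, represents the class in $H^{q+1}_c(U)$ corresponding to the relative class in $H^{q+1}(X,Z)$. Once a coherent choice of lifts is made (which is possible because $X$ is paracompact and Alexander--Spanier cochains are defined as germs, not honest functions), commutativity is a routine diagram chase, and the rest of the argument is the five lemma.
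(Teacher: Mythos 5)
The paper does not prove this statement; it cites Spanier (ch.\ 6, p.\ 321, Lemma 11) and only records the chain map that induces the isomorphism, namely the composite $\mathcal C_c(U)\hookrightarrow \mathcal C_c(X)=\mathcal C(X)\twoheadrightarrow \mathcal C(X,Z)$. Your construction of $\phi$ via extension by zero is the same map, correctly described; so far so good. Where you diverge is in the method of proving $\phi$ is a quasi-isomorphism: you propose a ladder of long exact sequences plus the five lemma.

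The difficulty with this route, in the context of this paper, is that it is circular. The long exact sequence in compactly supported cohomology for the closed/open decomposition $Z\subset X\supset U$,
\[
\cdots\to H^{q-1}_c(Z)\to H^q_c(U)\to H^q_c(X)\to H^q_c(Z)\to\cdots,
\]
is precisely what the paper \emph{derives from} Proposition~\ref{numero3}, by applying it to the triple $(X,\tilde Z, Y)$ with $Y=X\setminus U$, $\tilde Z=Z\cup Y$ and invoking the snake lemma on the bottom row (this is the displayed diagram immediately following the proposition, leading to Proposition~\ref{numero231}). So you would need an independent derivation of that sequence in the Alexander--Spanier framework before the five lemma could be deployed, and you do not supply one. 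Beyond that, the step you flag as ``routine bookkeeping''---commutativity of the ladder at the connecting homomorphisms $H^q(Z)\to H^{q+1}(X,Z)$ versus $H^q_c(Z)\to H^{q+1}_c(U)$---is in fact where essentially all of the content of a five-lemma proof lives; a proof that defers it without at least exhibiting the compatible choice of representatives is not yet a proof. The direct cochain-level argument (as in Spanier, or as set up in the paper's triple diagram) avoids both issues and is the cleaner route here.
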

In fact this is induced by the      map
of cochains complexes
$\mathcal C_c(U)\to \mathcal C (X,Z)$
composition of the inclusions $\mathcal C_c(U)\to \mathcal C_c (X)\to  \mathcal C (X)$ and of the quotient
$ \mathcal C(X)\to \mathcal C (X,Z) $.

In particular, if we take an  open set  $U$ in a compact space $X$ (for example we could take the one point compactification $U^+$ of  a locally compact space $U$),  we get that
$H^*_c(U)=H^*(X,X\setminus U).$

As an application of this,  assume $Z\subset U$ is closed and   $U$ is open in a compact space $X$. Set $Y=X\setminus U$ and take the triple $(X ,\tilde Z,Y)$  with $\tilde Z=Z\cup Y$.
Consider the commutative    diagram
$$\begin{CD} 0@>>> \mathcal C^*_c(U\setminus Z)@>>>
 \mathcal C_c^* (U)@>>>\mathcal C^*_c(Z)\\ @.@VVV@VVV@VVV\\0@>>> \mathcal C^*(X ,\tilde Z )@>>>
 \mathcal C^* (X,Y)@>>>\mathcal C^*(\tilde Z ,Y)@>>>0 \end{CD}$$
 Using the exactness of the bottom line we deduce the long exact sequence
$$ \begin{CD}\cdots\to H^h_{c}(U\setminus Z)@>i_*>> H^h_{c}(U)@>j^*>> H^h_{c}(Z)@> >>  H^{h+1}_{c}(X\setminus Z)\to\cdots .\end{CD}$$
On the other hand, the top line induces a homomorphism of chain complexes $$\mu: \mathcal C_c^* (U)/ \mathcal C_c^* (U\setminus Z)\to  \mathcal C_c^* (Z)$$ and since the vertical arrows induce isomorphism in cohomology, using the five lemma we easily deduce
\begin{proposition}\label{numero231} The homomorphism $\mu$ induces an isomorphism in cohomology.
\end{proposition}
In order to compare  the Alexander--Spanier and singular cohomology, one needs  to pass to the associated sheaves (see \cite{span} ch.6, p.324).  Thus,  under suitable topological conditions, we obtain a natural isomorphism between Alexander--Spanier and singular cohomology.

In particular   consider   a $C^\infty$-manifold $M$ and  a closed subset $Z\subset M$. Further assume that $Z$ is locally contractible
 (this is the case  for $T^*_GN$ in $T^*N$, as follows from the description of the neighborhood of a $G$-orbit using the slice theorem).
We then have (see \cite{span} ch.6, p.341 Corollary 7) that, under these assumptions,  we can use singular cochains and in fact, in the case of a manifold, singular $C^{\infty}$ cochains to compute cohomology since Alexander Spanier and singular cohomology  are naturally isomorphic in this case.

Integrating on singular $C^\infty$-simplexes we get a commutative     diagram
$$\begin{CD} 0@>>> \mathcal A^*_c(M\setminus Z)@>>>
  \mathcal A_c^* (M) \\  @.@VVV @VVV\\0@>>> _\infty\mathcal C_c^*(M\setminus Z)@>>>
_\infty \mathcal C_c^* (M)  \end{CD}$$
 $\mathcal A^*_c$ being the complex of differential forms with compact supports.
 We deduce a homomorphism  of cochain complexes
 $$\nu: \mathcal A_c^* (M)/ \mathcal A_c^* (M\setminus Z)\to  _\infty\negthinspace\negthinspace\mathcal  C_c^*(M )/_\infty\mathcal  C_c^*(M\setminus Z)$$
 Since the vertical arrows induce isomorphism in cohomology, we  get a de Rham model for $H^*_c(Z)$.
 \begin{proposition}\label{numero43} The homomorphism  $\nu$ induces isomorphism in cohomology. In particular $H^*_c(Z)$ is naturally isomorphic to the cohomology of the complex $ \mathcal A_c^* (M)/ \mathcal A_c^* (M\setminus Z)$.
 \end{proposition}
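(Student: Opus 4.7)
The plan is to recognize $\nu$ as the third column of a morphism between short exact sequences of cochain complexes and apply the five lemma. First I would extend the diagram preceding the proposition to a morphism of short exact sequences by adjoining the quotients on the right, obtaining the tautological exact sequence
\begin{equation*}
0 \to \mathcal A_c^*(M\setminus Z) \to \mathcal A_c^*(M) \to \mathcal A_c^*(M)/\mathcal A_c^*(M\setminus Z) \to 0
\end{equation*}
on top and the analogous sequence for $C^\infty$ singular cochains with compact support on the bottom, with $\nu$ as the rightmost vertical map. Commutativity of the rightmost square is automatic since integration of a form supported in $M\setminus Z$ over a smooth simplex yields a cochain that also belongs to ${}_\infty\mathcal C_c^*(M\setminus Z)$.

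Second, I would invoke the compactly supported de Rham theorem applied to the $C^\infty$ manifolds $M$ and $M\setminus Z$ separately: in each case, integration of forms on smooth singular simplices induces an isomorphism $H^*(\mathcal A_c^*(\,\cdot\,)) \cong H^*({}_\infty\mathcal C_c^*(\,\cdot\,))$. Hence the first two vertical arrows in the diagram induce isomorphisms on cohomology. Passing to the long exact sequences of the two rows and applying the five lemma to the induced diagram, the third vertical arrow $\nu$ also induces an isomorphism on cohomology.

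Third, I would identify $H^*({}_\infty\mathcal C_c^*(M)/{}_\infty\mathcal C_c^*(M\setminus Z))$ with $H^*_c(Z)$. Using the standard chain equivalence between smooth and arbitrary singular cochains on a $C^\infty$ manifold, and the hypothesis that $Z$ is locally contractible to equate Alexander--Spanier and singular cohomology of $Z$ as recalled in \cite{span}, this cohomology agrees with $H^*(\mathcal C_c^*(M)/\mathcal C_c^*(M\setminus Z))$, which Proposition \ref{numero231} computes to be $H^*_c(Z)$. Chaining the two isomorphisms yields the natural isomorphism claimed by the proposition.

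The main obstacle I anticipate is the bookkeeping required to align the three cohomology theories (de Rham, smooth singular, Alexander--Spanier) with compact supports, in particular verifying that the various comparison maps respect the subcomplex of cochains supported in $M\setminus Z$ used in Proposition \ref{numero231}. Once the compactly supported de Rham theorem is available for both $M$ and the open submanifold $M\setminus Z$, the five lemma step itself is purely formal.
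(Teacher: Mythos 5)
Your proposal is correct and follows exactly the paper's intended argument: complete the rows to short exact sequences, apply the compactly supported de Rham theorem to $M$ and to the open submanifold $M\setminus Z$, run the five lemma on the induced long exact sequences to get that $\nu$ is a quasi-isomorphism, and then identify the cohomology of the smooth-singular quotient complex with $H^*_c(Z)$ via the smooth/singular/Alexander--Spanier comparisons and Proposition~\ref{numero231}. The paper is merely terser, leaving the five lemma step and the final identification implicit.
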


\subsection{Classifying spaces}

We now take a compact Lie group $G$ and denote by $ B_G $ its
classifying space (which is not locally compact). Recall that $
B_G $ is a polyhedron with finitely many cells in each dimension
and it has a filtration $(B_G)_0\subset \cdots \subset
(B_G)_n\subset (B_G)_{n+1} \subset \cdots \subset  B_G  $ by
compact manifolds with the property that the inclusion
$(B_G)_n\subset  B_G $
 induces isomorphism in  cohomology up to degree $n$.
 For example, if $G$ is a $s$-dimensional torus, $ B_G =\mathbb CP(\infty)^s$ and we may take
$(B_G)_n=\mathbb CP(n)^s$ (indeed in this case the inclusion
induces an isomorphism up to degree $2n-1$).

We denote by $\pi:E_G\to  B_G $ the universal fibration and set
$(E_G)_n=\pi^{-1}((B_G)_n)$. Thus $(E_G)_n$ is also a compact  $C^\infty$
manifold and a principal bundle over $(B_G)_n$.

 Recall now that for any $G$-space $Y$,
$H^*_G(Y)=H^*(Y\times_GE_G)$.

We can define the equivariant cohomology with compact supports
 of a   $G$-space as follows.
 Take $U$ locally compact. Embed  $U$ in his one point compactification $U^+$. The action of $G$ extends to $U^+$ and we   set
\begin{definition} $H^*_{G,c}(U)=H^*_{G}(  U^+,\infty)
.$
\end{definition}
Some remarks are in order.
\begin{itemize}
\item If $U$ is compact, then  $U^+$ is the disjoint union
$U\cup \{\infty\}$ so $H^*_{G,c}(U)=H^*_G(U)$. \item If $U$ is non
compact, then $H^*_{G,c}(U)=H^*(U^+\times_GE_G, B_G )$
where $ B_G =\{\infty\}\times_{G}E_G$.
\item
All the equivariant cohomologies are modules over $H^*_G(pt)$ and
all the homomorphisms are module homomorphisms.
%\item If
\end{itemize}
Recall that by  the properties of $(B_G)_m$  for any $h\geq 0$,
and for all  $m$ large enough,
$H^r(B_G, R)=H^r((B_G)_m, R)$ for $0\leq r\leq 2h$. So given a
$G$-space $X,$    the spectral sequences of the fibrations
$X\times_GE_G\to B_G$ and $X\times_G(E_G)_m\to (B_G)_m$ have the
same $E^{p,q}_r$ for all $r$ and $p+q\leq h$. In particular we get for any pair $(X,Z)$ of $G$-spaces that for
  large $m$, $H^h_G(X,Z)=H^h(X\times_G(E_G)_m,Z\times_G(E_G)_m)$. From   Proposition \ref{numero3}, we then deduce

\begin{proposition}\label{numero5} Let $X $   be a   $G$-space  with $X$   compact Hausdorff and  $Z\neq \emptyset$ a closed $G$-stable subspace. Set   $U:=X\setminus Z$. Then there is a natural
isomorphism $H_{G,c}^q(  U)\simeq H_G^q(X,Z).$

Furthermore for $m$ large with respect to $h$, $H_{G,c}^h(  U)\simeq H^h_c(U\times_G(E_G)_m).$

\end{proposition}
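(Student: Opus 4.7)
The plan is to reduce both claims to the non-equivariant compactly supported setting by passing to the finite-dimensional Borel approximations $(E_G)_m$ and applying Proposition \ref{numero3} on each side of the comparison. The key observation is that $X\times_G (E_G)_m$ and $U^+\times_G (E_G)_m$ are both compact Hausdorff, so Proposition \ref{numero3} applies directly.

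For the second isomorphism (the one with $m$ large), start from the definition $H^h_{G,c}(U)=H^h(U^+\times_G E_G,\, B_G)$. The spectral-sequence stabilization argument spelled out in the paragraph just before the proposition shows that, for $m$ large compared to $h$, the inclusion $(E_G)_m\hookrightarrow E_G$ induces an isomorphism
$$H^h(U^+\times_G E_G,\, B_G)\;\cong\; H^h\bigl(U^+\times_G (E_G)_m,\,(B_G)_m\bigr).$$
Now $U^+\times_G (E_G)_m$ is compact, $(B_G)_m=\{\infty\}\times_G (E_G)_m$ is closed in it, and the open complement is $U\times_G (E_G)_m$; so Proposition \ref{numero3} gives $H^h(U^+\times_G (E_G)_m,(B_G)_m)\cong H^h_c(U\times_G (E_G)_m)$, which is the second claim.

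For the first isomorphism, apply the same two-step recipe to the pair $(X,Z)$. For $m$ large compared to $q$, stabilization yields $H^q_G(X,Z)\cong H^q(X\times_G (E_G)_m,\, Z\times_G (E_G)_m)$, and then Proposition \ref{numero3} applied to the compact pair identifies this with $H^q_c(U\times_G (E_G)_m)$. Chaining this identification with the one of the previous paragraph produces the abstract isomorphism $H^q_{G,c}(U)\cong H^q_G(X,Z)$. Naturality is obtained from the $G$-equivariant collapse map $X\to U^+$ (which sends $Z$ to $\infty$ and is a homeomorphism on $U$): this map induces a commutative square in which both vertical arrows are the isomorphisms given by Proposition \ref{numero3}, and the bottom horizontal arrow is the identity of $H^q_c(U\times_G (E_G)_m)$, so the induced map at the top is the desired natural isomorphism.

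The main thing to be careful about is matching stabilization ranges. The argument has to pick $m$ large enough that the $E^{p,q}_r$ terms of both Serre spectral sequences (the one for $X\times_G E_G\to B_G$ and the one for $U^+\times_G E_G\to B_G$) stabilize up to total degree $q$, and this has to be done compatibly with the collapse map $X\to U^+$ so that the five-lemma identifies the two ensuing isomorphisms. Beyond that matter of bookkeeping, the proof is a straightforward assembly of Proposition \ref{numero3} with the Borel-construction filtration already set up in the appendix.
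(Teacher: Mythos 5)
Your proof follows the same route as the paper: stabilize the Borel construction in low degrees via the $(E_G)_m$ filtration to reduce to the compact finite-dimensional pairs $(X\times_G(E_G)_m, Z\times_G(E_G)_m)$ and $(U^+\times_G(E_G)_m, (B_G)_m)$, then apply Proposition~\ref{numero3} to each and chain the resulting isomorphisms through $H^h_c(U\times_G(E_G)_m)$. The paper is terser about the naturality via the collapse map $X\to U^+$, but the substance of the argument is identical.
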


Take now a $C^{\infty}$ manifold $M$ with a $C^{\infty}$ action of $G$  and a closed $G$-stable subset $Z$ in $M$ which we assume to be locally contractible. For instance if $Z$ is locally triangular as for instance when $Z$ is semi--analytic \cite{Los}. The same is true for $Z\times_G(E_G)_m$ for any $m$ so we can apply  Proposition \ref{numero43} and deduce that
for $m$ large with respect to $h$, $H_{G,c}^h(  Z)$ is the $h$-th cohomology group of the complex $ \mathcal A_c^* (M\times_G(E_G)_m)/ \mathcal A_c^* ((M\setminus Z)\times_G(E_G)_m)$.

But one knows (see \cite{gui-ste99}) that for any $m$   we have a natural morphism of complexes ${\mathcal A}_{G,c}(M)\to \mathcal A_c^* (M\times_G(E_G)_m)$ which induces isomorphism in cohomology in small degree. The same holds also for the open set $M\setminus Z$ so that we get a commutative diagram
$$\begin{CD} 0@>>> {\mathcal A}_{G,c}(M\setminus Z)@>>>
 {\mathcal A}_{G,c}(M) \\  @.@VVV @VVV\\0@>>> \mathcal A_c^*((M\setminus Z)\times_G(E_G)_m))@>>>
\mathcal A_c^* (M\times_G(E_G)_m))  \end{CD}$$
which induces a morphism of complexes
$$\rho:{\mathcal A}_{G,c}(M)/{\mathcal A}_{G,c}(M\setminus Z)\to \mathcal A_c^* (M\times_G(E_G)_m)/ \mathcal A_c^* ((M\setminus Z)\times_G(E_G)_m)$$
From this we immediately deduce

  \begin{proposition}\label{final} $H^*_{G,c}(Z)$ equals the cohomology of the complex ${\mathcal A}^*_{G,c}(Z,M)=
  \mathcal A_c^* (M)/ \mathcal A_c^* (M\setminus Z)$.
 \end{proposition}
\begin{proof} From the above considerations we have,   if  $m$ is large with respect to $h$, $\rho$ induces an isomorphism in cohomology in degree $h$. Since we have seen that   in degree $h$ the cohomology of the complex $ \mathcal A_c^* (M\times_G(E_G)_m)/ \mathcal A_c^* ((M\setminus Z)\times_G(E_G)_m)$ is $H_{G,c}^h(  Z)$, everything follows.
\end{proof}

\end{document}